\newcommand{\newsection}[1]{\setcounter{equation}{0} \section{#1}}
\newcommand{\bea}{\begin{eqnarray}}
\newcommand{\eea}{\end{eqnarray}}
\newcommand{\vp}{\varphi}
\newcommand{\clb}{\mathcal{B}}
\newcommand{\cld}{\mathcal{D}}
\newcommand{\clh}{\mathcal{H}}
\newcommand{\clm}{\mathcal{M}}
\newcommand{\clq}{\mathcal{Q}}
\newcommand{\cls}{\mathcal{S}}
\newcommand{\D}{\mathbb{D}}
\newcommand{\raro}{\rightarrow}
\def\textmatrix#1&#2\\#3&#4\\{\bigl({#1 \atop #3}\ {#2 \atop #4}\bigr)}
\def\dispmatrix#1&#2\\#3&#4\\{\left({#1 \atop #3}\ {#2 \atop #4}\right)}
\newcommand{\be}{\begin{equation}}
\newcommand{\ee}{\end{equation}}
\newcommand{\ben}{\begin{eqnarray*}}
\newcommand{\een}{\end{eqnarray*}}
\newcommand{\bi}{\begin{itemize}}
\newcommand{\ei}{\end{itemize}}
\newcommand\la{{\langle }}
\newcommand\ra{{\rangle}}
\theoremstyle{definition}
\theoremstyle{plain}
\newtheorem{thm}{Theorem}[section]
\newtheorem{cor}[thm]{Corollary}
\newtheorem{lem}[thm]{Lemma}
\newtheorem{prop}[thm]{Proposition}
\theoremstyle{definition}
\newtheorem{defn}[thm]{Definition}
\newtheorem{rem}[thm]{Remark}
\newtheorem{ex}[thm]{Example}
\numberwithin{equation}{section}
\let\phi=\varphi
\begin{document}

\title[Submodules of  $H^2(\D^n)$]{Two problems on submodules of  $H^2(\D^n)$}

\author[Debnath]{Ramlal Debnath}
\address{KTH Royal Institute of Technology, Stockholm.}
\email{ramlaldebnath@gmail.com, ramlal@kth.se}

\author[Sarkar] {Srijan Sarkar }
\address{Indian Institute of Science, Bangalore}
\email{srijans@iisc.ac.in, srijansarkar@gmail.com}

\subjclass[2020]{47A13, 47A15, 47A20, 32A10,  30J05, 30J10, 30H05, 30H10.}

\keywords{Hilbert modules, invariant subspaces, orthogonal projections, inner functions, Beurling submodules, model spaces, Hardy space, polydisc}

\begin{abstract}
Given any shift-invariant closed subspace $\mathcal{S}$ (aka submodule) of the Hardy space over the unit polydisc $H^2(\mathbb{D}^n)$ (where $n \geq 2$), let $R_{z_j}:=M_{z_j}|_{\mathcal{S}}$, and $E_{z_j}:=P_{\mathcal{S}}\circ ev_{z_j}$, for each $j \in \{1,\ldots,n\}$. Here, $ev_{z_j}$ is the operator evaluating at $0$ in the $z_j$-th variable.  In this article, we prove that given any subset $\Lambda \subseteq \{1,\ldots,n\}$, there exists a collection of one-variable inner functions $\{\phi_\lambda (z_\lambda)\}_{\lambda \in \Lambda}$ on $\mathbb{D}^n$,  such that
\[
\mathcal{S}  =  \sum_{\lambda \in \Lambda} \phi_\lambda (z_\lambda)H^2(\mathbb{D}^n),
\]
if and only if the conditions $
(I_{\mathcal{S}}-E_{z_k}E_{z_k}^*)(I_{\mathcal{S}}-R_{z_k}R_{z_k}^*)=0$ for all $k \in \{1,\dots,n\} \setminus \Lambda$, and 
$(I_{\mathcal{S}}-E_{z_{i}}E_{z_{i}}^*)(I_{\mathcal{S}}-R_{z_{i}}R_{z_{i}}^*)(I_{\mathcal{S}}-E_{z_{j}}E_{z_{j}}^*)(I_{\mathcal{S}}-R_{z_{j}}R_{z_{j}}^*)=0
$ for all distinct $i,j \in \Lambda$, are satisfied. Following this, we study R.G. Douglas's question on the commutativity of orthogonal projections onto the corresponding quotient modules.
\end{abstract}

\maketitle

\tableofcontents

\newsection{Introduction}\label{sec: 1}
A landmark result in the theory of Hardy spaces is Beurling's theorem \cite{Beurling} for invariant subspaces of the Hardy space on unit disc $\D$  (denoted by $H^2(\D)$). In particular, it states that any shift-invariant closed subspace of $H^2(\D)$ is of the form $\theta(z) H^2(\D)$, where $\theta(z)$ is an \textit{inner} function on $\D$. Note that a bounded analytic function $\theta(\bm{z})$ on $\D^n$ is said to be an inner function if $|\theta(\bm{z})| = 1$ almost everywhere on $\mathbb{T}^n$.  It is well-known that a clever example by Rudin \cite{Rud} shows that such a characterization is far from being true in the case of $\D^2$. 
This initiated the search for new types of shift-invariant subspaces.  Before moving into further details, let us first recall that $H^2(\D^n)$ is the space of all analytic functions $f(\bm{z})$ on the unit polydisc $\D^n$ such that 
\[
\|f\|: = \sup_{0 \leq r<1} \big( \int_{\mathbb{T}^n} |f(r \bm{z})|^2 d\mu \big)^{\frac{1}{2}} < \infty,
\]
where $\bm{z}:= (z_1,\ldots,z_n) \in \D^n$, and $\mu$ is the normalized Lebesgue measure on the distinguished boundary of $\D^n$, that is the $n$-torus $\mathbb{T}^n$. The algebra of bounded analytic functions on $\D^n$ is denoted by $H^{\infty}(\D^n)$.  The Hardy space $H^2(\D^n)$ has the following module structure over $\mathbb{C}[z_1,\ldots,z_n]$,
\[
p \cdot f  = p(M_{\bm{z}})f \quad (f \in H^2(\D^n)),
\]
where $M_{\bm{z}}:= (M_{z_1}, \ldots, M_{z_n})$ is the tuple of shift operators on $H^2(\D^n)$. A closed subspace $\cls \subseteq H^2(\D^n)$ is called $M_{\bm{z}}$-invariant (also known as shift-invariant) if $M_{z_i} \cls \subseteq \cls$ is satisfied for all $i \in \{1,\ldots,n\}$.  The above module structure gets carried onto shift-invariant subspaces via the restriction operators: $$R_{z_i}: = M_{z_i}|_{\cls} \quad (i \in \{1,\ldots,n\}).$$ Due to this, we will follow the convention of Douglas and Paulsen \cite{DP} by referring to shift-invariant closed subspaces as \textit{submodules} of $H^2(\D^n)$. The classification problem for submodules of $H^2(\D^n)$ for $n \geq 2$ is challenging, and a full description is beyond the scope of the present understanding. We refer the readers to important developments in this direction \cite{ACD, AC, BDDS, Guo, GW, GGK, III, III2, Yang3, Yang2, Yang1}. To study this problem, researchers have devised an important approach over time: studying submodules through the lens of restriction operators. More precisely, characterizing shift-invariant subspaces of $H^2(\D^n)$ by imposing algebraic conditions on the restriction operators. A breakthrough in this direction came via the characterization for \textit{Beurling-type} submodules of $H^2(\D^n)$.

\begin{defn}
A submodule $\cls \subseteq H^2(\D^n)$ is said to be Beurling-type if there exists an inner function $\theta(\bm{z}) \in H^{\infty}(\D^n)$ such that $\cls = \theta(\bm{z}) H^2(\D^n)$.
\end{defn}

The following characterization was obtained by Mandrekar for $\D^2$ \cite{Mandrekar} and later by Sarkar et al. for $n >2$ \cite{SSW}.
\begin{thm}\label{Beurling}
Let $\cls$ be a submodule of $H^2(\D^n)$, then the following are equivalent.
\begin{enumerate}
\item[(i)] $\cls$ is of Beurling-type,
\item[(ii)] $[R_{z_i}^*,R_{z_j}] = 0$ for any distinct $i,j \in \{1,\ldots,n\}$.
\end{enumerate}
\end{thm}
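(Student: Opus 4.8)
The plan is to prove the two implications separately: the forward direction $(i)\Rightarrow(ii)$ is a short computation, and $(ii)\Rightarrow(i)$ carries the real content. For $(i)\Rightarrow(ii)$, suppose $\cls=\theta(\bm z)H^2(\D^n)$ for an inner $\theta$. Then $M_\theta$ is an isometry of $H^2(\D^n)$ onto $\cls$, and since multiplication operators commute we have $R_{z_j}M_\theta=M_\theta M_{z_j}$. Taking adjoints gives $M_\theta^* R_{z_j}^*=M_{z_j}^* M_\theta^*$; multiplying on the left by $M_\theta$, using $M_\theta M_\theta^*=P_{\cls}$ and $M_\theta^*M_\theta=I$, and noting that $R_{z_j}^*$ preserves $\cls$, one obtains $R_{z_j}=M_\theta M_{z_j}M_\theta^*$ and $R_{z_j}^*=M_\theta M_{z_j}^* M_\theta^*$ as operators on $\cls$. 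Hence $R_{z_i}^*R_{z_j}=M_\theta M_{z_i}^*M_{z_j}M_\theta^*$ and $R_{z_j}R_{z_i}^*=M_\theta M_{z_j}M_{z_i}^*M_\theta^*$. Because the coordinate shifts on $H^2(\D^n)$ are doubly commuting, i.e. $M_{z_i}^*M_{z_j}=M_{z_j}M_{z_i}^*$ for $i\neq j$, these two expressions coincide, which is exactly $[R_{z_i}^*,R_{z_j}]=0$.

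For $(ii)\Rightarrow(i)$, I would first record the structural facts supplied by the hypothesis. Each $R_{z_j}$ is an isometry on $\cls$, and it is \emph{pure}: $\bigcap_{m\ge 0}R_{z_j}^m\cls\subseteq\bigcap_{m\ge 0}z_j^mH^2(\D^n)=\{0\}$. The assumption $[R_{z_i}^*,R_{z_j}]=0$ says the tuple $(R_{z_1},\dots,R_{z_n})$ is doubly commuting, and a direct manipulation of the cross relations $R_{z_i}^*R_{z_j}=R_{z_j}R_{z_i}^*$ and their adjoints shows the defect projections $I_{\cls}-R_{z_j}R_{z_j}^*$ commute pairwise. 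I would then invoke the Wold--S{\l}oci\'nski decomposition for a doubly commuting tuple of pure isometries to obtain
\[
\cls=\bigoplus_{\bm k\in\Z_+^n}R_{\bm z}^{\bm k}\,\clw,\qquad
\clw=\bigcap_{j=1}^n\ker R_{z_j}^*=\mathrm{ran}\Big(\textstyle\prod_{j=1}^n\big(I_{\cls}-R_{z_j}R_{z_j}^*\big)\Big),
\]
where $R_{\bm z}^{\bm k}=R_{z_1}^{k_1}\cdots R_{z_n}^{k_n}$ acts as multiplication by $\bm z^{\bm k}$. The mutual orthogonality of the summands $R_{\bm z}^{\bm k}\clw$ is precisely the information that double commutativity provides, and it is what fails for a generic (non-Beurling) submodule such as Rudin's example.

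The decisive remaining step is to show $\dim\clw=1$ and to upgrade the generating vector to a scalar inner function. Given orthonormal $\theta_1,\theta_2\in\clw$, the decomposition makes $\{\bm z^{\bm k}\theta_i:\bm k\in\Z_+^n,\ i\in\{1,2\}\}$ an orthonormal family, and evaluating $\langle \bm z^{\bm k}\theta_1,\bm z^{\bm l}\theta_2\rangle=\int_{\T^n}\bm z^{\bm k-\bm l}\theta_1\overline{\theta_2}\,d\mu$ shows that every Fourier coefficient of $\theta_1\overline{\theta_2}\in L^1(\T^n)$ vanishes, so $\theta_1\overline{\theta_2}=0$ a.e.; since a nonzero function in $H^2(\D^n)$ has nonvanishing boundary values almost everywhere, this is impossible, forcing $\dim\clw\le 1$. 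As $\cls\neq\{0\}$ and the $R_{z_j}$ are pure, we get $\dim\clw=1$, say $\clw=\mathbb{C}\theta$ with $\|\theta\|=1$. The same Fourier computation applied to $\langle\bm z^{\bm k}\theta,\bm z^{\bm l}\theta\rangle=\delta_{\bm k,\bm l}$ shows the Fourier coefficients of $|\theta|^2$ equal $\delta_{\bm m,\bm 0}$, whence $|\theta|^2=1$ a.e. on $\T^n$ and $\theta$ is inner; the map $\bm z^{\bm k}\mapsto\bm z^{\bm k}\theta$ then extends to a unitary identifying $\cls$ with $\theta H^2(\D^n)$. I expect the main obstacle to be the second paragraph, namely establishing (or carefully citing) the $n$-variable Wold--S{\l}oci\'nski orthogonal decomposition from pairwise commutativity of the defect projections; once that structural decomposition is secured, the passage to a single scalar inner function through boundary values is the clean part.
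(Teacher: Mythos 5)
The paper never proves Theorem \ref{Beurling}: it is stated as background and attributed to Mandrekar \cite{Mandrekar} (for $n=2$) and Sarkar--Sasane--Wick \cite{SSW} (for $n>2$), so there is no internal proof to compare yours against. Judged on its own terms, your argument is correct, and it is essentially a reconstruction of the proof in those references rather than a different route. The forward direction, via $R_{z_j}=M_\theta M_{z_j}M_\theta^*$, $R_{z_j}^*=M_\theta M_{z_j}^*M_\theta^*$ and double commutativity of the coordinate shifts, is a complete computation. The converse correctly isolates the two external ingredients it needs: first, the Wold--S{\l}oci\'nski decomposition for an $n$-tuple of doubly commuting pure isometries (S{\l}oci\'nski's theorem is usually stated for pairs, so for general $n$ you must either run an induction over the variables or cite the $n$-variable version, which is exactly the tool used in \cite{SSW}); second, Rudin's theorem \cite{Rud} that a nonzero function in $H^2(\D^n)$ cannot have boundary values vanishing on a subset of $\T^n$ of positive measure, which is what justifies the step ``$\theta_1\overline{\theta_2}=0$ a.e.\ is impossible.'' Granting these, your Fourier-coefficient argument that the joint wandering subspace $\clw=\bigcap_{j}\ker R_{z_j}^*$ is one-dimensional and spanned by a function that is inner, together with the identification $\cls=\overline{\operatorname{span}}\{\bm{z}^{\bm{k}}\theta\}=\theta H^2(\D^n)$, is sound. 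One pedantic caveat: you need $\cls\neq\{0\}$ to conclude $\dim\clw=1$; the zero submodule satisfies (ii) vacuously but is not of Beurling type, so the theorem (in the paper as well as in your proof) tacitly concerns nonzero submodules.
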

Here, $[\cdot,\cdot]$ denotes the commutator $[A, B]:= AB - BA$ for $A, B \in \clb(\clh)$ (the space of all bounded operators on a Hilbert space $\clh$). We refer the reader to a recent work by Bergqvist \cite{Bergqvist} for a function-theoretic approach to Mandrekar's characterization.  An immediate question that one may ask is when the inner function $\theta(\bm{z})$ depends on a particular variable. Although this question is natural and interesting in its own right,  it is surprising that a single set of explicit conditions has not yet been developed to characterize Beurling-type submodules with this particular feature.  This is the first achievement of this article. To understand the result, let us introduce another collection of operators associated with the submodule $\cls$. More precisely, for each $j \in \{1,\ldots,n\}$ and $\lambda \in \D$, we define evaluation operators corresponding to $\cls$ as the following,
\[
E_{j,\lambda} := P_{\cls} \circ ev_{j, \lambda},
\]
where, for any $j \in \{1,\ldots,n\}$, and $\lambda \in \D$,  $ev_{j, \lambda}: H^2(\D^n)\raro H^2(\D^n)$ is the evaluation operator on the full space defined by
\[
ev_{j, \lambda}f (z_1, \ldots, z_n)=f(z_1,\ldots,z_{j-1},\lambda,z_{j+1},\ldots, z_n) \quad  (f\in H^2(\D^n)).
\]
Here, $P_{\cls}$ denotes the orthogonal projection onto the closed subspace $\cls$. We denote the orthogonal complement of $\cls$ by $\cls^{\perp}$, and the orthogonal projection onto this subspace by $P_{\cls}^{\perp}$. For this article, we are interested in evaluation operators at $\lambda = 0$, which is denoted by
\[
E_j: = E_{j,0}.
\]
It should be noted that evaluation operators corresponding to quotient modules have been studied in \cite{LYY, Yang2}, where several correspondences with the compression operators have been developed. Our interest lies in the study of these operators on submodules. In section \ref{INS2}, we use the collection of evaluation operators and restrictions to completely characterize Beurling-type submodules corresponding to inner functions depending on a particular variable.
\begin{thm}\label{Beurling-type}
Let $\cls$ be a submodule of $H^2(\D^n)$. Then $\cls=\theta(z_i)H^2(\D^n)$ for an inner function $\theta(z_i)$ depending only on the $z_i$-variable if and only if 
\[
(I_{\cls}-E_{z_j}E_{z_j}^*)(I_{\cls}-R_{z_j}R_{z_j}^*)=0.
\]
for all $j \in \{1,\ldots,n\} \setminus \{i\}$.
\end{thm}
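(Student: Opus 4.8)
The plan is to convert each operator expression into a geometric statement about $\cls$ and then induct on $n$. Three identifications drive everything. First, $ev_{z_j}=I-M_{z_j}M_{z_j}^*$, so $ev_{z_j}$ is the orthogonal projection of $H^2(\D^n)$ onto $\clh_j:=\{f\in H^2(\D^n): f \text{ is independent of } z_j\}$. Second, $R_{z_j}$ is an isometry on $\cls$ with range $z_j\cls$, so $I_{\cls}-R_{z_j}R_{z_j}^*$ is the orthogonal projection of $\cls$ onto the wandering subspace $\clw_j:=\cls\ominus z_j\cls$. Third, since $ev_{z_j}$ is a self-adjoint idempotent, $E_{z_j}E_{z_j}^*=P_{\cls}\,ev_{z_j}\,P_{\cls}$, which on $\cls$ is the positive contraction $P_{\cls}P_{\clh_j}|_{\cls}$. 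Consequently the hypothesis $(I_{\cls}-E_{z_j}E_{z_j}^*)(I_{\cls}-R_{z_j}R_{z_j}^*)=0$ says exactly that $E_{z_j}E_{z_j}^*$ fixes every vector of $\clw_j$.

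For the forward direction, suppose $\cls=\theta(z_i)H^2(\D^n)$ with $\theta$ inner. For $j\neq i$ the function $\theta$ is independent of $z_j$, so the unitary $M_\theta:H^2(\D^n)\to\cls$ intertwines both $M_{z_j}$ and $M_{z_j}^*$ with themselves. A short computation then yields both $I_{\cls}-R_{z_j}R_{z_j}^*=M_\theta\,ev_{z_j}\,M_\theta^*$ and $E_{z_j}E_{z_j}^*=M_\theta\,ev_{z_j}\,M_\theta^*$, so $I_{\cls}-E_{z_j}E_{z_j}^*=R_{z_j}R_{z_j}^*$; as $R_{z_j}R_{z_j}^*$ is a projection, $(I_{\cls}-E_{z_j}E_{z_j}^*)(I_{\cls}-R_{z_j}R_{z_j}^*)=R_{z_j}R_{z_j}^*(I_{\cls}-R_{z_j}R_{z_j}^*)=0$.

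The substance is the converse, which I would prove by induction on $n$ (taking $i=1$ without loss of generality; the base case $n=1$ is classical Beurling). First I record the reformulation of the hypothesis: if $E_{z_j}E_{z_j}^*f=f$ for $f\in\clw_j$, then $\|f\|=\|P_{\cls}P_{\clh_j}f\|\le\|P_{\clh_j}f\|\le\|f\|$ forces $\|P_{\clh_j}f\|=\|f\|$, i.e. $f\in\clh_j$; hence the hypothesis is equivalent to $\clw_j\subseteq\clh_j$ for every $j\neq 1$. For the inductive step I peel off the last variable: $R_{z_n}$ is a pure isometry on $\cls$ because $\bigcap_k z_n^k H^2(\D^n)=\{0\}$, so the Wold decomposition gives $\cls=\bigoplus_{k\ge 0}z_n^k\clw_n$ with $\clw_n\subseteq\clh_n$; using $\clw_n\subseteq\clh_n$ and orthogonality of distinct $z_n$-degrees one checks that $\clw_n$ is invariant under $M_{z_1},\dots,M_{z_{n-1}}$, so it is a submodule of $H^2(\D^{n-1})$.

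The decisive step—and the one I expect to be the main obstacle—is showing that the reformulated hypotheses descend to $\clw_n$, namely that for each $j\in\{2,\dots,n-1\}$ the $z_j$-wandering subspace $\clw_n\ominus z_j\clw_n$ lies in $\clh_j$. I would prove the sharper inclusion $\clw_n\ominus z_j\clw_n\subseteq\clw_j$: writing $z_j\cls=\bigoplus_{k\ge 0}z_n^k(z_j\clw_n)$, any $g\in\clw_n\ominus z_j\clw_n$ is orthogonal to the $k=0$ summand by definition and to the summands $k\ge 1$ because these lie in $z_n\cls$ while $g\in\clw_n\perp z_n\cls$; hence $g\perp z_j\cls$, i.e. $g\in\clw_j\subseteq\clh_j$. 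Thus $\clw_n$ satisfies the same hypotheses in $H^2(\D^{n-1})$ with distinguished variable $z_1$, so by induction $\clw_n=\theta(z_1)H^2(\D^{n-1})$ for an inner $\theta(z_1)$, and reassembling yields $\cls=\bigoplus_{k\ge 0}z_n^k\theta(z_1)H^2(\D^{n-1})=\theta(z_1)H^2(\D^n)$, as desired.
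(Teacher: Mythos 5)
Your proposal is correct, and it takes a genuinely different route from the paper's proof. The paper stays entirely within its operator-identity framework: from $(I_{\cls}-E_{z_j}E_{z_j}^*)(I_{\cls}-R_{z_j}R_{z_j}^*)=0$ it deduces $P_{\cls}M_{z_j}P_{\clq}=0$, hence that $\cls$ is $M_{z_j}$-reducing for every $j\neq i$; reducing-ness then forces $[R_{z_k}^*,R_{z_j}]=0$ for all distinct pairs, so the doubly-commuting characterization (Theorem \ref{Beurling}) gives $\cls=\theta(\bm{z})H^2(\D^n)$ for some $n$-variable inner $\theta$, and a Szeg\"o-kernel computation using the reducing property shows $\theta$ is independent of each $z_j$, $j\neq i$. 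You instead convert the hypothesis into geometry---$I_{\cls}-R_{z_j}R_{z_j}^*$ is the projection of $\cls$ onto $\clw_j=\cls\ominus z_j\cls$, and the condition is equivalent to the inclusion $\clw_j\subseteq\clh_j$---and then induct on $n$: the Wold decomposition $\cls=\oplus_{k\ge 0}z_n^k\clw_n$ with $\clw_n$ a submodule of $H^2(\D^{n-1})$, your descent lemma $\clw_n\ominus z_j\clw_n\subseteq\clw_j$ (the real content of your argument, and proved correctly by splitting $z_j\cls=\oplus_{k\ge0}z_n^k z_j\clw_n$), and classical one-variable Beurling as the base case. The trade-off: your proof is self-contained, using only Beurling and Wold, and in effect reproves the special case of Theorem \ref{Beurling} that is needed; the paper's proof is shorter because it cites that theorem, and it isolates the $M_{z_j}$-reducing property of $\cls$, which the paper reuses later (e.g., in the proof of Theorem \ref{inner based submodule}). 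One caveat shared by both arguments: they implicitly assume $\cls\neq\{0\}$, since the zero submodule satisfies the hypotheses vacuously but is not of the form $\theta(z_i)H^2(\D^n)$.
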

The above result is surprising as we need conditions corresponding to all the variables except for the variable on which the inner function depends. It is interesting to observe that if we just assume the condition $(I_{\cls}-R_{z_j}R_{z_j}^*)=0$ for any $i \in \{1,\ldots,n\}$ then the only possibility is $\cls = \{0\}$.  This is because any $R_{z_i}$ is also a pure isometry (in other words, $\|R_{z_i}^{*n}f\| \raro 0$ as $n \raro \infty$ for all $f \in H^2(\D^n)$). Furthermore, in section \ref{Douglas}, we find that there does not exist any non-zero submodule of $H^2(\D^n)$ that satisfies the following condition
\[
(I_{\cls}-R_{z_i}R_{z_i}^*)(I_{\cls}-R_{z_j}R_{z_j}^*) = 0,
\]
for all distinct $i,j \in \{1,\ldots,n\}$ (see Proposition \ref{zerocondn}).   We believe this is an important phenomenon which, in other words, states that: given any non-zero submodule there must exist distinct $i,j \in \{1,\ldots,n\}$ such that $\cls \ominus z_i \cls \not \perp \cls \ominus z_j \cls$. 

Theorem \ref{Beurling-type} motivates us to characterize submodules which are sums of one-variable inner functions. More precisely, given a subset $\Lambda \subseteq \{1,\ldots,n\}$, when a submodule $\cls \subseteq H^2(\D^n)$ can be written as the following:
\[
\cls = \sum_{\lambda \in \Lambda} \vp_{\lambda}(z_{\lambda}) H^2(\D^n),
\]
where $\vp_{\lambda}(z_{\lambda})$ is an inner function depending only on the $z_{\lambda}$-variable for all $\lambda \in \Lambda$? In section \ref{INS3}, we completely answer this question by proving the following result.
\begin{thm}\label{inner based submodule}
Let $\cls$ be a proper submodule of $H^2(\D^n)$ and $\Lambda \subseteq \{1,\ldots,n\}$. There exists a collection of one-variable non-constant inner functions $\{\vp_{\lambda}(z_{\lambda})\}_{\lambda \in \Lambda}$ on $\D^n$,  such that
\[
\cls  =  \sum_{\lambda \in \Lambda} \vp_{\lambda}(z_{\lambda})H^2(\D^n),
\]
if and only if the following conditions are satisfied. 
\begin{equation}\label{condns1}
(I_{\cls}-E_{z_l}E_{z_l}^*)(I_{\cls}-R_{z_l}R_{z_l}^*)=0 \quad (\forall l\in \{1,\dots,n\} \setminus \Lambda),
\end{equation} 
\begin{equation}\label{condns2}
(I_{\cls}-E_{z_{i}}E_{z_{i}}^*)(I_{\cls}-R_{z_{i}}R_{z_{i}}^*)(I_{\cls}-E_{z_{j}}E_{z_{j}}^*)(I_{\cls}-R_{z_{j}}R_{z_{j}}^*)=0 \quad (\text{ for all distinct }i, j \in \Lambda).
\end{equation}
\end{thm}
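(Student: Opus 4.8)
The plan is to convert the two hypotheses into transparent geometric conditions on the wandering subspaces of $\cls$ and, dually, on the compressions of the coordinate shifts to the quotient module $\clq:=\cls^{\perp}$.

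\textbf{Step 1 (reformulating the basic factor).} Set $B_{z_j}:=(I_{\cls}-E_{z_j}E_{z_j}^{*})(I_{\cls}-R_{z_j}R_{z_j}^{*})$ and $W_{z_j}:=\cls\ominus z_j\cls$, so that $I_{\cls}-R_{z_j}R_{z_j}^{*}=P_{W_{z_j}}$. Let $X_j:=\ps M_{z_j}|_{\clq}\colon\clq\raro\cls$ be the off-diagonal block of $M_{z_j}$ in $H^2(\D^n)=\cls\oplus\clq$. Since $ev_{z_j}=I-M_{z_j}M_{z_j}^{*}$ and $E_{z_j}=\ps\,ev_{z_j}\,\ps|_{\cls}$, compressing to $\cls$ gives $E_{z_j}=I_{\cls}-R_{z_j}R_{z_j}^{*}-X_jX_j^{*}=P_{W_{z_j}}-X_jX_j^{*}$; in particular $E_{z_j}$ is self-adjoint, it annihilates $z_j\cls$, and $\operatorname{ran}X_j\subseteq W_{z_j}$. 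A direct computation then yields $I_{\cls}-E_{z_j}E_{z_j}^{*}=X_jX_j^{*}(2I-X_jX_j^{*})$ on $W_{z_j}$, where $2I-X_jX_j^{*}$ is invertible; hence, with $\clr_j:=\overline{\operatorname{ran}}\,X_j$, one obtains $\overline{\operatorname{ran}}\,B_{z_j}=\clr_j$ and $\ker B_{z_j}=\clr_j^{\perp}$ (inside $\cls$). I expect this to be routine, and it already gives $B_{z_j}=0\Leftrightarrow X_j=0$ and $B_{z_i}B_{z_j}=0\Leftrightarrow\clr_j\perp\clr_i\Leftrightarrow X_i^{*}X_j=0$.

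\textbf{Step 2 (passing to the quotient and the forward direction).} The family $\{M_{z_i}\}$ is doubly commuting on $H^2(\D^n)$, i.e.\ $M_{z_i}^{*}M_{z_j}=M_{z_j}M_{z_i}^{*}$ for $i\neq j$. Writing $\ci:=\pq M_{z_i}|_{\clq}$ and expanding this identity in $2\times2$ blocks relative to $\cls\oplus\clq$, the $(2,2)$-entry reads $X_i^{*}X_j+\ci^{*}\cj=\cj\,\ci^{*}$, that is $[\ci^{*},\cj]=-X_i^{*}X_j$. Combined with Step 1, condition \eqref{condns2} is therefore equivalent to $[\ci^{*},\cj]=0$ for all distinct $i,j\in\Lmd$ (doubly commuting compressions), while \eqref{condns1} is equivalent to $X_l=0$, i.e.\ to $\clq$ (equivalently $\cls$) being reducing for $M_{z_l}$ for each $l\notin\Lmd$. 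The forward implication is now immediate: if $\cls=\sum_{\lmd\in\Lmd}\vp_\lmd(z_\lmd)H^2(\D^n)$ then $\clq=\bigotimes_{\lmd\in\Lmd}K_{\vp_\lmd}\otimes\bigotimes_{l\notin\Lmd}H^2_{z_l}$, in which $\ci$ acts as the model operator $S_{\vp_i}$ in the $i$-th slot and as the identity elsewhere; these manifestly doubly commute, and $\clq$ is reducing in the $l$-slots for $l\notin\Lmd$, so \eqref{condns1}--\eqref{condns2} hold.

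\textbf{Step 3 (converse: peeling the free variables).} For the converse I would first use \eqref{condns1}: since $\clq$ reduces each $M_{z_l}$ with $l\notin\Lmd$, we may factor $\cls=\cls^{(\Lmd)}\otimes\bigotimes_{l\notin\Lmd}H^2_{z_l}$ with $\cls^{(\Lmd)}$ a submodule of the Hardy space in the $\Lmd$-variables. Because $X_i=X_i^{(\Lmd)}\otimes I$ and $\ci=\ci^{(\Lmd)}\otimes I$ for $i\in\Lmd$, condition \eqref{condns2} descends verbatim to $\cls^{(\Lmd)}$. This reduces everything to the case $\Lmd=\{1,\dots,n\}$, where the remaining claim is precisely: \emph{a quotient module $\clq$ of $H^2(\D^n)$ whose compressions doubly commute is a tensor product $\bigotimes_{j=1}^{n}K_{\vp_j}$ of one-variable model spaces}, with the $\vp_j$ non-constant as soon as $\cls$ is proper.

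\textbf{Step 4 (the crux).} This last statement is a dual, quotient-module analogue of Mandrekar's Theorem~\ref{Beurling}, and I regard it as the main obstacle; note it does \emph{not} follow by applying Theorem~\ref{Beurling} to $\cls$, since the relevant block identity is $[R_{z_i}^{*},R_{z_j}]=X_jX_i^{*}$, which does not vanish for a sum of inner submodules. I would prove it by induction on $n$. As $\clq$ is invariant under $M_{z_n}^{*}$, a vector-valued Beurling--Lax argument in the $z_n$-variable realizes $\clq$ as a model space $K_{\Theta}$ for an operator-valued inner function $\Theta$ over $H^2(\D^{n-1})$; the doubly commutativity $[C_{z_n}^{*},\cj]=0$ for $j<n$ should force $\Theta$ to reduce to a scalar inner function $\vp_n(z_n)$ and simultaneously split off a lower-dimensional factor, giving $\clq=\clq'\otimes K_{\vp_n}$ with $\clq'\subseteq H^2(\D^{n-1})$ again having doubly commuting compressions. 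The inductive hypothesis applied to $\clq'$ then yields $\clq=\bigotimes_{j}K_{\vp_j}$, and dualizing back produces $\cls=\sum_{j}\vp_j(z_j)H^2(\D^n)$ (closedness of the sum being automatic once $\clq$ is identified). The delicate point throughout is controlling $\Theta$: showing that doubly commutativity collapses the operator-valued symbol to a scalar one tensored with a doubly commuting quotient module in the remaining variables.
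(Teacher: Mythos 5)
Your Steps 1--3 are correct and are, in substance, the same reduction the paper makes. Your block identities $B_{z_j}=X_jX_j^*$ and $[C_{z_j},C_{z_i}^*]=X_i^*X_j$ are exactly the paper's Lemma \ref{commutator rest}, Lemma \ref{ABC}, Lemma \ref{1st factor} and Lemma \ref{comp} (there written as $[R_{z_j}^*,R_{z_i}]=P_{\cls}M_{z_i}P_{\clq}M_{z_j}^*|_{\cls}$ and $(I_{\cls}-E_{z_j}E_{z_j}^*)(I_{\cls}-R_{z_j}R_{z_j}^*)=P_{\cls}M_{z_j}P_{\clq}M_{z_j}^*|_{\cls}$), and they convert \eqref{condns1}--\eqref{condns2} into ``$\cls$ reduces $M_{z_l}$ for $l\notin\Lambda$'' plus ``doubly commuting compressions in the $\Lambda$-directions,'' which is precisely what the paper's converse establishes. (A notational quibble: with the paper's convention $E_{z_j}=P_{\cls}\circ ev_{z_j}$ one has $E_{z_j}E_{z_j}^*=P_{\cls}\,ev_{z_j}\,P_{\cls}$, so $I_{\cls}-E_{z_j}E_{z_j}^*$ restricted to $W_{z_j}$ is $X_jX_j^*$ rather than your $X_jX_j^*(2I-X_jX_j^*)$ --- you squared the compression; since $2I-X_jX_j^*$ is positive and invertible, the range/kernel conclusions you actually use are unaffected.) Your forward direction via the tensor form of $\clq$ is fine and is arguably cleaner than the paper's explicit computation with the projection formula.

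The real issue is Step 4. The statement you isolate as the crux --- a quotient module of $H^2(\D^n)$ with doubly commuting compressions is a tensor product of one-variable quotient modules --- is not something to be proved here: it is exactly Theorem \ref{co-doubly} of the paper, a known theorem of Izuchi--Nakazi--Seto ($n=2$) and Sarkar ($n\geq 3$), which the paper cites rather than reproves. The paper's converse consists of your Steps 1--2 (in defect-operator language), the observation that the reducing directions force $[C_{z_i},C_{z_j}^*]=0$ for the remaining pairs as well (making your Step 3 peeling unnecessary), and then an appeal to Theorem \ref{co-doubly} together with one-variable Beurling. Your own sketch of a proof of the splitting theorem --- induction via a vector-valued Beurling--Lax representation, with the assertion that double commutativity ``should force'' the operator-valued inner function $\Theta$ to collapse to a scalar --- is not a proof: that collapse is the entire content and difficulty of the Izuchi--Nakazi--Seto/Sarkar theorem, and your outline does not address it. So judged as a self-contained argument, your proposal has a genuine gap at exactly this point; judged the way the paper judges itself (quoting the splitting theorem from the literature), your argument is complete and structurally identical to the paper's. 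One further caution: your claim that properness of $\cls$ forces every $\vp_j$, $j\in\Lambda$, to be non-constant is too quick --- for instance $\cls=\vp_1(z_1)H^2(\D^2)$ with $\Lambda=\{1,2\}$ satisfies \eqref{condns1}--\eqref{condns2} (here $X_2=0$), yet admits no representation with a non-constant $\vp_2$, since the tensor factor $\clq_2$ is all of $H^2(\D)$. The paper's proof elides the same point, so this is a shared imprecision rather than a defect peculiar to your route.
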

The proof relies on several results on arbitrary submodules which we gather in section \ref{Douglas}. For instance, we prove and use a fact that for any submodule $\cls$,
\[
[(I_{\cls}-E_{z_{i}}E_{z_{i}}^*),(I_{\cls}-R_{z_{i}}R_{z_{i}}^*)] = 0 \quad (\forall i \in \{1,\ldots,n\}).
\]
Furthermore, the novelty of this section lies in identifying the interplay between $[R_{z_i}^*, R_{z_j}]$ and the cross-commutators  $[C_{z_i}^*, C_{z_j}]$, corresponding to compressions $C_{z_i}: = P_{\clq}M_{z_i}|_{\clq}$ on the quotient modules $\clq:= H^2(\D^n) \ominus \cls$ (that is, the orthogonal complement of $\cls$ inside $H^2(\D^n)$). By definition, it is clear that $\clq$ is $M_{\bm{z}}^*$-invariant that is, $M_{z_i}^* \clq \subseteq \clq$ for all $i \in \{1,\ldots,n\}$. We hope many of these results will prove useful in further studies. 

From the algebraic condition in Theorem \ref{Beurling}, it is evident that Beurling-type submodules of $H^2(\D^n)$ satisfy the following commutativity,
\[
[(I_{\cls}-R_{z_i}R_{z_i}^*),(I_{\cls}-R_{z_j}R_{z_j}^*)]=0 \quad(\text{ for all distinct }i,j \in \{1,\ldots,n\}),
\]
in other words, the defects of restrictions commute with each other. Motivated by this feature, in section \ref{defects}, we have completely characterized when the defects of restrictions corresponding to submodules of type $\cls  =  \sum_{\lambda \in \Lambda} \vp_{\lambda}(z_{\lambda})H^2(\D^n),$ commute with each other.  
\begin{thm}\label{commuting defects}
Let $\cls = \sum_{\lambda \in \Lambda} \vp_{\lambda}(z_{\lambda})H^2(\D^n)$ be a submodule of $H^2(\D^n)$, where $\vp_{\lambda}$ are non-constant inner functions. For any distinct $i,j\in \Lambda$, we have 
 $$[(I_{\cls_{\Phi_{\Lambda}}}-R_{z_{i}}R_{z_{i}}^*), (I_{\cls_{\Phi_{\Lambda}}}-R_{z_{j}}R_{z_{j}}^*)]=0$$  if and only if $\vp_{i}(0)=0$ and $\vp_{j}(0)=0$.
\end{thm}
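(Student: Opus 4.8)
The plan is to reduce the statement to a finite-rank computation in the two variables $z_i,z_j$ and then read off the condition $\vp_i(0)=\vp_j(0)=0$ from the coefficients of a handful of explicit rank-one operators.

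First I would set up the tensor picture. Writing $H^2(\D^n)=\bigotimes_{m=1}^{n}H^2(\D)_{z_m}$ and letting $K_\lambda:=H^2(\D)_{z_\lambda}\ominus\vp_\lambda H^2(\D)_{z_\lambda}$ be the model space of $\vp_\lambda$, the quotient module factors as $\clq=H^2(\D^n)\ominus\cls=\big(\bigotimes_{\lambda\in\Lambda}K_\lambda\big)\otimes\big(\bigotimes_{k\notin\Lambda}H^2(\D)_{z_k}\big)$, since $\cls^{\perp}=\bigcap_{\lambda\in\Lambda}(\vp_\lambda H^2(\D^n))^{\perp}$ and each factor is a coordinate subspace. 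Hence $P_\clq=\bigotimes_{\lambda\in\Lambda}P_{K_\lambda}\otimes\bigotimes_{k\notin\Lambda}I$. Next I note that $R_{z_i}$ is an isometry on $\cls$, so $I_{\cls}-R_{z_i}R_{z_i}^*$ is the projection of $\cls$ onto $\cls\ominus z_i\cls$; extended by $0$ on $\clq$ it equals $P_i:=P_{\cls}-M_{z_i}P_{\cls}M_{z_i}^*$ on all of $H^2(\D^n)$, and the commutator in the theorem is just $[P_i,P_j]$ restricted to $\cls$.

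Grouping the variables as (slot $i$, slot $j$, rest) and using $P_{\cls}=I-P_\clq$ and $M_{z_i}=S_i\otimes I\otimes I$ (with $S_i$ the shift on $H^2(\D)_{z_i}$), a direct computation gives $P_i=P_0^{(i)}\otimes I\otimes I-A_i\otimes P_{K_j}\otimes P_{\mathrm{rest}}$, and symmetrically for $P_j$, where $P_0^{(i)}$ is the projection onto functions constant in $z_i$, $P_{\mathrm{rest}}$ is the projection onto $\bigotimes_{\lambda\in\Lambda\setminus\{i,j\}}K_\lambda\otimes\bigotimes_{k\notin\Lambda}H^2(\D)$, and $A_i:=P_{K_i}-S_iP_{K_i}S_i^*$. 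Multiplying out, the terms carrying the pure factor $P_0^{(i)}\otimes P_0^{(j)}$ cancel and every surviving term carries $P_{\mathrm{rest}}$ in the third slot, so $[P_i,P_j]=M\otimes P_{\mathrm{rest}}$ for an operator $M$ on $H^2(\D)_{z_i}\otimes H^2(\D)_{z_j}$. Since each $\vp_\lambda$ is non-constant, every $K_\lambda\neq\{0\}$, hence $P_{\mathrm{rest}}\neq0$, and the problem collapses to proving $M=0\iff\vp_i(0)=\vp_j(0)=0$, i.e. to the case $n=2$, $\Lambda=\{i,j\}$.

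The linchpin is the identity $A_i=P_0^{(i)}-P_{\mathbb{C}\vp_i}$, the difference of the rank-one projections onto the constants and onto $\mathbb{C}\vp_i$. I would derive it from $P_{K_i}=I-M_{\vp_i}M_{\vp_i}^*$ together with $S_iM_{\vp_i}=M_{\vp_i}S_i$ and $M_{\vp_i}^*1=\overline{\vp_i(0)}\,1$; the same identities yield $A_iP_{K_i}=P_0^{(i)}-\vp_i(0)\,|1\rangle\langle\vp_i|$, $P_{K_i}A_i=P_0^{(i)}-\overline{\vp_i(0)}\,|\vp_i\rangle\langle1|$, and $[P_{K_i},P_0^{(i)}]=-\overline{\vp_i(0)}\,|\vp_i\rangle\langle1|+\vp_i(0)\,|1\rangle\langle\vp_i|$. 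Substituting these (and the $z_j$-analogues) into $M=[P_{K_i},P_0^{(i)}]\otimes A_j-A_i\otimes[P_{K_j},P_0^{(j)}]+A_iP_{K_i}\otimes P_{K_j}A_j-P_{K_i}A_i\otimes A_jP_{K_j}$ expands everything in the sixteen rank-one tensors $|a\rangle\langle b|\otimes|c\rangle\langle d|$ with $a,b\in\{1,\vp_i\}$, $c,d\in\{1,\vp_j\}$, which are linearly independent because $\vp_i,\vp_j$ are non-constant. The main work is the cancellation bookkeeping: the lower-order contributions cancel in pairs, leaving $M$ supported on exactly six of these tensors, namely $|\vp_i\rangle\langle1|\otimes|\vp_j\rangle\langle\vp_j|$, $|1\rangle\langle\vp_i|\otimes|\vp_j\rangle\langle\vp_j|$, $|\vp_i\rangle\langle\vp_i|\otimes|\vp_j\rangle\langle1|$, $|\vp_i\rangle\langle\vp_i|\otimes|1\rangle\langle\vp_j|$, $|1\rangle\langle\vp_i|\otimes|\vp_j\rangle\langle1|$, $|\vp_i\rangle\langle1|\otimes|1\rangle\langle\vp_j|$, with coefficients $\overline{\vp_i(0)}$, $-\vp_i(0)$, $-\overline{\vp_j(0)}$, $\vp_j(0)$, $\vp_i(0)\overline{\vp_j(0)}$, $-\overline{\vp_i(0)}\vp_j(0)$. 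By linear independence $M=0$ forces $\vp_i(0)=\vp_j(0)=0$, and conversely these immediately kill all six coefficients (equivalently, $\vp_i(0)=0$ makes $A_iP_{K_i}=P_{K_i}A_i=P_0^{(i)}$, collapsing the two product terms). I expect the only delicate point to be the cancellation step; once $A_i=P_0^{(i)}-P_{\mathbb{C}\vp_i}$ is in hand, every remaining operator is finite rank and the verification is purely algebraic.
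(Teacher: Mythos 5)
Your proof is correct, and it takes a genuinely different route from the paper's. The paper's proof works with the orthogonal decomposition of $P_{\cls_{\Phi_{\Lambda}}}$ in (\ref{projn2}), conjugates the commutator of the two defect operators by $M_{\vp_{\lambda_i}}^*$ and $M_{\vp_{\lambda_i}}$, and after a lengthy term-by-term computation reduces the hypothesis to the vanishing of a product of three factors depending on disjoint sets of variables; ruling out two of the factors, it concludes $[M_{\vp_{\lambda_j}}M_{\vp_{\lambda_j}}^*,M_{z_{\lambda_j}}M_{z_{\lambda_j}}^*]=0$ and then invokes \cite[Theorem 2.2]{DPS} to get $\vp_{\lambda_j}(0)=0$ (symmetrically for $i$); the converse is a separate long computation showing the product of the two defect projections is self-adjoint. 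You instead exploit the tensor structure: writing $I_{\cls}-R_{z_i}R_{z_i}^*$ globally as $P_0^{(i)}\otimes I\otimes I-A_i\otimes P_{K_j}\otimes P_{\mathrm{rest}}$ with $A_i=P_{K_i}-S_iP_{K_i}S_i^*$, the commutator becomes $M\otimes P_{\mathrm{rest}}$, and the identity $A_i=P_0^{(i)}-P_{\mathbb{C}\vp_i}$ (which follows from $S_iM_{\vp_i}=M_{\vp_i}S_i$) turns $M$ into explicit rank-one algebra. I verified your expansion: the low-order terms do cancel in pairs, $M$ is supported on exactly the six rank-one tensors you list with the coefficients $\overline{\vp_i(0)},-\vp_i(0),-\overline{\vp_j(0)},\vp_j(0),\vp_i(0)\overline{\vp_j(0)},-\overline{\vp_i(0)}\vp_j(0)$, and linear independence (valid since $\vp_i,\vp_j$ are non-constant, so $\{1,\vp_i\}$ and $\{1,\vp_j\}$ are independent) gives both implications at once. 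Your route buys self-containedness and symmetry: no appeal to the inner-projection dichotomy of \cite{DPS} is needed (its role is played by the elementary identity $A_iP_{K_i}=P_0^{(i)}-\vp_i(0)|1\rangle\langle\vp_i|$), the disjoint-variable separation argument is replaced by honest linear independence, and the converse direction requires no extra work. What the paper's approach buys is reusability: its intermediate machinery --- the decomposition (\ref{projn2}), the conjugation trick, and the partial-isometry criterion from \cite{DPS} --- is deployed again in Theorem \ref{partial} and Theorem \ref{main2}, so the longer computation is amortized across the rest of the paper.
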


An intrinsic question in operator theory is when two orthogonal projections say $P_1, P_2$ on a Hilbert space $\clh$, commute with each other. It is well-known that this commutativity holds if and only if $P_1P_2 = P_{\mbox{ran} P_1 \cap \mbox{ran} P_2}$. The analogous question for $H^2(\D^n)$ can be stated as the following: \textit{when does orthogonal projections $P_{\cls_i}$ onto submodules $\cls_i$ for $i=1,2$ commute with each other?} This is a challenging question, and a full description seems out of reach with the existing understanding of general submodules of $H^2(\D^n)$. We refer the reader to recent progress made for Beurling-type submodules \cite{DPS}. In section \ref{commuting}, we completely answer this question for the above type of submodules.
\begin{thm}\label{comm_projn}
Let $\cls_{\Phi_{\Lambda}}$ and $\cls_{\Psi_{\Gamma}}$ be the following submodules of $H^2(\D^n)$
\[
\cls_{\Phi_{\Lambda}} = \sum_{\lambda \in \Lambda} \vp_{\lambda}(z_\lambda) H^2(\D^n); \quad \cls_{\Psi_{\Gamma}} = \sum_{t \in \Gamma} \psi_{t}(z_t) H^2(\D^n),
\]
corresponding to non-constant inner functions $\{\vp_{\lambda}(z_{\lambda})\}_{\Lambda}$ and $\{\psi_{t}(z_{t})\}_{\Gamma}$. Then $[P_{\cls_{\Phi_{\Lambda}}}, P_{\cls_{\Psi_{\Gamma}}}]=0$ if and only if either $\vp_j|\psi_j$ or $\psi_j|\vp_j$ for all $j\in \Lambda \cap \Gamma$.
\end{thm}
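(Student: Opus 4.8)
The plan is to reduce the commutativity of the two large projections to a condition on each common variable $z_j$, $j\in\Lambda\cap\Gamma$, where it becomes the classical one–variable question of when two Beurling projections of $H^2(\D)$ commute.

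First I would pass to the orthogonal complements. Since $P_{\cls_{\Phi_{\Lambda}}}=I-P_{\cls_{\Phi_{\Lambda}}^{\perp}}$ and likewise for $\Psi_{\Gamma}$, one has $[P_{\cls_{\Phi_{\Lambda}}},P_{\cls_{\Psi_{\Gamma}}}]=[P_{\cls_{\Phi_{\Lambda}}^{\perp}},P_{\cls_{\Psi_{\Gamma}}^{\perp}}]$, so it is enough to decide when the complement projections commute. Writing $H^2(\D^n)=\bigotimes_{k=1}^{n}H^2(\D)$ with the $k$-th factor carrying the variable $z_k$, and using $(\overline{\sum_{\lambda}V_{\lambda}})^{\perp}=\bigcap_{\lambda}V_{\lambda}^{\perp}$ together with the fact that each $\vp_{\lambda}$ depends on the single variable $z_{\lambda}$, I would identify $\cls_{\Phi_{\Lambda}}^{\perp}=\bigcap_{\lambda\in\Lambda}(\vp_{\lambda}(z_{\lambda})H^2(\D^n))^{\perp}$ as the tensor product whose $k$-th factor is the model space $\clq_{\vp_k}:=H^2(\D)\ominus\vp_k H^2(\D)$ for $k\in\Lambda$ and $H^2(\D)$ otherwise. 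Hence $P_{\cls_{\Phi_{\Lambda}}^{\perp}}=\bigotimes_{k}A_k$ with $A_k=P_{\clq_{\vp_k}}$ for $k\in\Lambda$ and $A_k=I$ for $k\notin\Lambda$, and similarly $P_{\cls_{\Psi_{\Gamma}}^{\perp}}=\bigotimes_{k}B_k$ with $B_k=P_{\clq_{\psi_k}}$ for $k\in\Gamma$.

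Next I would compute the two orderings slotwise. In $\bigotimes_k A_kB_k$ versus $\bigotimes_k B_kA_k$ every factor outside $\Lambda\cap\Gamma$ carries at least one identity, hence agrees in both products; the two tensors therefore have the common form $X\otimes R$ and $X^{*}\otimes R$, where $X=\bigotimes_{j\in\Lambda\cap\Gamma}P_{\clq_{\vp_j}}P_{\clq_{\psi_j}}$ and $R$ is the product over the non–shared slots of the projections $P_{\clq_{\vp_k}}$, $P_{\clq_{\psi_k}}$ and identities. Since each $\vp_k,\psi_k$ is non–constant, every such factor is nonzero, so $R\neq 0$, and $X\otimes R=X^{*}\otimes R$ forces $X=X^{*}$. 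It then remains to descend this to the factors: setting $C_j:=P_{\clq_{\vp_j}}P_{\clq_{\psi_j}}$, if all $C_j\neq0$ the rigidity of elementary tensors gives $C_j=\lambda_j C_j^{*}$ with $|\lambda_j|=1$, and applying $P_{\clq_{\vp_j}}$ on either side of $P_{\clq_{\vp_j}}P_{\clq_{\psi_j}}=\lambda_j P_{\clq_{\psi_j}}P_{\clq_{\vp_j}}$ and combining the two resulting identities forces $\lambda_j=1$, i.e. $C_j=C_j^{*}$. The degenerate case $C_j=0$ is excluded: it would mean $\clq_{\vp_j}\perp\clq_{\psi_j}$, whence $P_{\clq_{\vp_j}},P_{\clq_{\psi_j}}$ commute and $P_{\vp_j H^2}+P_{\psi_j H^2}=I+P_{\vp_j\psi_j H^2}$ (the greatest common inner divisor is then trivial, so the least common multiple is $\vp_j\psi_j$); evaluating this identity at the constant function $1$ and then at the origin gives $(1-|\vp_j(0)|^2)(1-|\psi_j(0)|^2)=0$, impossible for non–constant inner functions. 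Thus $[P_{\cls_{\Phi_{\Lambda}}},P_{\cls_{\Psi_{\Gamma}}}]=0$ if and only if $[P_{\clq_{\vp_j}},P_{\clq_{\psi_j}}]=0$ for every $j\in\Lambda\cap\Gamma$.

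Finally I would settle the one–variable criterion: for non–constant inner $\vp,\psi$ on $\D$, $[P_{\clq_{\vp}},P_{\clq_{\psi}}]=0$ (equivalently $[P_{\vp H^2},P_{\psi H^2}]=0$) iff $\vp\mid\psi$ or $\psi\mid\vp$. The forward implication is immediate, since $\vp\mid\psi$ gives $\psi H^2\subseteq\vp H^2$ and hence $P_{\vp H^2}P_{\psi H^2}=P_{\psi H^2}=P_{\psi H^2}P_{\vp H^2}$; this is exactly what supplies the asserted condition on $\Lambda\cap\Gamma$. The converse is the genuine core and the step I expect to be the main obstacle. Writing $d=\gcd(\vp,\psi)$, $\vp=d\vp'$, $\psi=d\psi'$ and cancelling the isometry $M_d$, commutativity is equivalent to the model space $\clq_{\vp'}$ being invariant under $M_{\psi'}$. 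Since $M_{\psi'}$ is a pure isometry, its restriction to the invariant space $\clq_{\vp'}$ is again a pure isometry, so if $\clq_{\vp'}\neq\{0\}$ it has a nonzero wandering vector $w$; translating $\psi'^{n}w\perp\vp'H^2$ for all $n\ge0$ into boundary language yields $g\in\bigcap_{n\ge0}\psi'^{n}H^2=\{0\}$ for the associated $g$ with $w\,\overline{\vp'}=\bar z\,\bar g$, forcing $w=0$, a contradiction. Hence $\clq_{\vp'}=\{0\}$, i.e. $\vp'$ is constant and $\vp\mid\psi$ (symmetrically $\psi\mid\vp$); alternatively one may quote the one–variable case of the commuting–projection criterion for Beurling-type submodules from \cite{DPS}. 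Combining the slotwise reduction with this criterion gives the theorem. The two delicate points are the descent of the tensor identity to single factors (handled by the scalar computation together with the exclusion of orthogonal model spaces) and the one–variable converse, which carries the essential operator-theoretic content.
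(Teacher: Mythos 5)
Your proof is correct, but it follows a genuinely different route from the paper's. The paper first proves the special case $\Lambda=\Gamma=\{1,\ldots,n\}$ by a long computation on the \emph{submodule} side: using the ordered decomposition $P_{\cls_{\Phi}}=\sum_{i}M_{\vp_i}M_{\vp_i}^*\prod_{k>i}(I-M_{\vp_k}M_{\vp_k}^*)$ and an index-swapping rearrangement $\cls_{\Phi,j}$, it telescopes the difference $M_{\vp_j}^*\big(P_{\cls_{\Phi}}P_{\cls_{\Psi}}-P_{\cls_{\Psi}}P_{\cls_{\Phi}}\big)M_{\psi_j}$ into a product of the form $(P_{n,j}-P_{n,j}P_{n,j}^*P_{n,j})\cdot\prod(I-M_{\psi_i}M_{\psi_i}^*)\prod(I-M_{\vp_i}M_{\vp_i}^*)$, where $P_{n,j}=M_{\vp_j}^*M_{\psi_j}$; non-vanishing of the second factor (the paper's Lemma on products of one-variable defect projections) forces $M_{\vp_j}^*M_{\psi_j}$ to be a partial isometry, and \cite[Theorem 2.2]{DPS} converts this into divisibility. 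The general case is then deduced by enlarging both submodules (adding $z_kH^2(\D^n)$ for $k\notin\Lambda\cup\Gamma$ and the unshared inner functions to both) and factoring over disjoint variables on the complement side. You instead work on the complement side from the very start, where both projections are elementary tensors; the commutator becomes $\big(\bigotimes_j C_j-\bigotimes_j C_j^*\big)\otimes R$ with $C_j=P_{\clq_{\vp_j}}P_{\clq_{\psi_j}}$, and non-vanishing of $R$ plus rigidity of elementary tensors (with your scalar argument killing the unimodular constants) reduces everything to the one-variable question $[P_{\clq_{\vp_j}},P_{\clq_{\psi_j}}]=0$; your evaluation-at-the-origin argument excluding $C_j=0$ plays exactly the role of the paper's non-vanishing lemma, though proved differently. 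Finally you either quote \cite{DPS} (as the paper does) or give a self-contained proof of the one-variable criterion via the gcd-cancellation, the invariance characterization of commuting projections, and purity of $M_{\psi'}$ ($\bigcap_n\psi'^nH^2=\{0\}$); this last argument is sound, granting the standard inner-function facts $\overline{\vp H^2+\psi H^2}=\gcd(\vp,\psi)H^2$ and $\vp H^2\cap\psi H^2=\mathrm{lcm}(\vp,\psi)H^2$. What your approach buys is a substantially shorter and more conceptual proof that handles arbitrary $\Lambda,\Gamma$ in one stroke, with no auxiliary enlargement and no multi-page operator identities; what the paper's approach buys is that it stays entirely within explicit multiplication-operator computations (no tensor-rigidity input) and produces along the way the intermediate identities and the stand-alone Theorem for $\Lambda=\Gamma=\{1,\ldots,n\}$ that are reused elsewhere in the paper.
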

The proof of this result is not straightforward and involves several crucial steps. The main difficulty in proving this result lies in the structure of these submodules. Although we want to study when two projections commute, it is worth noting that each of these projections is again a sum of orthogonal projections. In other words, we have to prove when sums of orthogonal projections commute, which significantly raises the difficulty. 

In \cite{BL}, the authors have recorded the following question raised by R.G. Douglas: \textit{when does the product of two orthogonal projections corresponding to Beurling-type quotient modules become a finite-rank projection?} Note that a quotient module $\clq \subseteq H^2(\D^n)$ is said to be a \textit{Beurling-type} quotient module if there exists an inner function $\theta(\bm{z}) \in H^{\infty}(\D^n)$ such that $\clq = \clq_{\theta}:= H^2(\D^n) \ominus \theta(\bm{z}) H^2(\D^n)$. Douglas's question is fundamental in the study of both submodules and quotient modules of $H^2(\D^n)$ (see for instance, see \cite{BL, GW}), and was recently solved by Debnath et al. in \cite[Theorem 1.4]{DPS}.  Their result, in the case of $n>2$ shows that the product $P_{\clq_{\theta}}  P_{\clq_{\psi}}$ corresponding to inner functions $\phi(\bm{z}), \psi(\bm{z}) \in H^{\infty}(\D^n)$ is a finite-rank projection only when it is zero.  However, we have a non-trivial answer to Douglas's question for the submodules which we have considered. Using Theorem \ref{comm_projn}, we establish the analogous result for quotient modules corresponding to quotient modules $\clq_{\Phi_{\Lambda}}: = H^2(\D^n) \ominus\cls_{\Phi_{\Lambda}}$, and $\clq_{\Psi_{\Lambda}}: = H^2(\D^n) \ominus\cls_{\Psi_{\Gamma}}.$
\begin{thm}\label{finite-rank}
Let $\Lambda, \Gamma$ be subsets of $\{1,\ldots,n\}$ such that $\Lambda \cup \Gamma = \{1,\ldots,n\}$ and consider the corresponding quotient modules $\clq_{\Phi_{\Lambda}}, \clq_{\Psi_{\Gamma}}$ inside $H^2(\D^n)$, where $\{\vp_{\lambda}(z_{\lambda})\}_{\Lambda}$ and $\{\psi_{t}(z_{t})\}_{\Gamma}$ are non-constant inner functions. Then $P=P_{\clq_{\Phi_{\Lambda}}}P_{\clq_{\Psi_{\Gamma}}}$ is a finite rank projection if and only if the inner functions $\{\vp_i(z_i)\}_{i \in \Lambda \setminus \Lambda \cap \Gamma}$ and $\{\psi_j(z_j)\}_{j \in \Gamma \setminus \Lambda \cap \Gamma}$ are finite Blaschke products, and for all $j \in \Lambda \cap \Gamma$ any one of the following conditions hold,
\begin{enumerate}
\item[(a)] $\vp_j \text{ divides } \psi_j$ and $\vp_j$ is a finite Blaschke product,
\item[(b)] $\psi_j \text{ divides } \vp_j$ and $\psi_j$ is a finite Blaschke product. 
\end{enumerate}
\end{thm}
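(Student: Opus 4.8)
The plan is to reduce the assertion to a dimension count on an explicit tensor-product decomposition of the intersection quotient module, using Theorem \ref{comm_projn} to handle the algebraic (divisibility) part. The basic principle is that a product of two orthogonal projections is itself an orthogonal projection exactly when the two projections commute, in which case it is the projection onto the intersection of their ranges. Hence $P=P_{\clq_{\Phi_\Lambda}}P_{\clq_{\Psi_\Gamma}}$ is a finite-rank projection if and only if (i) $[P_{\clq_{\Phi_\Lambda}},P_{\clq_{\Psi_\Gamma}}]=0$, and (ii) $\dim(\clq_{\Phi_\Lambda}\cap \clq_{\Psi_\Gamma})<\infty$. Since $P_{\clq_{\Phi_\Lambda}}=I-P_{\cls_{\Phi_\Lambda}}$ and $P_{\clq_{\Psi_\Gamma}}=I-P_{\cls_{\Psi_\Gamma}}$, condition (i) is equivalent to $[P_{\cls_{\Phi_\Lambda}},P_{\cls_{\Psi_\Gamma}}]=0$, which by Theorem \ref{comm_projn} holds if and only if, for every $j\in\Lambda\cap\Gamma$, either $\vp_j\mid\psi_j$ or $\psi_j\mid\vp_j$. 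This already produces the divisibility dichotomy behind (a) and (b); the remaining work is to control the dimension of the intersection.

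For the intersection I would first record the tensor decompositions. Writing $H^2(\D^n)=\bigotimes_{j=1}^n H^2(\D_{z_j})$ and using $\vp_\lambda(z_\lambda)H^2(\D^n)=\big(\vp_\lambda H^2(\D_{z_\lambda})\big)\otimes\bigotimes_{k\neq\lambda}H^2(\D_{z_k})$ together with $\clq_{\Phi_\Lambda}=\big(\sum_{\lambda}\vp_\lambda H^2(\D^n)\big)^{\perp}=\bigcap_{\lambda}\big(H^2(\D^n)\ominus \vp_\lambda H^2(\D^n)\big)$, one obtains $\clq_{\Phi_\Lambda}=\bigotimes_{j=1}^n A_j$, where $A_j=\clk_{\vp_j}$ (the one-variable model space in $z_j$) for $j\in\Lambda$ and $A_j=H^2(\D_{z_j})$ for $j\notin\Lambda$; similarly $\clq_{\Psi_\Gamma}=\bigotimes_{j=1}^n B_j$. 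The corresponding projections are $\bigotimes_j P_{A_j}$ and $\bigotimes_j P_{B_j}$. Under the divisibility hypothesis from (i), the factorwise projections commute for every $j$ (trivially when one of $A_j,B_j$ is the full space $H^2(\D_{z_j})$, and for $j\in\Lambda\cap\Gamma$ because two one-variable model-space projections commute iff one space contains the other, i.e. iff $\vp_j\mid\psi_j$ or $\psi_j\mid\vp_j$). Consequently $P=\bigotimes_j\big(P_{A_j}P_{B_j}\big)=\bigotimes_j P_{A_j\cap B_j}$ is the orthogonal projection onto $\clq_{\Phi_\Lambda}\cap\clq_{\Psi_\Gamma}=\bigotimes_{j=1}^n(A_j\cap B_j)$.

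With this identity in hand I would compute $A_j\cap B_j$ variable by variable. Here the hypothesis $\Lambda\cup\Gamma=\{1,\ldots,n\}$ is essential: it guarantees that no variable is ``free,'' so that no factor equals the infinite-dimensional full space $H^2(\D_{z_j})$. Indeed, for $j\in\Lambda\setminus\Gamma$ we get $A_j\cap B_j=\clk_{\vp_j}$, for $j\in\Gamma\setminus\Lambda$ we get $A_j\cap B_j=\clk_{\psi_j}$, and for $j\in\Lambda\cap\Gamma$ the containment $\clk_{\vp_j}\subseteq\clk_{\psi_j}$ (when $\vp_j\mid\psi_j$), or its reverse, makes $A_j\cap B_j$ the smaller model space $\clk_{\vp_j}$ or $\clk_{\psi_j}$. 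Thus $\dim(\clq_{\Phi_\Lambda}\cap\clq_{\Psi_\Gamma})=\prod_{j=1}^n\dim(A_j\cap B_j)$, and since each factor is the model space of a non-constant inner function it is at least one-dimensional; in particular the intersection is nonzero, so $P\neq 0$. Hence the product of dimensions is finite if and only if every factor is finite-dimensional. Invoking the classical fact that $\dim\clk_\theta<\infty$ precisely when $\theta$ is a finite Blaschke product (its dimension then being the degree) converts ``each factor finite'' into exactly the stated conditions: $\vp_i$ a finite Blaschke product for $i\in\Lambda\setminus(\Lambda\cap\Gamma)$, $\psi_j$ a finite Blaschke product for $j\in\Gamma\setminus(\Lambda\cap\Gamma)$, and for $j\in\Lambda\cap\Gamma$ the requirement that the divisor among $\vp_j,\psi_j$ be a finite Blaschke product, i.e. (a) or (b).

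The step I expect to be the main obstacle is the clean justification of the intersection-of-tensor-products identity $\bigotimes_j A_j\cap\bigotimes_j B_j=\bigotimes_j(A_j\cap B_j)$: one must argue that $\bigotimes_j(P_{A_j}P_{B_j})$ is a genuine orthogonal projection whose range is the intersection, rather than manipulate the identity purely formally. This is precisely the point where the hypothesis $\Lambda\cup\Gamma=\{1,\ldots,n\}$ enters, since a single free variable would contribute an infinite-dimensional factor $H^2(\D_{z_k})$ and force $P$ to have infinite rank whenever it is nonzero, collapsing the characterization. The remaining ingredients—that commuting projections yield the projection onto the intersection, and the Blaschke-product dimension criterion for model spaces—are standard, so the argument should go through once the tensor bookkeeping is set up carefully.
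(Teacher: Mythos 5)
Your proposal is correct and follows essentially the same route as the paper: both reduce ``$P$ is a projection'' to $[P_{\cls_{\Phi_{\Lambda}}},P_{\cls_{\Psi_{\Gamma}}}]=0$ and invoke Theorem \ref{comm_projn} to get the divisibility dichotomy, and both then factor $P$ variable-by-variable (your tensor decomposition $\bigotimes_j P_{A_j\cap B_j}$ is exactly the paper's product formula over disjoint variables) and finish with the fact that a one-variable model space is finite-dimensional precisely when the inner function is a finite Blaschke product. Your explicit intersection/dimension-count bookkeeping is just a more spelled-out version of the paper's observation that the product is finite-rank iff each (nonzero) factor is.
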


\section{Properties of submodule on the polydisc}\label{Douglas}
This section aims to develop several results on general submodules of $H^2(\D^n)$.  Let us begin by looking into the following question: \textit{when does the condition $\cls \ominus z_i \cls \perp \cls \ominus z_j \cls $ holds for all distinct $i,j \in \{1,\ldots,n\}$?} Since the tuple of restriction operators $(R_{z_1},\dots, R_{z_n})$ form a commuting tuple of isometries the condition can be restated into the following $$(I_{\cls} - R_{z_i}R_{z_i}^*)(I_{\cls} - R_{z_j}R_{z_j}^*)=0.$$
Before answering this question, let us note a useful and well-known result on reducing subspaces of $H^2(\D^n)$. Recall that a subspace $\cls \subseteq H^2(\D^n)$ is called $M_{\bm{z}}$-reducing if $M_{z_i} \cls \subseteq \cls$, and $M_{z_i}^* \cls \subseteq \cls$ hold for all $i \in \{1,\ldots,n\}$.
\begin{prop}\label{reducing}
The only $M_{\bm{z}}$-reducing closed subspaces of $H^2(\D^n)$ are the trivial ones.
\end{prop}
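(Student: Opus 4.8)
The plan is to work with the orthogonal projection $P := P_{\cls}$ and to use that $\cls$ being $M_{\bm z}$-reducing is exactly the statement that $P$ commutes with every $M_{z_i}$ and every $M_{z_i}^*$. Assuming $\cls \neq \{0\}$, I would show that $P = I_{H^2(\D^n)}$, so that $\cls = H^2(\D^n)$; this leaves only the two trivial subspaces.

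First I would record that the monomials $\{z_1^{\alpha_1}\cdots z_n^{\alpha_n} : \alpha \in \mathbb{Z}_{\geq 0}^n\}$ form an orthonormal basis of $H^2(\D^n)$ and that each is obtained as $M_{\bm z}^{\alpha} 1$, where $1$ denotes the constant function. Setting $g := P1 \in \cls$ and using that $P$ commutes with each $M_{z_i}$, hence with every monomial $M_{\bm z}^{\alpha}$, I obtain $P(z_1^{\alpha_1}\cdots z_n^{\alpha_n}) = M_{\bm z}^{\alpha}P1 = (z_1^{\alpha_1}\cdots z_n^{\alpha_n})\, g$ for all $\alpha$. Thus the action of $P$ on the entire basis is determined by the single function $g$.

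The main step, and the one carrying the real content, is to show that $g$ is constant. Here I would invoke the self-adjointness of $P$: comparing $\langle P z^{\alpha}, z^{\beta}\rangle = \langle z^{\alpha} g, z^{\beta}\rangle$ with $\langle z^{\alpha}, P z^{\beta}\rangle = \langle z^{\alpha}, z^{\beta} g\rangle$, and expanding both in terms of the Taylor coefficients $(g_\gamma)_{\gamma \geq 0}$ of $g$ using the orthonormal basis. The identity $\langle P z^{\alpha}, z^{\beta}\rangle = \langle z^{\alpha}, P z^{\beta}\rangle$ then forces $g_\gamma = 0$ for every $\gamma \neq 0$: taking $\alpha = 0$ and $\beta = \gamma$ makes one side equal to $g_\gamma$ while the other vanishes (since $0 \geq \gamma$ fails for $\gamma \neq 0$ in the coordinatewise order). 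Hence $g = g_0$ is a constant, say $c$.

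Finally, $P(z^{\alpha}) = c\, z^{\alpha}$ on a basis yields $P = cI$, and since $P$ is a nonzero orthogonal projection we get $c = 1$, so $\cls = H^2(\D^n)$. I expect the only delicate point to be the bookkeeping with the coordinatewise partial order on multi-indices in the coefficient comparison. One can sidestep this altogether by using the identification $H^2(\D^n) \cong H^2(\D)^{\otimes n}$ under which $M_{z_i}$ is the $i$-th ampliation of the unilateral shift, and then appealing to the classical irreducibility of the shift together with the fact that a tensor product of irreducible families of operators remains irreducible.
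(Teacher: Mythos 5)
Your proof is correct. It shares its pivot with the paper's own proof --- both arguments come down to showing that $P_{\cls}1$ is a constant and then letting the density of the polynomials finish the job --- but the mechanism for getting constancy is genuinely different. The paper uses the operator identity $\prod_{i=1}^{n}\big(I_{H^2(\D^n)}-M_{z_i}M_{z_i}^*\big)=P_{\mathbb{C}}$: reducibility lets $P_{\cls}$ commute past this product of defect projections, whence $P_{\mathbb{C}}P_{\cls}1=P_{\cls}1$, so $P_{\cls}1=\lambda\in\{0,1\}$, and the two cases $1\in\cls^{\perp}$ and $1\in\cls$ are then handled separately (each using shift-invariance plus density of polynomials). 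You never touch the defect-product identity; instead you use commutation with the monomials to get $P_{\cls}\,z_1^{\alpha_1}\cdots z_n^{\alpha_n}=z_1^{\alpha_1}\cdots z_n^{\alpha_n}\,g$ with $g=P_{\cls}1$, and then self-adjointness of $P_{\cls}$, read off against the orthonormal basis of monomials, forces every nonzero Taylor coefficient of $g$ to vanish. What your route buys is a cleaner endgame: $P_{\cls}=cI$ on the dense span of monomials, hence everywhere, so $c\in\{0,1\}$ and no case analysis on where $1$ lives is needed. What the paper's route buys is that the single identity $\prod_i(I-M_{z_i}M_{z_i}^*)=P_{\mathbb{C}}$ absorbs all the multi-index bookkeeping that your coefficient comparison does by hand; it is also the identity the paper reuses elsewhere, which is presumably why it is the chosen tool. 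Your suggested alternative via irreducibility of the unilateral shift and tensor products is likewise valid, but note that the step ``a tensor product of irreducible families is irreducible'' rests on the tensor commutation theorem $\clb(\clh_1)\,\bar{\otimes}\,\clb(\clh_2)=\clb(\clh_1\otimes\clh_2)$ for von Neumann algebras, which is considerably heavier machinery than this elementary proposition requires.
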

\begin{proof}
It can be immediately observed that $\cls$ is $M_{\bm{z}}$-reducing implies that 
\[\underset{i=1}{\overset{n}{\Pi}} (I_{H^2(\D^n)} - M_{z_i} M_{z_i}^*) P_{\cls} = P_{\cls} \underset{i=1}{\overset{n}{\Pi}} (I_{H^2(\D^n)} - M_{z_i} M_{z_i}^*).
\]
Since $\underset{i=1}{\overset{n}{\Pi}} (I_{H^2(\D^n)} - M_{z_i} M_{z_i}^*) = P_{\mathbb{C}}$, we get $P_{\mathbb{C}} P_{\cls} = P_{\cls} P_{\mathbb{C}}$, and therefore, $P_{\mathbb{C}} P_{\cls} 1= P_{\cls} 1$.  Hence, $P_{\cls} 1 \in \mathbb{C}$ and thus, there exists $\lambda \in \mathbb{C}$ such that $P_{\cls} 1 = \lambda$. Since $P_{\mathbb{C}}$ is a projection, $\lambda \in \{0,1\}$. If $\lambda = 0$, then it means $1 \in \cls^{\perp}$, which will imply $\cls^{\perp} = H^2(\D^n)$. In other words, $\cls = \{0\}$. In the other case, when $\lambda = 1$, it will imply that $P_{\cls} 1 = 1$ and thus, $1 \in \cls$. This will again imply that $\cls = H^2(\D^n)$. This completes the proof.
\end{proof}
Now we are ready to answer the question at this section's beginning.
\begin{prop}\label{zerocondn}
Let  $\cls$ submodule of $H^2(\D^n)$. Then \begin{equation}\label{e1}
(I_{\cls} - R_{z_i}R_{z_i}^*)(I_{\cls} -R_{z_j}R_{z_j}^*)=0
\end{equation}
for all distinct $i,j\in \{1,\dots,n\}$ if and only if $\cls=\{0\}$.
\end{prop}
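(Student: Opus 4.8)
The forward direction is immediate: if $\cls = \{0\}$ then every operator in the displayed identities is the zero operator, so all the products vanish. Hence the content lies in the converse, for which I would assume $n \geq 2$ (so that distinct pairs exist) and that the products vanish for all distinct $i,j$. The plan is to reduce everything to a single fixed pair and a geometric reading of the hypothesis.

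First I would reinterpret the condition geometrically. Since each $R_{z_i}$ is an isometry on $\cls$, the operator $R_{z_i}R_{z_i}^*$ is the orthogonal projection of $\cls$ onto $\mathrm{ran}\,R_{z_i} = z_i\cls$ (a closed subspace, as $M_{z_i}$ is bounded below), so $Q_i := I_{\cls} - R_{z_i}R_{z_i}^*$ is the orthogonal projection onto the wandering subspace $\cls_i := \cls \ominus z_i\cls$. Because $Q_i$ and $Q_j$ are self-adjoint projections, the hypothesis $Q_iQ_j = 0$ is equivalent to $\cls_i \perp \cls_j$. I would also record the single-variable Wold decomposition: as noted in the introduction each $R_{z_i}$ is a pure isometry on $\cls$, so $\cls = \bigoplus_{m\geq 0} z_i^m \cls_i$ for every $i$; in particular $\cls = \cls_1 \oplus z_1\cls$.

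The crux is then the containment $\cls_2 \subseteq z_1\cls$: any $h \in \cls_2$ is orthogonal to $\cls_1$ (by $\cls_1 \perp \cls_2$), so by the variable-$1$ splitting $\cls = \cls_1 \oplus z_1\cls$ it must lie in $z_1\cls$. Next, since the shifts commute and $\cls$ is a submodule, $z_2^m\cls_2 \subseteq z_2^m z_1\cls = z_1 z_2^m\cls \subseteq z_1\cls$ for every $m \geq 0$. Now I invoke the variable-$2$ Wold decomposition, which exhibits $\cls$ as the closed linear span of the subspaces $z_2^m\cls_2$; since each of these sits inside the \emph{closed} subspace $z_1\cls$, so does their closed span, giving $\cls \subseteq z_1\cls$ and hence $\cls = z_1\cls$. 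Finally, purity of $R_{z_1}$ forces $\cls = \bigcap_{m} z_1^m\cls = \{0\}$, completing the argument.

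The one step I expect to require care — and the main obstacle — is the passage from ``$z_2^m\cls_2 \subseteq z_1\cls$ for each $m$'' to ``$\cls \subseteq z_1\cls$''. The naive route of factoring a convergent Wold series $\sum_m z_2^m b_m$ through $M_{z_1}$ termwise is problematic, since the preimages under $M_{z_1}$ need not reassemble into a convergent series. Exploiting instead that $z_1\cls$ is closed, together with the fact that the Wold decomposition already presents $\cls$ as the closed span of the $z_2^m\cls_2$, sidesteps this cleanly. The remaining verifications — that $\mathrm{ran}(R_{z_i}R_{z_i}^*) = z_i\cls$, that $Q_iQ_j = 0 \Leftrightarrow \cls_i \perp \cls_j$, and the pure-isometry Wold decomposition — are routine.
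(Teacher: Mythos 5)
Your proof is correct, and its first half coincides with the paper's own argument: both derive $\cls\ominus z_2\cls\subseteq z_1\cls$ from the vanishing product (you phrase this as orthogonality of the wandering subspaces together with $\cls=(\cls\ominus z_1\cls)\oplus z_1\cls$; the paper reads it as a range--kernel inclusion, which is the same thing), both then push forward to $z_2^m(\cls\ominus z_2\cls)\subseteq z_1\cls$ using that $\cls$ is a submodule, and both use the Wold decomposition of the pure isometry $R_{z_2}$ together with the closedness of $z_1\cls$ to conclude $\cls\subseteq z_1\cls$. Where you diverge is the endgame. The paper converts $\cls\subseteq z_i\cls$ into $M_{z_i}^*\cls\subseteq\cls$, invokes the hypothesis for \emph{all} pairs to make $\cls$ reducing for every coordinate shift, applies Proposition \ref{reducing} (reducing subspaces are trivial), and then must separately rule out $\cls=H^2(\D^n)$ by a short contradiction. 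You instead note that $z_1\cls\subseteq\cls\subseteq z_1\cls$ gives $\cls=z_1\cls$, hence $\cls=z_1^m\cls$ for every $m$, and purity of $R_{z_1}$ (equivalently, $\bigcap_m z_1^m H^2(\D^n)=\{0\}$) forces $\cls=\{0\}$. This is shorter --- it bypasses Proposition \ref{reducing} and the final case analysis entirely --- and it buys something real: your argument uses the hypothesis for a \emph{single} pair of distinct indices, so it proves the stronger statement that $\cls\ominus z_i\cls\perp\cls\ominus z_j\cls$ for even one pair $i\neq j$ already forces $\cls=\{0\}$, whereas the paper's reducing-subspace route genuinely needs all pairs. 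This also sharpens the remark following the proposition in the introduction: for a non-zero submodule, \emph{every} pair of distinct indices, not merely some pair, has non-orthogonal wandering subspaces.
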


\begin{proof}
Fix any distinct $i,j \in \{1,\dots,n\}$. Then $(I_{\cls} - R_{z_i}R_{z_i}^*)(I_{\cls} - R_{z_j}R_{z_j}^*)=0$ implies 
\[
(I_{\cls} - R_{z_j}R_{z_j}^*)\cls \subseteq \text{ker}(I_{\cls} - R_{z_i}R_{z_i}^*),
\]
that is
\[
\cls\ominus z_j \cls\subseteq z_i\cls.
\]
Since for each $j$ we have $z_j^nz_i\cls=z_iz_j^n\cls \subseteq z_i\cls$, we can further deduce that
\[
z_j^n(\cls\ominus z_j \cls)\subseteq z_i\cls.
\]
From the Wold-von Neumann decomposition we get $\cls=\oplus_{j=0}^n z_j^n(\cls\ominus z_j \cls)$, and therefore, we obtain
$\cls\subseteq z_i\cls$. Acting on the left by $M_{z_i}^*$ on both sides we get $M_{z_i}^*\cls\subseteq \cls$. By considering the adjoint of condition (\ref{e1}), we have $(I_{\cls} - R_{z_j}R_{z_j}^*)(I_{\cls} - R_{z_i}R_{z_i}^*)=0$. So by a similar technique, we conclude   $M_{z_j}^*\cls\subseteq \cls$. Since $i,j$ were fixed but arbitrary elements in $\{1,\ldots,n\}$ the submodule $\cls$ must be $M_{\bm{z}}$-reducing and hence, using Proposition \ref{reducing}, we get
\[
\cls=\{0\} \quad\text{or}\quad H^2(\D^n).
\]
Now if $\cls=H^2(\D^n)$, then
\[
0=(I_{\cls} - R_{z_i}R_{z_i}^*)(I_{\cls} - R_{z_j}R_{z_j}^*)=(I_{H^2(\D^n)} - M_{z_i}M_{z_i}^*)(I_{H^2(\D^n)} - M_{z_j}M_{z_j}^*).
\]
This further implies that either $(I_{H^2(\D^n)}-M_{z_i}M_{z_i}^*)=0$ or $(I_{H^2(\D^n)}-M_{z_j}M_{z_j}^*)=0$. This is a contradiction  because $M_{z_i}$ is never a unitary for any $i \in \{1,\ldots,n\}$. Thus the only possibility is $\cls=\{0\}$. This completes the proof.
\end{proof}

Let us now establish several results useful for later sections. In the following $\clq:= H^2(\D^n) \ominus \cls$ is the corresponding quotient module. It is evident from the definition that $\clq$ is $(M_{z_1}^*, \ldots, M_{z_n}^*)$-joint invariant. We begin by observing the following simple but useful identity. 

\begin{lem}\label{commutator rest}
Let $\cls$ be a submodule of $H^2(\D^n)$. Then 
\[
[R_{z_j}^*, R_{z_i}]=P_{\cls}M_{z_i}P_{\clq}M_{z_j}^*|_{\cls}, \quad (i\neq j).
\]
for all distinct $i,j \in \{1,\dots, n\}$. 
\end{lem}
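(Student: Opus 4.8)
The plan is to prove the identity $[R_{z_j}^*, R_{z_i}] = P_{\cls} M_{z_i} P_{\clq} M_{z_j}^*|_{\cls}$ by unfolding the definitions of the restriction operators and exploiting the crucial fact that $M_{z_i}$ and $M_{z_j}$ commute on the full space $H^2(\D^n)$. First I would recall that $R_{z_i} = M_{z_i}|_{\cls}$, and that its adjoint (as an operator on $\cls$) is given by $R_{z_i}^* = P_{\cls} M_{z_i}^*|_{\cls}$, since for $f, g \in \cls$ we have $\la M_{z_i} f, g\ra = \la f, M_{z_i}^* g\ra = \la f, P_{\cls}M_{z_i}^* g\ra$. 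The main computation is then to expand the commutator. Writing everything as operators restricted to $\cls$, I would compute
\[
R_{z_j}^* R_{z_i} = P_{\cls} M_{z_j}^* P_{\cls} M_{z_i}|_{\cls}, \qquad R_{z_i} R_{z_j}^* = M_{z_i} P_{\cls} M_{z_j}^*|_{\cls},
\]
where in the second expression I have used that $R_{z_i}$ acts as $M_{z_i}$ on vectors already lying in $\cls$, and that the range of $R_{z_j}^*$ sits inside $\cls$.

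The key algebraic step is to replace the interior projection $P_{\cls}$ by $I_{H^2(\D^n)} - P_{\clq}$ in the first term. Since $M_{z_i}$ maps $\cls$ into $\cls$, for any $f \in \cls$ the vector $M_{z_i} f$ already lies in $\cls$, so $P_{\cls} M_{z_i}|_{\cls} = M_{z_i}|_{\cls}$; thus the first term simplifies to $P_{\cls} M_{z_j}^* M_{z_i}|_{\cls}$. Using the commutativity $M_{z_j}^* M_{z_i} = M_{z_i} M_{z_j}^*$ on $H^2(\D^n)$ (the shifts commute, hence so do $M_{z_i}$ and $M_{z_j}^*$ for $i \neq j$), this becomes $P_{\cls} M_{z_i} M_{z_j}^*|_{\cls}$. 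Subtracting, I would obtain
\[
[R_{z_j}^*, R_{z_i}] = P_{\cls} M_{z_i} M_{z_j}^*|_{\cls} - M_{z_i} P_{\cls} M_{z_j}^*|_{\cls} = P_{\cls} M_{z_i}(I_{H^2(\D^n)} - P_{\cls}) M_{z_j}^*|_{\cls},
\]
where the second term absorbs cleanly because $M_{z_i} P_{\cls} M_{z_j}^*|_{\cls}$ has range inside $\cls$, so applying $P_{\cls}$ to it changes nothing. Recognizing $I_{H^2(\D^n)} - P_{\cls} = P_{\clq}$ gives exactly the claimed identity.

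I expect the main subtlety — rather than a genuine obstacle — to be bookkeeping about the domains and the placement of projections, in particular being careful that $R_{z_j}^*$ denotes the adjoint taken within $\clb(\cls)$ (so that its formula carries an internal $P_{\cls}$), while $M_{z_j}^*$ denotes the adjoint on the full space. The one place where I must verify a nontrivial fact is the commutativity $M_{z_i} M_{z_j}^* = M_{z_j}^* M_{z_i}$ for distinct $i,j$; this is standard for the commuting tuple of shifts on $H^2(\D^n)$ and follows since $M_{z_i}, M_{z_j}$ are commuting isometries with orthogonal ranges of multiplication in distinct variables, but it is the only structural input beyond pure projection algebra. Once that is in hand, the identity follows from a short and essentially formal manipulation.
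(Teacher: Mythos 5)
Your proof is correct and follows essentially the same route as the paper's: expand $R_{z_j}^*R_{z_i}$ and $R_{z_i}R_{z_j}^*$ via $R_{z_i}=M_{z_i}|_{\cls}$ and $R_{z_j}^*=P_{\cls}M_{z_j}^*|_{\cls}$, drop the interior projection using $M_{z_i}\cls\subseteq\cls$, invoke the doubly commuting relation $M_{z_j}^*M_{z_i}=M_{z_i}M_{z_j}^*$ for $i\neq j$, and collect the difference into $P_{\cls}M_{z_i}(I_{H^2(\D^n)}-P_{\cls})M_{z_j}^*|_{\cls}=P_{\cls}M_{z_i}P_{\clq}M_{z_j}^*|_{\cls}$. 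The paper's computation is just a condensed version of exactly this argument, leaving the commutation step and the projection bookkeeping implicit, which you have spelled out correctly.
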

\begin{proof}
Fix distinct $i, j \in \{1,\dots,n\}$.	We consider
\[
\begin{split}
[R_{z_j}^*, R_{z_i}]
&=R_{z_j}^* R_{z_i}-R_{z_i}R_{z_j}^*
\\
&=P_{\cls} M_{z_j}^*M_{z_i}|_{\cls}-P_{\cls}M_{z_i}P_{\cls}M_{z_j}^*|_{\cls}
\\
&=P_{\cls}M_{z_i}P_{\clq}M_{z_j}^*|_{\cls}.
\end{split}
\]
\end{proof}

The following results show how the evaluation operators on submodules are related to the restriction operators. 

\begin{lem}\label{ABC}
Let $\cls$ be a submodule of $H^2(\D^n)$, then
\[
I_{\cls}-R_{z_j}R_{z_j}^*-E_{z_j}E_{z_j}^*=P_{\cls}M_{z_j}P_{\clq}M^*_{z_j}|_{\cls}, \quad (j \in \{1,\dots,n\}).
\]
\end{lem}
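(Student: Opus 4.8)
The plan is to reduce the whole identity to the single self-adjoint projection that the evaluation operator secretly is. First I would observe that on the full space $ev_{j,0}$ coincides with $I_{H^2(\D^n)}-M_{z_j}M_{z_j}^*$: since $M_{z_j}$ is an isometry, $I_{H^2(\D^n)}-M_{z_j}M_{z_j}^*$ is the orthogonal projection onto $(\mbox{ran}\, M_{z_j})^{\perp}=\ker M_{z_j}^*$, which is exactly the subspace of functions independent of $z_j$, and $ev_{j,0}$ is the orthogonal projection onto that same subspace. In particular $ev_{j,0}$ is self-adjoint and idempotent, and this is the fact that makes the argument short.

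Next I would pin down $E_{z_j}E_{z_j}^*$ as an operator on $\cls$. Viewing $E_{z_j}=P_{\cls}\,ev_{j,0}:H^2(\D^n)\raro\cls$, a one-line adjoint computation gives $E_{z_j}^*=ev_{j,0}|_{\cls}$, and hence, using $ev_{j,0}^2=ev_{j,0}$,
\[
E_{z_j}E_{z_j}^*=P_{\cls}\,ev_{j,0}\,ev_{j,0}|_{\cls}=P_{\cls}\,ev_{j,0}|_{\cls}=P_{\cls}(I_{H^2(\D^n)}-M_{z_j}M_{z_j}^*)|_{\cls}.
\]
In parallel I would record the standard identities $R_{z_j}^*=P_{\cls}M_{z_j}^*|_{\cls}$ and $R_{z_j}R_{z_j}^*=M_{z_j}P_{\cls}M_{z_j}^*|_{\cls}$ (the latter using only that $P_{\cls}M_{z_j}^*g\in\cls$ and $M_{z_j}\cls\subseteq\cls$), together with the observation that, because $R_{z_j}R_{z_j}^*$ already maps into $\cls$, one has $P_{\cls}M_{z_j}P_{\cls}M_{z_j}^*|_{\cls}=R_{z_j}R_{z_j}^*$.

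Then the conclusion follows by expanding the right-hand side with $P_{\clq}=I_{H^2(\D^n)}-P_{\cls}$:
\[
P_{\cls}M_{z_j}P_{\clq}M_{z_j}^*|_{\cls}=P_{\cls}M_{z_j}M_{z_j}^*|_{\cls}-P_{\cls}M_{z_j}P_{\cls}M_{z_j}^*|_{\cls}=P_{\cls}M_{z_j}M_{z_j}^*|_{\cls}-R_{z_j}R_{z_j}^*.
\]
Substituting $P_{\cls}M_{z_j}M_{z_j}^*|_{\cls}=P_{\cls}|_{\cls}-P_{\cls}(I_{H^2(\D^n)}-M_{z_j}M_{z_j}^*)|_{\cls}=I_{\cls}-E_{z_j}E_{z_j}^*$ gives exactly $I_{\cls}-R_{z_j}R_{z_j}^*-E_{z_j}E_{z_j}^*$, as claimed.

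I do not expect a genuine obstacle: this is a bookkeeping identity. The only points requiring care are the handling of domains and adjoints, specifically the identification $E_{z_j}^*=ev_{j,0}|_{\cls}$ (so that $E_{z_j}E_{z_j}^*$ collapses via idempotency of $ev_{j,0}$) and the collapse of the compression $P_{\cls}M_{z_j}P_{\cls}M_{z_j}^*|_{\cls}$ to $R_{z_j}R_{z_j}^*$. Conceptually this is the \emph{diagonal} companion of Lemma \ref{commutator rest}: formally setting $i=j$ there, the off-diagonal computation used $M_{z_j}^*M_{z_i}=M_{z_i}M_{z_j}^*$, whereas for $i=j$ one instead has $M_{z_j}^*M_{z_j}=I$, and the resulting discrepancy $P_{\cls}(I_{H^2(\D^n)}-M_{z_j}M_{z_j}^*)|_{\cls}$ is precisely the extra term $E_{z_j}E_{z_j}^*$. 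This viewpoint both predicts the presence of the correction term and furnishes an alternative route to the proof.
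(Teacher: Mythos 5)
Your proposal is correct and follows essentially the same route as the paper's proof: both rest on the identification $ev_{j,0}=I_{H^2(\D^n)}-M_{z_j}M_{z_j}^*$, the computation $E_{z_j}E_{z_j}^*=P_{\cls}(I_{H^2(\D^n)}-M_{z_j}M_{z_j}^*)|_{\cls}$, and the expansion $P_{\clq}=I_{H^2(\D^n)}-P_{\cls}$ to rearrange the terms (the paper reads the computation from left to right, you read it from right to left). Your added care about $E_{z_j}^*=ev_{j,0}|_{\cls}$ and the closing remark viewing the lemma as the diagonal case of Lemma \ref{commutator rest} are sound but do not change the substance of the argument.
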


\begin{proof}
First note that by the definition, $ev_{z_j}= I_{H^2(\D^n)} - M_{z_j}M_{z_j}^*$ for all $j \in \{1,\ldots,n\}$. Then 
\[
E_{z_j}E_{z_j}^*=P_{\cls} \big( I_{H^2(\D^n)} - M_{z_j}M_{z_j}^* \big) P_{\cls} = P_{\cls} -  P_{\cls}  M_{z_j}M_{z_j}^* P_{\cls} .
\]
So, we obtain
\[
I_{\cls}-R_{z_j}R_{z_j}^*-E_{z_j}E_{z_j}^*=P_{\cls}M_{z_j}M_{z_j}^*|_{\cls}-P_{\cls}M_{z_j}P_{\cls}M_{z_j}^*|_{\cls}=P_{\cls}M_{z_j}P_{\clq}M^*_{z_j}|_{\cls}.
\]
\end{proof}

As a consequence of this identity, we can further establish that the defects of the restriction and evaluation operators commute with each other.

\begin{lem}\label{1st factor}
Let $\cls$ be a submodule of $H^2(\D^n)$. Then for each $j \in \{1,\dots,n\}$, we get
\[
(I_{\cls} - R_{z_j}R_{z_j}^*) (I_{\cls} - E_{z_j}E_{z_j}^*)
= I_{\cls} - R_{z_j}R_{z_j}^*-E_{z_j}E_{z_j}^*.
\]
\end{lem}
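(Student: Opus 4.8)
The plan is to expand the product on the left-hand side and reduce the identity to the vanishing of a single cross term. Writing $A := I_{\cls} - R_{z_j}R_{z_j}^*$ and $B := I_{\cls} - E_{z_j}E_{z_j}^*$, a direct expansion gives
\[
AB = I_{\cls} - R_{z_j}R_{z_j}^* - E_{z_j}E_{z_j}^* + R_{z_j}R_{z_j}^* E_{z_j}E_{z_j}^*,
\]
so the asserted identity is equivalent to the single operator equation $R_{z_j}R_{z_j}^* E_{z_j}E_{z_j}^* = 0$. Thus everything reduces to understanding how the defect range of $R_{z_j}$ interacts with the range of the evaluation operator $E_{z_j}$.

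The key step I would establish is the orthogonality relation $R_{z_j}^* E_{z_j} = 0$ on $\cls$. To see this, first note that $E_{z_j}$ is self-adjoint as an operator on $\cls$, being the compression $P_{\cls}\, ev_{z_j}|_{\cls}$ of the self-adjoint projection $ev_{z_j} = I_{H^2(\D^n)} - M_{z_j}M_{z_j}^*$ to $\cls$. It therefore suffices to check $E_{z_j}R_{z_j} = 0$. For $f \in \cls$ we have $R_{z_j}f = M_{z_j}f = z_j f \in z_j H^2(\D^n)$, and since $ev_{z_j}$ is precisely the orthogonal projection onto $H^2(\D^n)\ominus z_j H^2(\D^n)$, it annihilates $z_j f$. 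Hence $E_{z_j}R_{z_j}f = P_{\cls}\, ev_{z_j}(z_j f) = 0$. Taking adjoints and using $E_{z_j}^* = E_{z_j}$ yields $R_{z_j}^* E_{z_j} = 0$, as claimed.

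Finally, from $R_{z_j}^* E_{z_j} = 0$ I get $\mathrm{ran}(E_{z_j}E_{z_j}^*) \subseteq \mathrm{ran}(E_{z_j}) \subseteq \ker R_{z_j}^*$, so that $R_{z_j}^* E_{z_j}E_{z_j}^* = 0$ and hence $R_{z_j}R_{z_j}^* E_{z_j}E_{z_j}^* = 0$, completing the reduction. I do not expect a serious obstacle here: the only points requiring care are the bookkeeping of domains (all operators act on $\cls$, not on $H^2(\D^n)$) and the self-adjointness of $E_{z_j}$, which is what turns the one-sided computation $E_{z_j}R_{z_j}=0$ into the relation $R_{z_j}^* E_{z_j}=0$ that the cross term actually needs. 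Alternatively, one could substitute the formula from Lemma \ref{ABC} for $I_{\cls}-R_{z_j}R_{z_j}^*-E_{z_j}E_{z_j}^*$, but the cross-term computation above seems the cleanest route.
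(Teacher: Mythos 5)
Your proposal is correct; every step checks out, including the two points you flag yourself (that $E_{z_j}$ is self-adjoint only as an operator on $\cls$, not on $H^2(\D^n)$, and the bookkeeping of domains). It is, however, organized genuinely differently from the paper's proof. The paper argues by direct expansion: it uses Lemma \ref{ABC} to write $I_{\cls}-E_{z_j}E_{z_j}^*=P_{\cls}M_{z_j}M_{z_j}^*|_{\cls}$, multiplies out the product using the module relations $P_{\cls}M_{z_j}^*P_{\cls}=P_{\cls}M_{z_j}^*$ (valid because $\clq$ is $M_{z_j}^*$-invariant) and $M_{z_j}^*M_{z_j}=I$, arrives at the operator $P_{\cls}M_{z_j}P_{\clq}M_{z_j}^*|_{\cls}$, and then cites Lemma \ref{ABC} once more to identify this with $I_{\cls}-R_{z_j}R_{z_j}^*-E_{z_j}E_{z_j}^*$. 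You bypass Lemma \ref{ABC} and the quotient projection $P_{\clq}$ entirely: you reduce the claimed identity to the vanishing of the single cross term $R_{z_j}R_{z_j}^*E_{z_j}E_{z_j}^*$ and kill that term with the orthogonality relation $R_{z_j}^*E_{z_j}=0$, which follows from $ev_{z_j}\circ M_{z_j}=0$ together with self-adjointness of the compressed evaluation. Each route buys something: the paper's computation exhibits the common value of both sides as the concrete operator $P_{\cls}M_{z_j}P_{\clq}M_{z_j}^*|_{\cls}$, which is precisely the form reused repeatedly afterwards (e.g., in Lemma \ref{submodule factor} and Proposition \ref{square}), so the paper gets the representation it needs later for free; your argument is more elementary and self-contained, makes the geometric content visible ($\text{ran}\, E_{z_j}\subseteq \cls\ominus z_j\cls$), and since taking adjoints in your reduction shows that $E_{z_j}E_{z_j}^*R_{z_j}R_{z_j}^*$ vanishes as well, the commutator identity $[(I_{\cls}-R_{z_j}R_{z_j}^*),(I_{\cls}-E_{z_j}E_{z_j}^*)]=0$, which the paper records as a consequence of the lemma, drops out of your proof at the same time.
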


\begin{proof}
For any $j \in \{1, \ldots, n\}$ we have
\[
\begin{split}
(I_{\cls} - R_{z_j}R_{z_j}^*)(I_{\cls} - E_{z_j}E_{z_j}^*)&=(I_{\cls} - P_{\cls}M_{z_j}P_{\cls}M_{z_j}^*|_{\cls})P_{\cls}M_{z_j}M_{z_j}^*|_{\cls}
\\
&=P_{\cls}M_{z_j}M_{z_j}^*|_{\cls} - P_{\cls}M_{z_j}P_{\cls}M_{z_j}^*P_{\cls}M_{z_j}M_{z_j}^*|_{\cls} \\
&=P_{\cls}M_{z_j}M_{z_j}^*|_{\cls} - P_{\cls}M_{z_j}P_{\cls}M_{z_j}^*M_{z_j}M_{z_j}^*|_{\cls} \\
&=P_{\cls}M_{z_j}M_{z_j}^*|_{\cls} - P_{\cls}M_{z_j}P_{\cls}M_{z_j}^*|_{\cls} \\
&=P_{\cls}M_{z_j}M_{z_j}^*|_{\cls} - P_{\cls}M_{z_j}(I_{H^2(\D^n)} - P_{\clq})M_{z_j}^*|_{\cls} \\
&= P_{\cls}M_{z_j} P_{\clq} M_{z_j}^*|_{\cls}.
\end{split}
\]
Using Lemma \ref{ABC} we get
\[
(I_{\cls} - R_{z_j}R_{z_j}^*)(I_{\cls} - E_{z_j}E_{z_j}^*)=I_{\cls}-R_{z_j}R_{z_j}^*-E_{z_j}E_{z_j}^*.
\]
This completes the proof.
\end{proof}
Since the right-hand side of the above result is a self-adjoint operator, we get
\[
[(I_{\cls} - R_{z_j}R_{z_j}^*),(I_{\cls} - E_{z_j}E_{z_j}^*)]=0 \quad (j \in \{1,\ldots,n\}).
\]
Let us now collect several results on cross-commutators of the restriction operators on submodules of $H^2(\D^n)$, which will play crucial roles in the later sections.  The Douglas factorization lemma \cite{Douglas} serves as an important tool in the proofs. 

\begin{lem}\label{submodule factor}
Let $\cls$ be a submodule of $H^2(\D^n)$. For distinct $i,j \in \{1,\dots,n\}$, we have
\begin{itemize}
	\item[(i)] $[R_{z_j}^*, R_{z_i}]=(I_{\cls}-R_{z_i}R_{z_i}^*-E_{z_i}E_{z_i}^*)^{1/2}A_{i,j}$ for some contraction $A_{i,j}$ on $\cls$.\vspace{1mm}
	\item[(ii)] $[R_{z_j}^*, R_{z_i}]=B_{i,j} (I_{\cls}-R_{z_j}R_{z_j}^*-E_{z_j}E_{z_j}^*)^{1/2}$ for some contraction $B_{i,j}$ on $\cls$.\vspace{1mm}
	\item[(iii)] $\overline{\text{ran}} [R_{z_j}^*, R_{z_i}]\subseteq 
	\overline{\text{ran}} (I_{\cls}-R_{z_i}R_{z_i}^*-E_{z_i}E_{z_i}^*)$.  \vspace{1mm}
	\item[(iv)] $\text{ker}(I_{\cls}-R_{z_j}R_{z_j}^*-E_{z_j}E_{z_j}^*)\subseteq \text{ker} [R_{z_j}^*, R_{z_i}]$.
\end{itemize}
\end{lem}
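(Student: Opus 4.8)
The plan is to prove all four assertions of Lemma \ref{submodule factor} by applying the Douglas factorization lemma to the identity from Lemma \ref{commutator rest} together with the defect formula from Lemma \ref{ABC}. First I would recall that Lemma \ref{commutator rest} gives $[R_{z_j}^*, R_{z_i}] = P_{\cls} M_{z_i} P_{\clq} M_{z_j}^*|_{\cls}$, and that Lemma \ref{ABC} identifies $I_{\cls} - R_{z_i}R_{z_i}^* - E_{z_i}E_{z_i}^* = P_{\cls} M_{z_i} P_{\clq} M_{z_i}^*|_{\cls}$ (and the analogous statement for the index $j$). The key observation is that each of these is built from the single operator $T_i := P_{\clq} M_{z_i}^*|_{\cls} : \cls \to \clq$, since $P_{\clq}$ is a projection and hence $P_{\clq} = P_{\clq}^* = P_{\clq}^2$. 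Explicitly, $[R_{z_j}^*, R_{z_i}] = T_i^* T_j$ (using $P_{\cls} M_{z_i} P_{\clq} = (P_{\clq} M_{z_i}^*|_{\cls})^* = T_i^*$), while the defect operator equals $T_i^* T_i$.

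For part (i), I would invoke Douglas's lemma in the form: if $XX^* \leq c\, YY^*$ then $X = Y C$ for a contraction $C$, or more directly the range-inclusion/factorization version. Concretely, I would compute $[R_{z_j}^*, R_{z_i}]\,[R_{z_j}^*, R_{z_i}]^* = T_i^* T_j T_j^* T_i^* {}^{*}$—more cleanly, $[R_{z_j}^*, R_{z_i}][R_{z_j}^*, R_{z_i}]^* = T_i^* T_j T_j^* T_i \leq \|T_j\|^2\, T_i^* T_i = \|T_j\|^2 (I_{\cls} - R_{z_i}R_{z_i}^* - E_{z_i}E_{z_i}^*)$, using $T_j T_j^* \leq \|T_j\|^2 I_{\clq} \leq I_{\clq}$ since $T_j$ is a contraction (being a compression of a contraction $M_{z_j}^*$). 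Douglas's lemma then yields $[R_{z_j}^*, R_{z_i}] = (I_{\cls} - R_{z_i}R_{z_i}^* - E_{z_i}E_{z_i}^*)^{1/2} A_{i,j}$ for a contraction $A_{i,j}$. For part (ii) I would take adjoints: $[R_{z_j}^*, R_{z_i}]^* = [R_{z_i}^*, R_{z_j}] = T_j^* T_i$, apply the part-(i) argument with the roles of $i$ and $j$ swapped to write $[R_{z_i}^*, R_{z_j}] = (I_{\cls} - R_{z_j}R_{z_j}^* - E_{z_j}E_{z_j}^*)^{1/2} A_{j,i}$, and then take adjoints back to obtain $[R_{z_j}^*, R_{z_i}] = A_{j,i}^* (I_{\cls} - R_{z_j}R_{z_j}^* - E_{z_j}E_{z_j}^*)^{1/2}$, so $B_{i,j} := A_{j,i}^*$ is the desired contraction.

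Parts (iii) and (iv) then follow immediately from (i) and (ii) as corollaries: the factorization in (i) gives $\overline{\text{ran}}\,[R_{z_j}^*, R_{z_i}] \subseteq \overline{\text{ran}}\,(I_{\cls} - R_{z_i}R_{z_i}^* - E_{z_i}E_{z_i}^*)^{1/2} = \overline{\text{ran}}\,(I_{\cls} - R_{z_i}R_{z_i}^* - E_{z_i}E_{z_i}^*)$, where the last equality uses that a positive operator and its square root have the same closed range. For (iv), since the defect operator is self-adjoint and positive, $\text{ker}(I_{\cls} - R_{z_j}R_{z_j}^* - E_{z_j}E_{z_j}^*) = \text{ker}(I_{\cls} - R_{z_j}R_{z_j}^* - E_{z_j}E_{z_j}^*)^{1/2}$, and the factorization from (ii) forces this kernel into $\text{ker}\,[R_{z_j}^*, R_{z_i}]$.

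I do not anticipate a serious obstacle, since the whole argument is a clean application of Douglas factorization once the two earlier lemmas are combined; the only point requiring care is bookkeeping with the index order, namely making sure that in (i) the defect carries the index $i$ (matching the range side $M_{z_i}$) while in (ii) it carries the index $j$ (matching the domain side $M_{z_j}^*$), and that the contraction estimate uses $\|T_i\|, \|T_j\| \leq 1$, which holds because each $T_i = P_{\clq}M_{z_i}^*|_{\cls}$ is a compression of the contraction $M_{z_i}^*$.
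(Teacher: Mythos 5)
Your proposal is correct and follows essentially the same route as the paper: both reduce the claim to the operator inequalities $[R_{z_j}^*,R_{z_i}][R_{z_j}^*,R_{z_i}]^*\le I_{\cls}-R_{z_i}R_{z_i}^*-E_{z_i}E_{z_i}^*$ and $[R_{z_j}^*,R_{z_i}]^*[R_{z_j}^*,R_{z_i}]\le I_{\cls}-R_{z_j}R_{z_j}^*-E_{z_j}E_{z_j}^*$ via Lemma \ref{commutator rest} and Lemma \ref{ABC}, then invoke Douglas's factorization, with (iii) and (iv) as immediate corollaries of (i) and (ii). The only cosmetic differences are that you obtain the inequality abstractly from $T_jT_j^*\le I_{\clq}$ (with $T_j=P_{\clq}M_{z_j}^*|_{\cls}$) where the paper computes the difference exactly as the manifestly positive operator $P_{\cls}M_{z_i}M_{z_j}^*P_{\clq}M_{z_j}M_{z_i}^*P_{\cls}$, and that you deduce (ii) from (i) by taking adjoints and swapping indices rather than by a second direct computation.
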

\begin{proof} 
		Fix any distinct $i,j \in \{1,\dots,n\}$, then
		\[
		\begin{split}
&(I_{\cls}-R_{z_i}R_{z_i}^*-E_{z_i}E_{z_i}^*)-[R_{z_j}^*, R_{z_i}][R_{z_j}^*, R_{z_i}]^*\\&=P_{\cls}M_{z_i}P_{\clq}M^*_{z_i}P_{\cls}-P_{\cls}M_{z_i}P_{\clq}M_{z_j}^*P_{\cls}M_{z_j}P_{\clq}M_{z_i}^*P_{\cls}
\\
&=P_{\cls}M_{z_i}P_{\clq}(I_{H^2(\D^n)}-M_{z_j}^*P_{\cls}M_{z_j})P_{\clq}M_{z_i}^*P_{\cls}
\\
&=P_{\cls}M_{z_i}P_{\clq}M_{z_j}^*P_{\clq}M_{z_j}P_{\clq}M_{z_i}^*P_{\cls}
\\
&=P_{\cls}M_{z_i}M_{z_j}^*P_{\clq}M_{z_j}M_{z_i}^*P_{\cls} \geq 0.
		\end{split}
		\]
Furthermore, from Lemma \ref{ABC} we know $(I_{\cls}-R_{z_i}R_{z_i}^*-E_{z_i}E_{z_i}^*)$ is a non-negative operator. Hence, by Douglas's range inclusion theorem there exists a contraction $A_{i,j}$ on $\cls$ such that
	\[
	[R_{z_j}^*, R_{z_i}]=(I_{\cls}-R_{z_i}R_{z_i}^*-E_{z_i}E_{z_i}^*)^{1/2}A_{i,j}.
	\]
	To prove part (ii), we consider
	\[
	\begin{split}
&(I_{\cls}-R_{z_j}R_{z_j}^*-E_{z_j}E_{z_j}^*)-[R_{z_j}^*, R_{z_i}]^*[R_{z_j}^*, R_{z_i}]\\&=P_{\cls}M_{z_j}P_{\clq}M^*_{z_j}P_{\cls}-P_{\cls}M_{z_j}P_{\clq}M_{z_i}^*P_{\cls}M_{z_i}P_{\clq}M_{z_j}^*P_{\cls}
\\
&=P_{\cls}M_{z_j}P_{\clq}(I_{H^2(\D^n)} - M_{z_i}^*P_{\cls}M_{z_i})P_{\clq}M^*_{z_j}P_{\cls}\\
&=P_{\cls}M_{z_j}P_{\clq}M_{z_i}^*(I_{H^2(\D^n)} - P_{\cls})M_{z_i}P_{\clq}M^*_{z_j}P_{\cls}\\
&=P_{\cls}M_{z_j}P_{\clq}M_{z_i}^*P_{\clq}M_{z_i}P_{\clq}M^*_{z_j}P_{\cls} \geq 0.
	\end{split}
	\]
So again by Douglas's range inclusion theorem, there exists a contraction $B_{i,j}$ on $\cls$ such that
	\[
	[R_{z_j}^*, R_{z_i}]=B_{i,j} (I_{\cls}-R_{z_j}R_{z_j}^*-E_{z_j}E_{z_j}^*)^{1/2}.
	\]
Part (iii) is a consequence of part (i) and part (iv) is a consequence of part (ii).
\end{proof}
Let us explore some further connections.
\begin{lem}
Let $\cls$ be a submodule of $H^2(\D^n)$. Then the following are equivalent:
\begin{enumerate}
\item[(i)] $[R_{z_j}^*, R_{z_i}](I_{\cls}-R_{z_i}R_{z_i}^*-E_{z_i}E_{z_i}^*)=0$, for all distinct $i,j \in \{1,\dots,n\}$, 
\item[(ii)] $(I_{\cls}-R_{z_j}R_{z_j}^*-E_{z_j}E_{z_j}^*)[R_{z_j}^*, R_{z_i}]=0$, for all distinct $i,j \in \{1,\dots,n\}$, 
\end{enumerate}

\end{lem}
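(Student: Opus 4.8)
The plan is to derive the equivalence purely from the observation that the two displayed families of identities are adjoints of one another, up to a relabeling of the indices. First I would abbreviate the defect operator by writing $D_i := I_{\cls} - R_{z_i}R_{z_i}^* - E_{z_i}E_{z_i}^*$; by Lemma \ref{ABC} we have $D_i = P_{\cls} M_{z_i} P_{\clq} M_{z_i}^*|_{\cls}$, so in particular each $D_i$ is self-adjoint (indeed nonnegative). With this notation, condition (i) is the family of identities $[R_{z_j}^*, R_{z_i}]\, D_i = 0$ over all distinct $i,j$, while condition (ii) is the family $D_j\,[R_{z_j}^*, R_{z_i}] = 0$ over all distinct $i,j$.

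The key elementary fact I would record is that the adjoint of a cross-commutator is again a cross-commutator from the same family: a direct computation gives
\[
[R_{z_j}^*, R_{z_i}]^* = (R_{z_j}^* R_{z_i} - R_{z_i} R_{z_j}^*)^* = R_{z_i}^* R_{z_j} - R_{z_j} R_{z_i}^* = [R_{z_i}^*, R_{z_j}].
\]
Assuming (i), I would take adjoints of each identity $[R_{z_j}^*, R_{z_i}]\, D_i = 0$. Since $D_i$ is self-adjoint, this yields $D_i\, [R_{z_i}^*, R_{z_j}] = 0$ for all distinct $i,j$. Because this holds for every ordered pair of distinct indices, I am free to interchange the roles of $i$ and $j$, which recasts the identity as $D_j\, [R_{z_j}^*, R_{z_i}] = 0$ for all distinct $i,j$, which is precisely condition (ii). The reverse implication is identical in structure: starting from (ii), taking adjoints (again using that $D_j$ is self-adjoint) gives $[R_{z_i}^*, R_{z_j}]\, D_j = 0$, and relabeling $i \leftrightarrow j$ recovers (i).

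The argument involves no serious obstacle; the only point that genuinely matters is that both conditions are quantified over \emph{all} distinct pairs $i,j$, so that the relabeling step is legitimate. Were the identities posited only for a single fixed pair, the adjoint of (i) would relate the defect in the $i$-slot to the commutator $[R_{z_i}^*, R_{z_j}]$, and one could not convert this back into the form appearing in (ii). Thus the entire substance of the lemma reduces to the self-adjointness of the defect $D_i$ supplied by Lemma \ref{ABC} together with the commutator-adjoint identity above, and everything else is bookkeeping of indices.
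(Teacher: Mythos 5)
Your proof is correct and follows essentially the same route as the paper: both arguments rest on the self-adjointness of $I_{\cls}-R_{z_i}R_{z_i}^*-E_{z_i}E_{z_i}^*$ (from Lemma \ref{ABC}), the identity $[R_{z_j}^*, R_{z_i}]^*=[R_{z_i}^*, R_{z_j}]$, and the relabeling of $i$ and $j$, which is legitimate because both conditions are quantified over all distinct pairs. Your additional remark about why the quantification over all pairs is essential is a nice clarification, but the substance matches the paper's proof exactly.
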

\begin{proof}
	First note that $(I_{\cls}-R_{z_i}R_{z_i}^*-E_{z_i}E_{z_i}^*)$ is self adjoint and $[R_{z_j}^*, R_{z_i}]^*=[R_{z_i}^*, R_{z_j}]$. Thus,	
   $[R_{z_j}^*, R_{z_i}](I_{\cls}-R_{z_i}R_{z_i}^*-E_{z_i}E_{z_i}^*)=0$ if and only if $(I_{\cls}-R_{z_i}R_{z_i}^*-E_{z_i}E_{z_i}^*)[R_{z_i}^*, R_{z_j}]=0$. In the last condition if we  interchanging the role of $i$ and $j$, we get $(I_{\cls}-R_{z_j}R_{z_j}^*-E_{z_j}E_{z_j}^*)[R_{z_j}^*, R_{z_i}]=0$.  This completes the proof.
\end{proof}
We know that Beurling-type submodules of $H^2(\D^n)$ must satisfy $[R_{z_j}^*, R_{z_i}] = 0$ for all distinct $i,j \in \{1,\ldots,n\}$, and therefore, $[R_{z_j}^*, R_{z_i}]^2 = 0$. Using our methods, we can find sufficient conditions for an arbitrary submodule of $H^2(\D^n)$ to satisfy this condition.
\begin{prop}\label{square}
	Let $\cls$ be a submodule of $H^2(\D^n)$, then for any distinct $i,j \in \{1,\ldots,n\}$, we have
	\[
	[R_{z_j}^*, R_{z_i}](I_{\cls}-R_{z_i}R_{z_i}^*-E_{z_i}E_{z_i}^*)=0\quad \text{implies} \quad[R_{z_j}^*, R_{z_i}]^2=0.
	\]
\end{prop}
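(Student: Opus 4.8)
The plan is to exploit the factorization recorded in Lemma \ref{submodule factor}(i). Fix distinct $i,j \in \{1,\ldots,n\}$ and abbreviate $T:=[R_{z_j}^*, R_{z_i}]$ together with $D_i:=I_{\cls}-R_{z_i}R_{z_i}^*-E_{z_i}E_{z_i}^*$, where $D_i$ is a non-negative operator on $\cls$ by Lemma \ref{ABC}. By Lemma \ref{submodule factor}(i) there is a contraction $A_{i,j}$ on $\cls$ with $T = D_i^{1/2} A_{i,j}$; the crucial structural feature to keep in mind is that the positive square root $D_i^{1/2}$ appears as the \emph{left} factor of this product.

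First I would upgrade the hypothesis $T D_i = 0$ to the stronger statement $T D_i^{1/2} = 0$. Multiplying the hypothesis on the right by $T^*$ gives $T D_i T^* = 0$. Writing $D_i = D_i^{1/2} D_i^{1/2}$ and using the self-adjointness of $D_i^{1/2}$, this becomes $(T D_i^{1/2})(T D_i^{1/2})^* = 0$, and an operator of the form $X X^*$ vanishes precisely when $X = 0$ (since $\langle X X^* \xi, \xi\rangle = \|X^* \xi\|^2$). Hence $T D_i^{1/2} = 0$. Equivalently, one may argue through ranges: $T D_i = 0$ forces $\text{ran}\, D_i \subseteq \ker T$, and since $\ker T$ is closed while $\overline{\text{ran}}\, D_i = \overline{\text{ran}}\, D_i^{1/2}$, we again obtain $T D_i^{1/2} = 0$.

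The final step is then immediate. In $T^2 = T \cdot T$ I would rewrite the right-hand factor using the factorization $T = D_i^{1/2} A_{i,j}$, so that
\[
T^2 = T\,(D_i^{1/2} A_{i,j}) = (T D_i^{1/2})\,A_{i,j} = 0,
\]
which is exactly the desired conclusion $[R_{z_j}^*, R_{z_i}]^2 = 0$.

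There is no serious obstacle once the factorization is available; the only point worth flagging is the passage from $T D_i = 0$ to $T D_i^{1/2} = 0$, which genuinely relies on the positivity of $D_i$ (equivalently, on the equality of the closed ranges of $D_i$ and $D_i^{1/2}$). The reason the two ingredients fit together so cleanly is precisely the placement of the square root on the left in Lemma \ref{submodule factor}(i): it guarantees that the inner factor of $T^2$ begins with $D_i^{1/2}$, which is exactly the operator annihilated by $T$.
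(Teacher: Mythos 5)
Your proof is correct and takes essentially the same approach as the paper's: the paper combines the hypothesis (which gives $\overline{\text{ran}}(I_{\cls}-R_{z_i}R_{z_i}^*-E_{z_i}E_{z_i}^*)\subseteq \text{ker}[R_{z_j}^*, R_{z_i}]$) with Lemma \ref{submodule factor}(iii) to obtain $\overline{\text{ran}}[R_{z_j}^*, R_{z_i}]\subseteq \text{ker}[R_{z_j}^*, R_{z_i}]$, whence the square vanishes. Your use of the explicit factorization from part (i) rather than the range inclusion of part (iii) is only a cosmetic difference, since (iii) is precisely the range-inclusion restatement of (i), and your upgrade from $TD_i=0$ to $TD_i^{1/2}=0$ mirrors the paper's passage to the closed range of $D_i$.
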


\begin{proof}
If we assume $[R_{z_j}^*, R_{z_i}](I_{\cls}-R_{z_i}R_{z_i}^*-E_{z_i}E_{z_i}^*)=0$ for distinct $i,j \in \{1,\ldots,n\}$, then 
\[
\overline{\text{ran}}(I_{\cls}-R_{z_i}R_{z_i}^*-E_{z_i}E_{z_i}^*)\subseteq  \text{ker}[R_{z_j}^*, R_{z_i}].
\]
On the other hand from Lemma \ref{submodule factor} (iii) we get
\[
\overline{\text{ran}}[R_{z_j}^*, R_{z_i}]\subseteq \overline{\text{ran}}(I_{\cls}-R_{z_i}R_{z_i}^*-E_{z_i}E_{z_i}^*).
\]
This implies  $\overline{\text{ran}}[R_{z_j}^*, R_{z_i}]\subseteq \text{ker}[R_{z_j}^*, R_{z_i}]$, which further implies that $[R_{z_j}^*, R_{z_i}]^2=0$. This completes the proof.
\end{proof}

In our study, we sometimes have to consider quotient modules and the corresponding compression operators. Recall that the compression operators (also referred to as the model operators \cite{Ber, Sarason, NF}) on quotient modules $\clq \subseteq H^2(\D^n)$ are denoted by $C_{z_j}$ and defined by
\[
C_{z_j}:=P_{\clq}M_{z_j}|_{\clq}.
\]
For all $j \in \{1,\dots,n\}$, the corresponding defect operators are defined by
\[
D_{C_{z_j}}:= (I_{\clq}-C_{z_j}^*C_{z_j})^{1/2},\quad \text{and}\quad D_{C_{z_j}^*}:=(I_{\clq}-C_{z_j}C_{z_j}^*)^{1/2},.
\]
Expanding the terms we get
\[
D_{C_{z_j}}^2 = P_{\clq} M_{z_j}^*P_{\cls} M_{z_j}P_{\clq},\quad \text{and}\quad D_{C_{z_j}^*}^2 = P_{\clq} (I_{H^2(\D^n)} - M_{z_j}M_{z_j}^*)P_{\clq}.
\]
The corresponding defect spaces are defined by
\[
\cld_{C_{z_j}}:=\overline{\text{ran}}D_{C_{z_j}} \quad \text{and}\quad
\cld_{C_{z_j}^*}:=\overline{\text{ran}}D_{C_{z_j}^*}.
\]
Note that
\begin{equation}\label{range}
\overline{\text{ran}}(I_{\clq} - C_{z_j}^*C_{z_j})^{1/2}=\overline{\text{ran}}(I_{\clq} - C_{z_j}^*C_{z_j})=\overline{\text{ran}}(P_{\clq}M_{z_j}^*|_{\cls}), \quad (j=1,\dots,n).
\end{equation}
Before moving into further details let us note a simple but useful identity.
\begin{lem}\label{comp}
Let $\clq$ be a quotient module of $H^2(\D^n)$, then for any distinct $i,j \in \{1,\dots,n\}$,
 \[
[C_{z_j},C_{z_i}^*]=P_{\clq}M_{z_i}^*P_{\cls}M_{z_j}|_{\clq}.
\]
\end{lem}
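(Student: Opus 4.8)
The plan is to prove the identity by a direct computation, entirely parallel to Lemma \ref{commutator rest}, resting on two structural facts. First, since $\clq$ is a quotient module, it is $M_{\bm{z}}^*$-invariant; consequently, for every $i$ the adjoint compression satisfies $C_{z_i}^* = P_{\clq} M_{z_i}^*|_{\clq} = M_{z_i}^*|_{\clq}$, because $M_{z_i}^*$ already maps $\clq$ into $\clq$ and so the inner projection is redundant. At the level of operators on $H^2(\D^n)$ this reads $P_{\clq} M_{z_i}^* P_{\clq} = M_{z_i}^* P_{\clq}$. Second, I would use the double commutativity of the polydisc shifts: for distinct $i,j$ one has $M_{z_i}^* M_{z_j} = M_{z_j} M_{z_i}^*$.

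With these in hand, I would expand the commutator as operators on $\clq$. Writing $C_{z_j} = P_{\clq} M_{z_j} P_{\clq}$ and using the first fact to collapse the interior projection, one gets $C_{z_j} C_{z_i}^* = P_{\clq} M_{z_j} P_{\clq} M_{z_i}^* P_{\clq} = P_{\clq} M_{z_j} M_{z_i}^* P_{\clq}$, while $C_{z_i}^* C_{z_j} = P_{\clq} M_{z_i}^* P_{\clq} M_{z_j} P_{\clq}$. Subtracting and applying double commutativity to rewrite $M_{z_j} M_{z_i}^*$ as $M_{z_i}^* M_{z_j}$ in the first term yields
\[
[C_{z_j}, C_{z_i}^*] = P_{\clq} M_{z_i}^*(I_{H^2(\D^n)} - P_{\clq}) M_{z_j} P_{\clq} = P_{\clq} M_{z_i}^* P_{\cls} M_{z_j}|_{\clq},
\]
which is precisely the asserted formula once one recalls $I_{H^2(\D^n)} - P_{\clq} = P_{\cls}$.

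There is no serious obstacle here; the only point requiring care is the bookkeeping of which orthogonal projections may be inserted or deleted. The single essential input is the coinvariance of $\clq$, which is what allows the middle projection in $C_{z_i}^*$ to be dropped (equivalently, it is the dual statement to the invariance of $\cls$ used in Lemma \ref{commutator rest}). The double commutativity of the shifts then does the rest, exactly as in the submodule computation.
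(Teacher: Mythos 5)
Your proof is correct and is essentially identical to the paper's own argument: both expand $[C_{z_j},C_{z_i}^*]$, use the $M_{\bm{z}}^*$-invariance of $\clq$ to collapse the interior projection in $C_{z_j}C_{z_i}^*$, and then invoke $M_{z_j}M_{z_i}^*=M_{z_i}^*M_{z_j}$ to produce the factor $I_{H^2(\D^n)}-P_{\clq}=P_{\cls}$. The only difference is that you spell out explicitly the two facts the paper uses silently.
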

\begin{proof}
	Fix any distinct $i,j \in \{1,\dots,n\}$, then
	\[
	\begin{split}
		[C_{z_j},C_{z_i}^*]&=C_{z_j}C_{z_i}^*-C_{z_i}^*C_{z_j}
		\\
		&=P_{\clq}M_{z_j}M_{z_i}^*|_{\clq}-P_{\clq}M_{z_i}^*P_{\clq}M_{z_j}|_{\clq}
		\\
		&=P_{\clq}M_{z_i}^*P_{\cls}M_{z_j}|_{\clq}.
	\end{split}
	\]
\end{proof}
We present a few results on compressions and cross-commutators analogous to Lemma \ref{submodule factor}.
\begin{lem}\label{containment}
Let $\clq$ be a quotient module of $H^2(\D^n)$, then for any distinct $i,j \in \{1,\dots,n\}$, we have
\begin{itemize}
	\item[(i)] $[C_{z_j},C_{z_i}^*]=A_{i,j}D_{C_{z_j}}$ for some contraction $A_{i,j}$ on $\clq$. \vspace{1mm}
	\item[(ii)]$[C_{z_j},C_{z_i}^*]=D_{C_{z_i}}B_{i,j},$ for some contraction $B_{i,j}$ on $\clq$.\vspace{1mm}
	\item[(iii)] $\overline{\text{ran}} [C_{z_j},C_{z_i}^*]\subseteq \cld_{C_{z_i}}$.\vspace{1mm}
	\item[(iv)] $\text{ker}(D_{C_{z_j}})\subseteq \text{ker} [C_{z_j},C_{z_i}^*]$.
\end{itemize}
\end{lem}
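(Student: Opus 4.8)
The plan is to mirror the proof of Lemma \ref{submodule factor} almost verbatim, now on the quotient-module side, with Lemma \ref{comp} playing the role that Lemma \ref{commutator rest} played there, and with Douglas's factorization lemma \cite{Douglas} as the engine. Since the factorizations in (i) and (ii) sit on opposite sides, I will establish each by producing a suitable operator inequality between the relevant defect square and the appropriate product of the cross-commutator with its adjoint, and then invoking the range-inclusion theorem.

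For part (ii) I would compute the difference $D_{C_{z_i}}^2 - [C_{z_j},C_{z_i}^*][C_{z_j},C_{z_i}^*]^*$. Using Lemma \ref{comp} to write $[C_{z_j},C_{z_i}^*]=P_{\clq}M_{z_i}^*P_{\cls}M_{z_j}P_{\clq}$, together with the displayed expression $D_{C_{z_i}}^2=P_{\clq}M_{z_i}^*P_{\cls}M_{z_i}P_{\clq}$, both terms share the common factors $P_{\clq}M_{z_i}^*P_{\cls}$ on the left and $P_{\cls}M_{z_i}P_{\clq}$ on the right (after inserting $P_{\cls}^2=P_{\cls}$). This lets me pull them out and reduce the difference to $(P_{\clq}M_{z_i}^*P_{\cls})(I_{H^2(\D^n)}-M_{z_j}P_{\clq}M_{z_j}^*)(P_{\cls}M_{z_i}P_{\clq})$, which is manifestly of the form $W^*(I_{H^2(\D^n)}-M_{z_j}P_{\clq}M_{z_j}^*)W$ with $W=P_{\cls}M_{z_i}P_{\clq}$. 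The single positivity input I need is $I_{H^2(\D^n)}-M_{z_j}P_{\clq}M_{z_j}^*\geq 0$, which holds because $P_{\clq}\leq I_{H^2(\D^n)}$ and $M_{z_j}$ is a contraction, so $M_{z_j}P_{\clq}M_{z_j}^*\leq M_{z_j}M_{z_j}^*\leq I_{H^2(\D^n)}$. Hence the difference is nonnegative, and Douglas's theorem yields a contraction $B_{i,j}$ on $\clq$ with $[C_{z_j},C_{z_i}^*]=D_{C_{z_i}}B_{i,j}$.

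Part (i) is the mirror image: there I would bound $[C_{z_j},C_{z_i}^*]^*[C_{z_j},C_{z_i}^*]$ by $D_{C_{z_j}}^2$ instead. The identical manipulation gives $D_{C_{z_j}}^2-[C_{z_j},C_{z_i}^*]^*[C_{z_j},C_{z_i}^*]=(P_{\clq}M_{z_j}^*P_{\cls})(I_{H^2(\D^n)}-M_{z_i}P_{\clq}M_{z_i}^*)(P_{\cls}M_{z_j}P_{\clq})\geq 0$, and applying Douglas to the adjoint (using that $D_{C_{z_j}}$ is self-adjoint) produces a contraction $A_{i,j}$ on $\clq$ with $[C_{z_j},C_{z_i}^*]=A_{i,j}D_{C_{z_j}}$. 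Parts (iii) and (iv) are then immediate consequences in exactly the way parts (iii)--(iv) of Lemma \ref{submodule factor} followed from its parts (i)--(ii): the factorization in (ii) forces $\overline{\text{ran}}[C_{z_j},C_{z_i}^*]\subseteq\overline{\text{ran}}\,D_{C_{z_i}}=\cld_{C_{z_i}}$, and the factorization in (i) forces $\ker D_{C_{z_j}}\subseteq\ker[C_{z_j},C_{z_i}^*]$.

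The computation is essentially routine bookkeeping, so there is no deep obstacle; the only points that require care are keeping the two factorization sides straight --- choosing $D_{C_{z_i}}$ as the \emph{right} factor in (ii) versus $D_{C_{z_j}}$ as the \emph{left} factor in (i) --- and correctly transposing to adjoints before invoking Douglas in part (i). Tracking which variable index attaches to which defect operator is where a careless slip would most easily occur, but the underlying positivity is uniform and reduces in every case to the contractivity of the shifts together with $P_{\clq}\leq I_{H^2(\D^n)}$.
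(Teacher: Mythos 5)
Your proposal is correct and follows essentially the same route as the paper: the paper likewise computes $D_{C_{z_j}}^2-[C_{z_j},C_{z_i}^*]^*[C_{z_j},C_{z_i}^*]=P_{\clq}M_{z_j}^*P_{\cls}(I_{H^2(\D^n)}-M_{z_i}P_{\clq}M_{z_i}^*)P_{\cls}M_{z_j}P_{\clq}\geq 0$ and $D_{C_{z_i}}^2-[C_{z_j},C_{z_i}^*][C_{z_j},C_{z_i}^*]^*\geq 0$, invokes Douglas's range inclusion theorem for (i) and (ii), and deduces (iii) from (ii) and (iv) from (i). Your extra remarks (inserting $P_{\cls}^2=P_{\cls}$, the positivity of the middle factor, and passing to adjoints before applying Douglas) only make explicit what the paper leaves implicit.
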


\begin{proof}
	Fix any distinct $i,j \in \{1,\dots,n\}$. Then
	\[
	\begin{split}
     D_{C_{z_j}}^2-[C_{z_j},C_{z_i}^*]^*[C_{z_j},C_{z_i}^*]&=P_{\clq}M_{z_j}^*P_{\cls}M_{z_j}P_{\clq}  -   P_{\clq}M_{z_j}^*P_{\cls}M_{z_i}
       P_{\clq}M_{z_i}^*P_{\cls}M_{z_j}P_{\clq}
       \\
       &=P_{\clq}M_{z_j}^*P_{\cls} (I_{H^2(\D^n)} - M_{z_i}
       P_{\clq}M_{z_i}^*) P_{\cls}M_{z_j}P_{\clq} \geq 0.
	\end{split}
	\]
Hence, by Douglas's range inclusion theorem there exists a contraction $A_{i,j}$ on $\clq$ such that
	\[
	[C_{z_j},C_{z_i}^*]=A_{i,j}D_{C_{z_j}}.
	\]
		This completes the proof of (i). To prove (ii), we consider
		\[
\begin{split}
		D_{C_{z_i}}^2-[C_{z_j},C_{z_i}^*][C_{z_j},C_{z_i}^*]^*&=P_{\clq}M_{z_i}^*P_{\cls}M_{z_i}P_{\clq}- P_{\clq}M_{z_i}^*P_{\cls}M_{z_j}P_{\clq}M_{z_j}^*P_{\cls}M_{z_i}P_{\clq}
		\\
		&=P_{\clq}M_{z_i}^*P_{\cls} (I_{H^2(\D^n)}-M_{z_j}P_{\clq}M_{z_j}^*)   P_{\cls}M_{z_i}P_{\clq} \geq 0.
\end{split}
		\]
Thus, again by Douglas's range inclusion theorem, we get
		\[
		[C_{z_j},C_{z_i}^*]=D_{C_{z_i}}B_{i,j}
		\]
		for some contraction $B_{i,j}$ on $\clq$.
Now it is easy to conclude that part (iii)  is a consequence of part (ii) and part (iv) follows from part (i).
\end{proof}
We will now explore some correspondence between the cross-commutators of restrictions and compression operators. 

\begin{lem}
	Let $\cls$ be a submodule of $H^2(\D^n)$. Then for any distinct $i,j \in \{1,\dots,n\}$, 
	$[R_{z_j}^*, R_{z_i}](I_{\cls}-R_{z_i}R_{z_i}^*-E_{z_i}E_{z_i}^*)=0$ if and only if $D_{C_{z_i}}[C_{z_i}, C_{z_j}^*]D_{C_{z_i}}=0$. Moreover,
	\[
	[R_{z_j}^*, R_{z_i}](I_{\cls}-R_{z_i}R_{z_i}^*-E_{z_i}E_{z_i}^*)=0\quad \text{implies} \quad [C_{z_i}, C_{z_j}^*]^2=0.
	\]
\end{lem}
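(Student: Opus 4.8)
The plan is to factor every operator appearing in the statement through a single pair of building blocks and then to transport the vanishing of a product between $\cls$ and $\clq$ by a polar decomposition. Set
\[
V := P_{\cls} M_{z_i} P_{\clq} \colon \clq \to \cls, \qquad U := P_{\clq} M_{z_j}^* P_{\cls} \colon \cls \to \clq .
\]
Reading off Lemmas \ref{commutator rest}, \ref{ABC}, \ref{comp} and the formula for $D_{C_{z_i}}^2$ recorded above, I obtain the four identities
\[
[R_{z_j}^*, R_{z_i}] = VU, \qquad I_{\cls}-R_{z_i}R_{z_i}^*-E_{z_i}E_{z_i}^* = VV^*, \qquad [C_{z_i}, C_{z_j}^*] = UV, \qquad D_{C_{z_i}}^2 = V^*V,
\]
where the first two are operators on $\cls$ and the last two on $\clq$. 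In particular the left-hand operator in the lemma is exactly $VUVV^*$, while $D_{C_{z_i}}[C_{z_i},C_{z_j}^*]D_{C_{z_i}} = D_{C_{z_i}}(UV)D_{C_{z_i}}$.

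Next I would write the polar decomposition $V = W D_{C_{z_i}}$, with $W \colon \clq \to \cls$ the partial isometry having initial space $\cld_{C_{z_i}} = \overline{\text{ran}}\,D_{C_{z_i}}$ and $\text{ker}\,W = \text{ker}\,D_{C_{z_i}}$, so that $W^*W = P_{\cld_{C_{z_i}}}$ and $V^* = D_{C_{z_i}} W^*$. Substituting into $VUVV^*$ and regrouping gives the conjugation identity
\[
[R_{z_j}^*, R_{z_i}]\,(I_{\cls}-R_{z_i}R_{z_i}^*-E_{z_i}E_{z_i}^*) = W\big(D_{C_{z_i}}[C_{z_i},C_{z_j}^*]D_{C_{z_i}}\big)W^*,
\]
which settles the $(\Leftarrow)$ implication at once. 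For $(\Rightarrow)$, set $T := D_{C_{z_i}}[C_{z_i},C_{z_j}^*]D_{C_{z_i}}$; compressing $WTW^*=0$ by $W^*(\,\cdot\,)W$ yields $P_{\cld_{C_{z_i}}} T P_{\cld_{C_{z_i}}} = 0$, and since the two flanking factors $D_{C_{z_i}}$ force $\overline{\text{ran}}\,T \subseteq \cld_{C_{z_i}}$ and $T|_{\text{ker}\,D_{C_{z_i}}} = 0$, this compression equals $T$ itself, so $T=0$. This is the desired equivalence.

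For the \emph{moreover} assertion I would mimic the proof of Proposition \ref{square} on the quotient side. By Lemma \ref{containment}(i), applied with the roles of $i$ and $j$ interchanged, $[C_{z_i},C_{z_j}^*] = A\,D_{C_{z_i}}$ for some contraction $A$ on $\clq$; inserting this into $D_{C_{z_i}}[C_{z_i},C_{z_j}^*]D_{C_{z_i}}=0$ and using $\overline{\text{ran}}\,D_{C_{z_i}}^2 = \overline{\text{ran}}\,D_{C_{z_i}}$ collapses the condition to $D_{C_{z_i}}[C_{z_i},C_{z_j}^*]=0$, i.e. $\overline{\text{ran}}[C_{z_i},C_{z_j}^*] \subseteq \text{ker}\,D_{C_{z_i}}$. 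Combined with Lemma \ref{containment}(iv), again with $i,j$ swapped, namely $\text{ker}\,D_{C_{z_i}} \subseteq \text{ker}[C_{z_i},C_{z_j}^*]$, this gives $\overline{\text{ran}}[C_{z_i},C_{z_j}^*] \subseteq \text{ker}[C_{z_i},C_{z_j}^*]$ and hence $[C_{z_i},C_{z_j}^*]^2=0$.

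The crux is the $(\Rightarrow)$ direction of the equivalence: transporting the vanishing from $\cls$ to $\clq$ is not purely formal, because $W$ is merely a partial isometry, so I must know that $T$ lives on the initial space $\cld_{C_{z_i}}$ of $W$ — which is precisely what the two surrounding $D_{C_{z_i}}$ factors guarantee. A minor but essential bookkeeping point is that the relevant defect space is $\cld_{C_{z_i}}$ throughout (not $\cld_{C_{z_j}}$), so Lemma \ref{containment} must be invoked with its indices interchanged relative to how it is stated.
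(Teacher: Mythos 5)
Your proof is correct; every identity you assert checks out, including the conjugation identity $[R_{z_j}^*,R_{z_i}](I_{\cls}-R_{z_i}R_{z_i}^*-E_{z_i}E_{z_i}^*) = W\big(D_{C_{z_i}}[C_{z_i},C_{z_j}^*]D_{C_{z_i}}\big)W^*$ and the index-swapped invocations of Lemma \ref{containment}. The skeleton is the same as the paper's: both arguments rest on recognizing that all four operators factor through the two bridge operators $V=P_{\cls}M_{z_i}P_{\clq}$ and $U=P_{\clq}M_{z_j}^*P_{\cls}$ (via Lemmas \ref{commutator rest}, \ref{ABC}, \ref{comp} and the formula $D_{C_{z_i}}^2=P_{\clq}M_{z_i}^*P_{\cls}M_{z_i}P_{\clq}$), and both use Lemma \ref{containment} for the nilpotency claim. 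Where you genuinely diverge is the mechanism for transporting the vanishing between $\cls$ and $\clq$: the paper runs a sesquilinear-form computation $\la [R_{z_j}^*,R_{z_i}](\cdots)s_1,s_2\ra = \la P_{\clq}M_{z_j}^*P_{\cls}M_{z_i}P_{\clq}M_{z_i}^*s_1, P_{\clq}M_{z_i}^*s_2\ra$ and reads off the subspace inclusion $[C_{z_i},C_{z_j}^*]\cld_{C_{z_i}}\subseteq \ker D_{C_{z_i}}$ (using the range identification (\ref{range})), which it then interprets as $D_{C_{z_i}}[C_{z_i},C_{z_j}^*]D_{C_{z_i}}=0$; you instead use the polar decomposition $V=WD_{C_{z_i}}$ to get an exact operator identity, from which both implications drop out symmetrically by compressing with $W^*(\cdot)W$ and noting $T$ is supported on $\cld_{C_{z_i}}$. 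For the moreover part, the paper decomposes $\clq=\ker D_{C_{z_i}}\oplus\cld_{C_{z_i}}$ and shows $[C_{z_i},C_{z_j}^*]^2$ kills each summand, while you first upgrade $D_{C_{z_i}}TD_{C_{z_i}}=0$ to $D_{C_{z_i}}[C_{z_i},C_{z_j}^*]=0$ via Lemma \ref{containment}(i) (mirroring Proposition \ref{square}) and then conclude from range-contained-in-kernel. Your route buys a cleaner, vector-free statement of the equivalence and makes the two directions manifestly symmetric; the paper's route is more elementary, avoiding polar decomposition entirely. Your closing caution about swapping indices in Lemma \ref{containment} is well taken — the paper does the same swap silently, which is legitimate since that lemma is stated for arbitrary distinct pairs.
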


\begin{proof} Fix any distinct but arbitrary $i,j \in \{1,\ldots,n\}$ and let us assume that $[R_{z_j}^*, R_{z_i}](I_{\cls}-R_{z_i}R_{z_i}^*-E_{z_i}E_{z_i}^*)=0$. Using Lemma \ref{commutator rest} and Lemma \ref{ABC}, for any fixed but arbitrary $s_1,s_2\in \cls$, we get
\[
\begin{split}
\la [R_{z_j}^*, R_{z_i}](I_{\cls}-R_{z_i}R_{z_i}^*-E_{z_i}E_{z_i}^*)s_1, s_2\ra
&=\la P_{\cls}M_{z_i}P_{\clq}M_{z_j}^*P_{\cls}M_{z_i}P_{\clq}M_{z_i}^*s_1,s_2 \ra
\\
&=\la P_{\clq}M_{z_j}^*P_{\cls}M_{z_i}P_{\clq}M_{z_i}^*s_1,  P_{\clq}M_{z_i}^*s_2 \ra.
\end{split}
\]
This implies that $P_{\clq}M_{z_j}^*P_{\cls}M_{z_i}P_{\clq}M_{z_i}^*\cls\subseteq\text{ker}(P_{\cls}M_{z_i}|_{\clq})$. So, $[C_{z_i}, C_{z_j}^*]\cld_{C_{z_i}}\subseteq \text{ker}(D_{C_{z_i}})$. Thus, $[R_{z_j}^*, R_{z_i}](I_{\cls}-R_{z_i}R_{z_i}^*-E_{z_i}E_{z_i}^*)=0$ if and only if $D_{C_{z_i}}[C_{z_i}, C_{z_j}^*]D_{C_{z_i}}=0$. Furthermore, by Lemma \ref{containment} (iv) we get $[C_{z_i}, C_{z_j}^*]\cld_{C_{z_i}}\subseteq \text{ker}(D_{C_{z_i}}) \subseteq  \text{ker} [C_{z_i},C_{z_j}^*]$. Hence $$[C_{z_i}, C_{z_j}^*]^2\cld_{C_{z_i}}=\{0\}.$$
This implies $\cld_{C_{z_i}}\subseteq \text{ker}[C_{z_i}, C_{z_j}^*]^2$. Again, using Lemma \ref{containment} (iv) we get $$\text{ker}(D_{C_{z_i}})\subseteq \text{ker} [C_{z_i},C_{z_j}^*]\subseteq \text{ker} [C_{z_i},C_{z_j}^*]^2.$$ Thus
\[
\clq=\text{ker}(D_{C_{z_i}})\oplus \cld_{C_{z_i}}\subseteq \text{ker}[C_{z_i}, C_{z_j}^*]^2,
\]
which further implies that $[C_{z_i}, C_{z_j}^*]^2=0$. This completes the proof.
\end{proof}

\begin{prop}
	Let $\cls$ be a submodule of $H^2(\D^n)$. Then for any distinct $i,j \in \{1,\dots,n\}$,
	$[R_{z_j}^*, R_{z_i}]^2=0$ if and only if $D_{C_{z_i}}[C_{z_i}, C_{z_j}^*]D_{C_{z_j}}=0$. Moreover,
	\[
	[R_{z_j}^*, R_{z_i}]^2=0 \quad \text{implies}\quad  [C_{z_i}, C_{z_j}^*]^3=0.
	\]
\end{prop}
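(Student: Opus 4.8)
The plan is to follow the pattern of the preceding lemma, but to keep careful track of the \emph{two distinct} variables so that the correct defect operators $D_{C_{z_i}}$ and $D_{C_{z_j}}$ emerge on the two sides. Fix distinct $i,j$ and write $K:=[R_{z_j}^*, R_{z_i}]$. By Lemma \ref{commutator rest}, $K=P_{\cls}M_{z_i}P_{\clq}M_{z_j}^*|_{\cls}$, and composing this formula with itself gives
\[
K^2 = P_{\cls}M_{z_i}P_{\clq}M_{z_j}^*P_{\cls}M_{z_i}P_{\clq}M_{z_j}^*|_{\cls}.
\]
First I would test $K^2$ against arbitrary $s_1,s_2\in\cls$. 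Moving the outermost $P_{\cls}M_{z_i}$ across the inner product, using $P_{\cls}s_2=s_2$, and noting that the resulting left-hand vector already lies in $\clq$, one obtains
\[
\la K^2 s_1, s_2\ra = \la P_{\clq}M_{z_j}^*P_{\cls}M_{z_i}\,(P_{\clq}M_{z_j}^* s_1),\ P_{\clq}M_{z_i}^* s_2\ra.
\]
By Lemma \ref{comp}, read with $i$ and $j$ interchanged, $P_{\clq}M_{z_j}^*P_{\cls}M_{z_i}|_{\clq}=[C_{z_i}, C_{z_j}^*]$, so the form collapses to $\la [C_{z_i}, C_{z_j}^*]\,(P_{\clq}M_{z_j}^* s_1),\ P_{\clq}M_{z_i}^* s_2\ra$.

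The next step is to convert the vanishing of this sesquilinear form into the stated operator identity. As $s_1$ ranges over $\cls$, the vector $P_{\clq}M_{z_j}^* s_1$ ranges over $\text{ran}(P_{\clq}M_{z_j}^*|_{\cls})$, which by \eqref{range} is dense in $\cld_{C_{z_j}}$; similarly $P_{\clq}M_{z_i}^* s_2$ fills a dense subset of $\cld_{C_{z_i}}$. Since $K^2=0$ is equivalent to the vanishing of its sesquilinear form on all pairs, continuity and density give that $K^2=0$ holds if and only if $\la [C_{z_i}, C_{z_j}^*]w, v\ra=0$ for every $w\in\cld_{C_{z_j}}$ and $v\in\cld_{C_{z_i}}$, i.e. $P_{\cld_{C_{z_i}}}[C_{z_i}, C_{z_j}^*]P_{\cld_{C_{z_j}}}=0$. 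Because $D_{C_{z_i}}, D_{C_{z_j}}$ are self-adjoint with $\overline{\text{ran}}\,D_{C_{z_i}}=\cld_{C_{z_i}}$ and $\overline{\text{ran}}\,D_{C_{z_j}}=\cld_{C_{z_j}}$ (so each $D$ agrees with $D$ composed with the projection onto its range, and its range is dense in the corresponding defect space), sandwiching by the two projections vanishes exactly when sandwiching by the two defects does; that is, $P_{\cld_{C_{z_i}}}[C_{z_i}, C_{z_j}^*]P_{\cld_{C_{z_j}}}=0$ if and only if $D_{C_{z_i}}[C_{z_i}, C_{z_j}^*]D_{C_{z_j}}=0$. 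This yields the equivalence.

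For the ``moreover'' assertion, set $L:=[C_{z_i}, C_{z_j}^*]$. Applying Lemma \ref{containment} with the roles of $i$ and $j$ swapped furnishes contractions $A,B$ on $\clq$ with $L=A\,D_{C_{z_i}}$ and $L=D_{C_{z_j}}\,B$. Substituting the first factorization into the leading copy of $L$ in $L^3$ and the second into the trailing copy gives
\[
L^3 = (A\,D_{C_{z_i}})\,L\,(D_{C_{z_j}}\,B) = A\big(D_{C_{z_i}}\,L\,D_{C_{z_j}}\big)B,
\]
which is zero by the equivalence just established. Hence $[R_{z_j}^*, R_{z_i}]^2=0$ forces $[C_{z_i}, C_{z_j}^*]^3=0$.

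I expect the only delicate point to be the index bookkeeping in the first paragraph: one must verify that the projection rearrangements produce exactly the commutator $[C_{z_i}, C_{z_j}^*]$ (and not its adjoint), and that the two outer vectors land in $\cld_{C_{z_j}}$ and $\cld_{C_{z_i}}$ \emph{respectively}, since a single index slip would alter which defect operators appear and would ruin the clean cancellation $A(D_{C_{z_i}} L D_{C_{z_j}})B$ in the final step. Everything beyond this is a routine application of the Douglas-factorization consequences recorded in Lemma \ref{containment} together with the range identity \eqref{range}.
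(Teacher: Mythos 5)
Your proof is correct. For the equivalence you follow essentially the paper's own route: the identical sesquilinear-form computation reducing $\la [R_{z_j}^*,R_{z_i}]^2 s_1,s_2\ra$ to $\la [C_{z_i},C_{z_j}^*]\,P_{\clq}M_{z_j}^*s_1,\,P_{\clq}M_{z_i}^*s_2\ra$, followed by the density statement of \eqref{range}; your only addition is to make explicit the continuity/density step (through the intermediate identity $P_{\cld_{C_{z_i}}}[C_{z_i},C_{z_j}^*]P_{\cld_{C_{z_j}}}=0$, then trading the two projections for the two self-adjoint defects) that the paper passes over silently when it asserts the ``if and only if.'' Where you genuinely differ is in the \emph{moreover} part. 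The paper chases containments: by Lemma \ref{containment}(iv) it gets $[C_{z_i},C_{z_j}^*]\cld_{C_{z_j}}\subseteq\ker D_{C_{z_i}}\subseteq\ker[C_{z_i},C_{z_j}^*]$, hence $\cld_{C_{z_j}}\subseteq\ker[C_{z_i},C_{z_j}^*]^2$, and combines this with $\overline{\text{ran}}\,[C_{z_i},C_{z_j}^*]\subseteq\cld_{C_{z_j}}$ from Lemma \ref{containment}(iii) to conclude that the cube vanishes. You instead invoke the Douglas factorizations of Lemma \ref{containment}(i) and (ii) (with indices swapped), writing $[C_{z_i},C_{z_j}^*]=A\,D_{C_{z_i}}=D_{C_{z_j}}\,B$ for contractions $A,B$, so that the outer copies of the commutator in the cube sandwich exactly the operator controlled by the hypothesis: $[C_{z_i},C_{z_j}^*]^3=A\big(D_{C_{z_i}}[C_{z_i},C_{z_j}^*]D_{C_{z_j}}\big)B=0$. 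Both finishes rest on the same lemma, but yours is more economical and makes transparent why it is the cube, and not the square, that dies. Your index bookkeeping — which defect lands on which side, and that Lemma \ref{comp} read with $i,j$ interchanged yields the commutator itself rather than its adjoint — is correct throughout.
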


\begin{proof} Fix any distinct $i,j \in \{1,\ldots,n\}$. For any fixed but arbitrary $s_1,s_2\in \cls$, we have 
\begin{align*}
0=\la [R_{z_j}^*, R_{z_i}]^2s_1,s_2 \ra &=\la P_{\cls}M_{z_i}P_{\clq}M_{z_j}^*P_{\cls}M_{z_i}P_{\clq}M_{z_j}^*s_1,s_2  \ra \\
&=\la P_{\clq}M_{z_j}^*P_{\cls}M_{z_i}P_{\clq}M_{z_j}^*s_1, P_{\clq}M_{z_i}^*s_2 \ra.
\end{align*}
This implies that $P_{\clq}M_{z_j}^*P_{\cls}M_{z_i}P_{\clq}M_{z_j}^*\cls\subseteq \text{ker}(P_{\cls}M_{z_i}|_{\clq})$. That is, $[C_{z_i}, C_{z_j}^*]\cld_{C_{z_j}}\subseteq \text{ker}(D_{C_{z_i}})$. Thus, we have $[R_{z_j}^*, R_{z_i}]^2=0$ if and only if $D_{C_{z_i}}[C_{z_i}, C_{z_j}^*]D_{C_{z_j}}=0$.

Now by Lemma \ref{containment} (iv) we have $\text{ker}(D_{C_{z_i}})\subseteq \text{ker} [C_{z_i},C_{z_j}^*]$. Thus $[C_{z_i}, C_{z_j}^*]^2\cld_{C_{z_j}}=\{0\}$. Hence
\begin{equation}\label{eq1}
\cld_{C_{z_j}}\subseteq \text{ker} [C_{z_i},C_{z_j}^*]^2.
\end{equation}

On the other hand by Lemma \ref{containment} (iii) we get
\begin{equation}\label{eq2}
\overline{\text{ran}} [C_{z_i},C_{z_j}^*]\subseteq \cld_{C_{z_j}}.
\end{equation}
Combining \eqref{eq1} and \eqref{eq2} we observe that $\overline{\text{ran}} [C_{z_i},C_{z_j}^*]\subseteq \text{ker} [C_{z_i},C_{z_j}^*]^2.$ Hence $ [C_{z_i},C_{z_j}^*]^3=0.$
\end{proof}
This result motivates the following problem: \textit{characterize submodules $\cls \subseteq H^2(\D^n)$ which satisfy $[R_{z_i}, R_{z_j}^*]^2=0$, but not necessarily $[R_{z_i}, R_{z_j}^*]=0$, for all distinct $i,j \in \{1,\ldots,n\}$?} In the next section, we will study certain submodules that satisfy this criterion. We seek to completely characterize submodules with respect to this condition in future works. We end this section with an observation useful in the later sections.
\begin{lem}\label{projection}
Let $\cls$ be a submodule of $H^2(\D^n)$, then for all $i \in \{1,\ldots,n\}$, the operator $M_{z_i} P_{\cls} M_{z_i}^*$ is a projection onto the closed subspace $z_i \cls$.
\end{lem}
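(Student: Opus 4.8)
The plan is to show that $M_{z_i}P_{\cls}M_{z_i}^*$ is the orthogonal projection onto $z_i\cls$ by verifying the two defining properties of an orthogonal projection: that it is idempotent and self-adjoint, and then identifying its range explicitly as $z_i\cls$. First I would note that since $\cls$ is a submodule (hence $M_{z_i}$-invariant), the operator $R_{z_i} = M_{z_i}|_{\cls}$ is an isometry on $\cls$, so $R_{z_i}^* R_{z_i} = I_{\cls}$; equivalently $P_{\cls}M_{z_i}^* M_{z_i} P_{\cls} = P_{\cls}$ when read as an operator on $H^2(\D^n)$. Self-adjointness of $M_{z_i}P_{\cls}M_{z_i}^*$ is immediate since $P_{\cls}$ is self-adjoint.

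The key computation is idempotency. I would compute
\[
(M_{z_i}P_{\cls}M_{z_i}^*)(M_{z_i}P_{\cls}M_{z_i}^*) = M_{z_i} P_{\cls}(M_{z_i}^* M_{z_i}) P_{\cls} M_{z_i}^*.
\]
The crucial point is that $M_{z_i}$ is itself an isometry on all of $H^2(\D^n)$, so $M_{z_i}^* M_{z_i} = I_{H^2(\D^n)}$, and the middle factor collapses to $P_{\cls}M_{z_i}^* M_{z_i}P_{\cls} = P_{\cls}P_{\cls} = P_{\cls}$ using $P_{\cls}^2 = P_{\cls}$. Hence the product reduces to $M_{z_i}P_{\cls}M_{z_i}^*$, establishing idempotency. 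Thus $M_{z_i}P_{\cls}M_{z_i}^*$ is a self-adjoint idempotent, i.e.\ an orthogonal projection.

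It remains to identify the range. Since $\operatorname{ran}P_{\cls} = \cls$, the range of $M_{z_i}P_{\cls}M_{z_i}^*$ is contained in $M_{z_i}\cls = z_i\cls$. For the reverse inclusion, I would take an arbitrary element $z_i s$ with $s \in \cls$ and check that the projection fixes it: because $M_{z_i}^*(z_i s) = M_{z_i}^* M_{z_i} s = s \in \cls$, applying $P_{\cls}$ leaves it unchanged, and applying $M_{z_i}$ returns $z_i s$. Therefore every element of $z_i\cls$ lies in the range and is fixed, so the range is exactly $z_i\cls$ (which is closed because $M_{z_i}$ is an isometry and $\cls$ is closed).

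The only subtlety I anticipate is keeping track of which isometry relation is being used at each step: the paper needs both that $M_{z_i}$ is an isometry on the \emph{full} space $H^2(\D^n)$ (giving $M_{z_i}^* M_{z_i} = I_{H^2(\D^n)}$) to collapse the middle term in the idempotency check, and that $\cls$ is invariant so that $P_{\cls}M_{z_i}^*M_{z_i}P_{\cls}=P_{\cls}$. This is routine, so I expect no real obstacle; the statement is a clean bookkeeping lemma whose main role is to be invoked in the later sections.
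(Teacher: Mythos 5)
Your proposal is correct and follows essentially the same route as the paper: the paper likewise observes that $M_{z_i} P_{\cls} M_{z_i}^*$ is self-adjoint and idempotent (hence an orthogonal projection), notes the range is contained in $z_i\cls$, and verifies the reverse inclusion by checking that any $h = z_i s$ with $s \in \cls$ is fixed, exactly as you do. The only difference is that you spell out the idempotency computation via $M_{z_i}^* M_{z_i} = I_{H^2(\D^n)}$, which the paper declares to be clear.
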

\begin{proof}
It is clear that $M_{z_i} P_{\cls} M_{z_i}^*$ is a self-adjoint and idempotent operator, and hence an orthogonal projection on $H^2(\D^n)$. Also, the range of $M_{z_i} P_{\cls} M_{z_i}^*$ is clearly contained inside $z_i \cls$. We only need to show that $z_i \cls \subseteq \mbox{ran } M_{z_i} P_{\cls} M_{z_i}^*$.  So, let us consider $h \in z_i\cls$. Thus, there must exist $s \in \cls$ such that $h = z_i s$.  This implies that
\[
M_{z_i} P_{\cls} M_{z_i}^*h = z_i s = h.
\]
Hence, $z_i \cls \subseteq \mbox{ran } M_{z_i} P_{\cls} M_{z_i}^*$, and therefore, we get the equality of the subspaces. This completes the proof.
\end{proof}
\section{Inner submodules of $H^2(\D^n)$}\label{INS}

\subsection{One-variable Beurling-type submodules}\label{INS2}
In this section, we characterize when $\cls$ is a Beurling-type submodule of $H^2(\D^n)$ where the corresponding inner function depends on a particular variable.

\begin{proof}[Proof of Theorem \ref{Beurling-type}]
We have $(I_{\cls}-E_{z_j}E_{z_j}^*)(I_{\cls}-R_{z_j}R_{z_j}^*)=P_{\cls}M_{z_j}M_{z_j}^*(I_{\cls}-P_{\cls}M_{z_j}P_{\cls}M_{z_j}^*|_{\cls})$. If this is zero, then
\[
P_{\cls}M_{z_j}P_{\cls}M_{z_j}^*|_{\cls}=P_{\cls}M_{z_j}M_{z_j}^*|_{\cls}.
\] 
Thus we obtain
\[
P_{\cls}M_{z_j}P_{\clq}M_{z_j}^*|_{\cls}=0.
\]
Equivalently,
\[
P_{\cls}M_{z_j}P_{\clq}=0,
\]
which further implies that $ P_{\cls}M_{z_j}=P_{\cls}M_{z_j}P_{\cls} = M_{z_j}P_{\cls}$.  Hence, $\cls$ is $M_{z_j}$ reducing, for any $j \in \{1,\ldots,n\} \setminus \{i\}$. This implies that for any fixed but arbitrary $j \in \{1,\ldots,n\}$,
\[
R_{z_k}R_{z_j}^*=M_{z_k}P_{\cls}M_{z_j}^*|_{\cls}=M_{z_k}M_{z_j}^*|_{\cls}=P_{\cls}M_{z_k}M_{z_j}^*|_{\cls}=P_{\cls}M_{z_j}^*M_{z_k}|_{\cls}=R_{z_j}^*R_{z_k}, 
\]
for any $k \in \{1,\ldots,n\} \setminus \{j\}$. In other words, we get that $[R_{z_i}, R_{z_j}^*]=0$ for any distinct $i,j \in \{1,\ldots,n\}$. Using the characterization for Beurling-type invariant subspace of $H^2(\D^n)$ in Theorem \ref{Beurling}, we get 
\[
\cls=M_{\theta} H^2(\D^n),
\]
for some inner function $\theta(\bm{z}) \in H^{\infty}(\D^n)$.  Here, $M_{\theta}$ denotes the multiplication operator on $H^2(\D^n)$ corresponding to the symbol $\theta(\bm{z})$. Since $\cls$ is $M_{z_j}$-reducing for any $j\in \{1,\ldots,n\} \setminus \{i\}$,  it further implies that
\[
M_{z_j}M_{\theta}M_{\theta}^*=M_{\theta}M_{\theta}^*M_{z_j},
\]
and therefore,
\[
M_{z_j}M_{\theta}^*= M_{\theta}^*M_{z_j}.
\] 
Hence,
\[
(I_{H^2(\D^n)} - M_{z_j}M_{z_j}^*)M_{\theta}^*f=M_{\theta}^*(I_{H^2(\D^n)} - M_{z_j}M_{z_j}^*)f.
\] for all $f\in H^2(\D^n)$. Note that for any $f \in H^2(\D^n)$ we have
\[
(I_{H^2(\D^n)} - M_{z_j}M_{z_j}^*)f = f(z_1,\ldots,z_{j-1},0,z_{j+1}, \ldots,z_n).
\]
In particular, for any $\bm{w} \in \D^n$, if we take the corresponding Szeg\"o kernel $f=s_{\bm{w}}(\bm{z}):= \underset{i=1}{\overset{n}{\Pi}} \frac{1}{1 - z_i \bar{w_i}}$, then we get
\[
(I_{H^2(\D^n)} - M_{z_j}M_{z_j}^*)M_{\theta}^* s_{\bm{w}} = M_{\theta}^*(I_{H^2(\D^n)} -M_{z_j}M_{z_j}^*)s_{\bm{w}}.
\]
By the action of the adjoint of multiplication operators on Szeg\"o kernels, we get
\[ 
(I_{H^2(\D^n)} - M_{z_j}M_{z_j}^*) \overline{\theta(\bm{w})} s_{\bm{w}} = M_{\theta}^* s_{(w_1,\ldots,w_{j-1},0,w_{j+1}, \ldots,w_n)}.
\]
Again by the action of $M_{\theta}^*$ on Szeg\"o kernels, we get
\[
\overline{\theta(\bm{w})}  s_{(w_1,\ldots,w_{j-1},0,w_{j+1}, \ldots,w_n)} = \overline{\theta(w_1,\ldots,w_{j-1},0,w_{j+1}, \ldots,w_n)} s_{(w_1,\ldots,w_{j-1},0,w_{j+1}, \ldots,w_n)}.
\]
This implies that
\[
\theta(\bm{w}) = \theta(w_1,\ldots,w_{j-1},0,w_{j+1}, \ldots,w_n).
\]
Since $\bm{w}$ is an arbitrary point in $\D^n$, the function $\theta(\bm{z})$ must not depend on the variable  $z_j$. Furthermore, $j$ was an arbitrary number in $\{1,\ldots, n\} \setminus \{i\}$. So, this implies that $\theta(\bm{z})$ depends only on the $z_i$-variable.  In other words, 
\[
\cls = \theta(z_i) H^2(\D^n).
\]
Conversely, if $\theta(\bm{z})=\theta(z_i)$ for some $i \in \{1,\ldots,n\}$, then $\cls$ is $M_{z_j}$-reducing for any $j \in \{1,\ldots,n\} \setminus \{i\}$. Consequently, $P_{\cls}M_{z_j}M_{z_j}^*|_{\cls}=P_{\cls}M_{z_j}P_{\cls}M_{z_j}^*|_{\cls}$., and thus
\[
(I_{\cls}-E_{z_j}E_{z_j}^*)(I_{\cls}-R_{z_j}R_{z_j}^*)=P_{\cls}M_{z_j}M_{z_j}^*(I_{\cls}-P_{\cls}M_{z_j}P_{\cls}M_{z_j}^*|_{\cls})=0,
\]
for any $j \in \{1,\ldots,n\} \setminus \{i\}$. This completes the proof.
\end{proof}

\subsection{Sums of inner submodules} \label{INS3}
Let $\Lambda$ be a subset of $\{1,\ldots,n\}$, and $\{\vp_{\lambda}(z_\lambda)\}_{\lambda \in \Lambda}$ be a collection of inner functions on $\D^n$ such that $\vp_{\lambda}$ depends only on the $z_\lambda$-variable for all $\lambda \in \Lambda$. In this section, we completely characterize  submodules of the following type
\[
\cls_{\Phi_{\Lambda}} :=  \sum_{\lambda \in \Lambda} \vp_\lambda(z_\lambda)H^2(\D^n).
\]
In the case of $\Lambda = \{1,\ldots,n\}$ we simply denote the submodule by $\cls_{\Phi}$. We establish criterion based on conditions on the tuple of restrictions $(R_{z_1}, \ldots, R_{z_n})$, and evaluations $(E_{z_1}, \ldots, E_{z_n})$ corresponding to the submodule $\cls_{\Phi_{\Lambda}} \subseteq H^2(\D^n)$.  It is a straightforward observation that the quotient module corresponding to the above submodule has the following form.
\[
\clq_{\Phi_{\Lambda}} := \cls_{\Phi_{\Lambda}}^{\perp} = \underset{\lambda \in \Lambda}{\otimes} \clq_{\vp_\lambda}. 
\]
In the case of $\Lambda = \{1,\ldots,n\}$, there is a characterization for quotient modules of $H^2(\D^n)$ in terms of the compression operators $C_{z_i}: = P_{\clq_{\Phi}} M_{z_i}|_{\clq_{\Phi}}$, that was established by Izuchi et al. in \cite{INS} for the $n=2$ case and was later extended by Sarkar in \cite{Sarkar} for any $n \geq 3$. 
\begin{thm}\label{co-doubly}
Let $\clq$ be a non-zero quotient module of $H^2(\D^n)$, then for each $i \in \{1,\ldots,n\}$, there exists a  quotient module $\clq_i \subseteq H_{z_i}^2(\D)$ such that
\[
\clq = \clq_{1} \otimes \cdots \otimes \clq_{n},
\]
if and only if 
\[
[C_{z_j},C_{z_i}^*]=0,
\]
for all distinct $i,j \in \{1,\dots,n\}$.
\end{thm}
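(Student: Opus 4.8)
The plan is to prove the two implications separately; the forward direction is routine, while the converse carries all the content. For the easy direction, identify $H^2(\D^n)=H^2_{z_1}(\D)\otimes\cdots\otimes H^2_{z_n}(\D)$, so that $M_{z_i}$ is the ampliation of the unilateral shift $S$ in the $i$-th slot. If $\clq=\clq_1\otimes\cdots\otimes\clq_n$ with each $\clq_i\subseteq H^2_{z_i}(\D)$ a quotient module, then $P_{\clq}=P_{\clq_1}\otimes\cdots\otimes P_{\clq_n}$, and a direct computation gives $C_{z_i}=I\otimes\cdots\otimes(P_{\clq_i}S|_{\clq_i})\otimes\cdots\otimes I$, acting nontrivially only on the $i$-th leg. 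For distinct $i,j$ the operators $C_{z_i}$ and $C_{z_j}^*$ act on different legs, hence commute, so $[C_{z_j},C_{z_i}^*]=0$.

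For the converse I would induct on $n$, the essential content being the step that peels off one variable. Since $\clq$ is co-invariant, $C_{z_i}^*=M_{z_i}^*|_{\clq}$ for each $i$, and each $C_{z_i}$ is a pure ($C_{\cdot 0}$) contraction because $M_{z_i}^{*k}\to 0$ strongly on $H^2(\D^n)$. Looking at the last variable, put $\clq^{(0)}:=\clq\cap\ker C_{z_n}^*=\clq\cap\big(H^2(\D^{n-1})\otimes\mathbb{C}\big)$, the elements of $\clq$ independent of $z_n$. As each $M_{z_i}^*$ with $i<n$ commutes with $M_{z_n}^*$ and preserves $\clq$, it preserves $\clq^{(0)}$; thus $\clq^{(0)}=\clq'\otimes\mathbb{C}$ for a quotient module $\clq'\subseteq H^2(\D^{n-1})$.

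The hard part, and the only place the doubly commuting hypothesis is genuinely used, is to upgrade this bottom slice to a full factorization $\clq=\clq'\otimes\clq_n$ for a one-variable quotient module $\clq_n\subseteq H^2_{z_n}(\D)$. Rewriting the hypothesis through Lemma~\ref{comp} as $P_{\clq}M_{z_i}^*P_{\cls}M_{z_j}|_{\clq}=0$, equivalently $P_{\cls}M_{z_j}\clq\perp M_{z_i}\clq$ for distinct $i,j$, one obtains a separation of variables in the expansion $f=\sum_{m\ge0}z_n^m g_m$ of elements $f\in\clq$. This is exactly what is needed to show that the pure contraction $C_{z_n}$ rebuilds $\clq$ from its wandering data, $\clq=\bigvee_{m\ge0}C_{z_n}^m\clq^{(0)}$, and that this span is unitarily the tensor product $\clq'\otimes\clq_n$. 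I expect this decoupling to be the main obstacle: absent the commuting relations the $z_n$-tower over $\clq^{(0)}$ need not reassemble into a tensor product, and the slice $\clq'$ can fail to be genuinely independent of $z_n$.

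Finally I would close the induction. Once the splitting $\clq=\clq'\otimes\clq_n$ is in hand, the compressions $P_{\clq'}M_{z_i}|_{\clq'}$ of $M_{z_1},\dots,M_{z_{n-1}}$ to $\clq'$ are, under this factorization, precisely these operators tensored with $I_{\clq_n}$, so the identity $[C_{z_j},C_{z_i}^*]=0$ on $\clq$ descends to the corresponding identity on $\clq'$ for all distinct $i,j\le n-1$. The induction hypothesis then gives $\clq'=\clq_1\otimes\cdots\otimes\clq_{n-1}$, and together with the factor $\clq_n$ this yields $\clq=\clq_1\otimes\cdots\otimes\clq_n$; the base case $n=2$ is the single peeling step described above.
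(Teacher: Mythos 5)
Your forward direction is fine. Before addressing the converse, note that the paper itself contains no proof of this theorem: it is quoted as a known result of Izuchi--Nakazi--Seto \cite{INS} (for $n=2$) and Sarkar \cite{Sarkar} (for $n\geq 3$), so your argument must be judged on its own terms rather than against an internal proof.

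The converse has a genuine gap, and it is fatal as stated. Your plan hinges on the reconstruction $\clq=\bigvee_{m\ge 0}C_{z_n}^m\clq^{(0)}$ with $\clq^{(0)}=\clq\cap\ker C_{z_n}^*$, followed by identifying this span with $\clq'\otimes\clq_n$. But $\clq^{(0)}$ can be $\{0\}$ for a nonzero doubly commuting quotient module --- even for one that is already a tensor product. Take $n=2$ and $\clq=H^2_{z_1}(\D)\otimes\clq_\theta$, where $\clq_\theta=H^2_{z_2}(\D)\ominus\theta(z_2)H^2_{z_2}(\D)$ and $\theta$ is a nonconstant inner function with $\theta(0)\neq 0$, e.g.\ $\theta(z_2)=\frac{z_2-1/2}{1-z_2/2}$. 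This $\clq$ is doubly commuting (its compressions act on separate tensor legs), yet an element of $\clq$ independent of $z_2$ has the form $g(z_1)\otimes 1$, and $g\otimes 1\in H^2_{z_1}(\D)\otimes\clq_\theta$ forces $1\in\clq_\theta$, i.e.\ $\theta(0)=0$; hence $\clq^{(0)}=\{0\}$. Your induction would then produce $\clq'=\{0\}$ and conclude $\clq=\{0\}$, which is absurd. The root of the error is treating the pure contraction $C_{z_n}$ as if it admitted a Wold-type decomposition over $\ker C_{z_n}^*$: that is valid for isometries, but $C_{z_n}$ is not an isometry here (on a model space the compressed shift has nonzero defects). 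For a $C_{\cdot 0}$ contraction the space that does regenerate $\clq$, via the identity $I_{\clq}-C_{z_n}^mC_{z_n}^{*m}=\sum_{k<m}C_{z_n}^k(I_{\clq}-C_{z_n}C_{z_n}^*)C_{z_n}^{*k}$ and purity, is the defect space $\cld_{C_{z_n}^*}=\overline{\text{ran}}(I_{\clq}-C_{z_n}C_{z_n}^*)$, not $\ker C_{z_n}^*$; and in the product case $\clq'\otimes\clq_n$ this defect space equals $\clq'\otimes\cld$ for a one-dimensional subspace $\cld\subseteq\clq_n$ which in general contains no constants. So the entire ``bottom slice'' bookkeeping, and the induction built on it, would have to be redone around the defect spaces (or by an altogether different route, as in \cite{INS} and \cite{Sarkar}); this is not a repairable detail of the write-up but a different argument.
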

From Beurling's theorem for $H^2(\D)$, either $\clq_i = \clq_{\vp_i}: = H_{z_i}^2(\D) \ominus \vp_i(z_i) H_{z_i}^2(\D)$ corresponding to an inner function $\vp_{i}(z_i)$, or $\clq_i = H_{z_i}^2(\D)$. We refer the readers to the excellent article \cite{DY} for certain results related to the compression operators on these quotient modules even before the above structure was discovered. Regarding some recent results, we refer to the articles \cite{QY, Sarkar2} for Beurling-type representation of the corresponding submodules.  For the sake of computation, let $\Lambda = \{{\lambda}_1,\ldots,{\lambda}_k\}$, where ${\lambda}_1 < \ldots <{\lambda}_k$. The structure of $\cls_{\Phi_{\Lambda}}$ gives the following orthogonal decomposition.
\begin{equation}\label{structure}
\cls_{\Phi_{\Lambda}} = \cls_{1} \oplus\ldots \oplus  \cls_{k},
\end{equation}
where $\cls_{d} := \mbox{ran } M_{\vp_{{\lambda}_d}} M_{\vp_{{\lambda}_d}}^* \big( \underset{{\lambda}_k \in \Lambda; {\lambda}_k > {\lambda}_d}{\Pi} (I_{H^2(\D^n)} - M_{\vp_{{\lambda}_k}} M_{\vp_{{\lambda}_k}}^*) \big)$ for all $d \in \{1,\ldots,k\}$ (see for instance,  \cite[Lemma 2.5]{Sarkar}).  With respect to the above orthogonal decomposition, we get
\begin{equation}\label{proj2}
P_{\cls_{\Phi_{\Lambda}}} = P_{\cls_{1}} \oplus \ldots \oplus P_{\cls_{k}},
\end{equation}
that is, the orthogonal projection onto submodule $\cls_{\Phi_{\Lambda}}$ is the sum of certain orthogonal projections. We are now ready to establish the main result of this section. 

\begin{proof}[Proof for Theorem \ref{inner based submodule}]
Let us begin with the following submodule corresponding to the subset $\Lambda \subseteq \{1,\ldots,n\}$,
	\[
\cls_{\Phi_{\Lambda}} =  \sum_{{\lambda} \in \Lambda} \vp_{\lambda}(z_{\lambda})H^2(\D^n),
	\]
where $\{\vp_{\lambda}(z_{\lambda})\}_{{\lambda} \in \Lambda}$ is the collection of inner functions on $\D^n$.  From the structure in (\ref{structure}), it is clear that the submodule $\cls_{\Phi_{\Lambda}}$ is $M_{z_l}$-reducing for all $l \in \{1,\ldots,n\} \setminus \Lambda$. Therefore, using the definition of evaluation operators we get
\[
(I_{ \cls_{\Phi_{\Lambda}} }-E_{z_l}E_{z_l}^*) = P_{ \cls_{\Phi_{\Lambda}} } M_{z_l} M_{z_l}^*P_{ \cls_{\Phi_{\Lambda}} } = P_{ \cls_{\Phi_{\Lambda}} } M_{z_l} M_{z_l}^*,
\]
and similarly,
\[
(I_{ \cls_{\Phi_{\Lambda}} }-R_{z_l}R_{z_l}^*) = P_{ \cls_{\Phi_{\Lambda}} } - M_{z_l} P_{ \cls_{\Phi_{\Lambda}} } M_{z_l}^* = P_{ \cls_{\Phi_{\Lambda}} } - M_{z_l}  M_{z_l}^* P_{ \cls_{\Phi_{\Lambda}} } = (I_{H^2(\D^n)} - M_{z_l}  M_{z_l}^*) P_{\cls_{\Phi_{\Lambda}} }.
\]
This implies that $(I_{ \cls_{\Phi_{\Lambda}} }-E_{z_l}E_{z_l}^*) (I_{ \cls_{\Phi_{\Lambda}} }-R_{z_l}R_{z_l}^*) = 0$ for all $l \in \{1,\ldots,n\} \setminus \Lambda$. Now let us analyze for the co-ordinates inside $\Lambda$. For the sake of computations, let us denote
 \[
 \Delta_{i,j} = (I_{\cls_{\Phi_{\Lambda}} }-E_{z_{{\lambda}_i}}E_{z_{{\lambda}_i}}^*)(I_{\cls_{\Phi_{\Lambda}} }-R_{z_{{\lambda}_i}}R_{z_{{\lambda}_i}}^*)(I_{\cls_{\Phi_{\Lambda}} }-E_{z_{{\lambda}_j}}E_{z_{{\lambda}_j}}^*)(I_{ \cls_{\Phi_{\Lambda}} }-R_{z_{{\lambda}_j}}R_{z_{{\lambda}_j}}^*).
 \]
for distinct ${\lambda}_i,{\lambda}_j \in \Lambda$. From Lemma \ref{1st factor},  we know that 
\[ 
\Delta_{i,j} = (I_{\cls_{\Phi_{\Lambda}} }-R_{z_{{\lambda}_i}}R_{z_{{\lambda}_i}}^*-E_{z_{{\lambda}_i}}E_{z_{{\lambda}_i}}^*)(I_{\cls_{\Phi_{\Lambda}} }-R_{z_{{\lambda}_j}}R_{z_{{\lambda}_j}}^*-E_{z_{{\lambda}_j}}E_{z_{{\lambda}_j}}^*). 
\]
Using Lemma \ref{ABC}, we get
\[
\Delta_{i,j}=P_{\cls_{\Phi_{\Lambda}}}M_{z_{{\lambda}_i}}P_{\clq_{\Phi_{\Lambda}}}M^*_{z_{{\lambda}_i}}P_{\cls_{\Phi_{\Lambda}}}M_{z_{{\lambda}_j}}P_{\clq_{\Phi_{\Lambda}}}M^*_{z_{{\lambda}_j}}|_{\cls_{\Phi_{\Lambda}}}.
\]
We know that 
\[
P_{\cls_{\Phi_{\Lambda}}}= \sum_{d=1}^k M_{\vp_{{\lambda}_d}} M_{\vp_{{\lambda}_d}}^* \big( \underset{{\lambda} \in \Lambda; {\lambda} > {\lambda}_d}{\Pi} (I_{H^2(\D^n)} - M_{\vp_{{\lambda}}} M_{\vp_{{\lambda}}}^*) \big)
\]
Using the above identification, for any ${\lambda}_i \in \Lambda$, we get
\[
\begin{split}
&P_{\cls_{\Phi_{\Lambda}}} M_{z_{{\lambda}_i}} P_{\clq_{\Phi_{\Lambda}}} \\
&= \Big(\sum_{d=1}^k M_{\vp_{{\lambda}_d}} M_{\vp_{{\lambda}_d}}^* \big( \underset{{\lambda} \in \Lambda; {\lambda} > {\lambda}_d}{\Pi} (I_{H^2(\D^n)} - M_{\vp_{{\lambda}}} M_{\vp_{{\lambda}}}^*) \big) \Big) M_{z_{{\lambda}_i}} \underset{\lambda \in \Lambda}{\Pi} (I_{H^2(\D^n)} - M_{\vp_{{\lambda}}} M_{\vp_{{\lambda}}}^*) \\
&= \Big(\sum_{d=1}^k M_{\vp_{{\lambda}_d}} M_{\vp_{{\lambda}_d}}^* \big( \underset{{\lambda} \in \Lambda; {\lambda} > {\lambda}_d}{\Pi} (I_{H^2(\D^n)} - M_{\vp_{{\lambda}}} M_{\vp_{{\lambda}}}^*) \big) \Big)\\
& \underset{{\lambda} \in \Lambda; \lambda \neq \lambda_i}{\overset{n}{\Pi}} (I_{H^2(\D^n)} - M_{\vp_{{\lambda}}} M_{\vp_{{\lambda}}}^*) M_{z_{{\lambda}_i}}(I_{H^2(\D^n)} - M_{\vp_{{\lambda}_i}} M_{\vp_{{\lambda}_i}}^*)\\
&= \Big(\sum_{d=1}^{i-1} M_{\vp_{{\lambda}_d}} M_{\vp_{{\lambda}_d}}^* \big( \underset{{\lambda} \in \Lambda}{\Pi} (I_{H^2(\D^n)} - M_{\vp_{{\lambda}}} M_{\vp_{{\lambda}}}^*) \big) \Big) M_{z_{{\lambda}_i}} (I_{H^2(\D^n)} - M_{\vp_{{\lambda}_i}} M_{\vp_{{\lambda}_i}}^*)\\
&+\Big(\sum_{d=i}^k M_{\vp_{{\lambda}_d}} M_{\vp_{{\lambda}_d}}^* \big( \underset{{\lambda} \in \Lambda; \lambda \neq \lambda_i}{\Pi} (I_{H^2(\D^n)} - M_{\vp_{{\lambda}}} M_{\vp_{{\lambda}}}^*) \big) \Big) M_{z_{{\lambda}_i}} (I_{H^2(\D^n)} - M_{\vp_{{\lambda}_i}} M_{\vp_{{\lambda}_i}}^*)\\
&= M_{\vp_{{\lambda}_i}} M_{\vp_{{\lambda}_i}}^*  \big( \underset{{\lambda} \in \Lambda; {\lambda} \neq {\lambda}_i}{\Pi} (I_{H^2(\D^n)} - M_{\vp_{{\lambda}}} M_{\vp_{{\lambda}}}^*) \big) M_{z_{{\lambda}_i}} (I_{H^2(\D^n)} - M_{\vp_{{\lambda}_i}} M_{\vp_{{\lambda}_i}}^*)\\
&= M_{\vp_{{\lambda}_i}} M_{\vp_{{\lambda}_i}}^* M_{z_{{\lambda}_i}} \big( \underset{{\lambda} \in \Lambda}{\Pi} (I_{H^2(\D^n)} - M_{\vp_{{\lambda}}} M_{\vp_{{\lambda}}}^*) \big),
\end{split}
\]
and therefore, 
\[
P_{\clq_{\Phi_{\Lambda}}} M_{z_{{\lambda}_i}}^* P_{\cls_{\Phi_{\Lambda}}} = \big( \underset{\lambda \in \Lambda}{\Pi} (I_{H^2(\D^n)} - M_{\vp_{\lambda}} M_{\vp_{\lambda}}^*) \big) M_{z_{\lambda_i}}^* M_{\vp_{\lambda_i}} M_{\vp_{\lambda_i}}^*.
\]
This immediately gives us
\[
\begin{split}
&P_{\clq_{\Phi_{\Lambda}}}M_{z_{\lambda_i}}^*P_{\cls_{\Phi_{\Lambda}}}M_{z_{\lambda_j}}P_{\clq_{\Phi_{\Lambda}}} \\
&=  \big( \underset{\lambda \in \Lambda}{\Pi} (I_{H^2(\D^n)} - M_{\vp_{\lambda}} M_{\vp_{\lambda}}^*) \big) M_{z_{\lambda_i}}^* M_{\vp_{\lambda_i}} M_{\vp_{\lambda_i}}^* M_{\vp_{\lambda_j}} M_{\vp_{\lambda_j}}^* M_{z_{\lambda_j}} \big( \underset{\lambda \in \Lambda}{\Pi} (I_{H^2(\D^n)} - M_{\vp_{\lambda}} M_{\vp_{\lambda}}^*) \big)\\
&=  \big( \underset{\lambda \in \Lambda}{\Pi} (I_{H^2(\D^n)} - M_{\vp_{\lambda}} M_{\vp_{\lambda}}^*) \big) M_{\vp_{\lambda_j}} M_{\vp_{\lambda_j}}^* M_{z_{\lambda_j}}  M_{z_{\lambda_i}}^* M_{\vp_{\lambda_i}} M_{\vp_{\lambda_i}}^* \big( \underset{\lambda \in \Lambda}{\Pi} (I_{H^2(\D^n)} - M_{\vp_{\lambda}} M_{\vp_{\lambda}}^*) \big)\\
&=0,
\end{split}
\]
and hence, 
\[
\Delta_{i,j} = P_{\cls_{\Phi_{\Lambda}}}M_{z_{\lambda_i}}P_{\clq_{\Phi_{\Lambda}}}M^*_{z_{\lambda_i}}P_{\cls_{\Phi_{\Lambda}}}M_{z_{\lambda_j}}P_{\clq_{\Phi_{\Lambda}}}M^*_{z_{\lambda_j}}|_{\cls_{\Phi_{\Lambda}}} = 0,
\]
for any distinct $\lambda_i,\lambda_j \in \Lambda$.
For proving the converse direction, let us assume conditions (\ref{condns1}), and (\ref{condns2}). The second condition is equivalent to $\Delta_{i,j}=0$ for all distinct $\lambda_i,\lambda_j \in \Lambda$. Now, for any $h,k \in \cls$
\[
\begin{split}
0=\la \Delta_{i,j} h, k \ra =\la P_{\cls}M_{z_{\lambda_i}}P_{\clq}M^*_{z_{\lambda_i}}P_{\cls}M_{z_{\lambda_j}}P_{\clq}M^*_{z_{\lambda_j}}h, k \ra
=\la P_{\clq}M^*_{z_{\lambda_i}}P_{\cls}M_{z_{\lambda_j}}P_{\clq}M^*_{z_{\lambda_j}}h , P_{\clq}M^*_{z_{\lambda_i}}k \ra,
\end{split}
\]
This implies that $P_{\clq}M^*_{z_{\lambda_i}}P_{\cls}M_{z_{\lambda_j}}P_{\clq}M^*_{z_{\lambda_j}}\cls\subseteq \text{ker}(P_{\cls}M_{z_{\lambda_i}}|_{\clq})$. From condition (\ref{range}) and Lemma \ref{comp}, we get
\[
[C_{z_{\lambda_j}}, C_{z_{\lambda_i}}^*]\cld_{C_{z_{\lambda_j}}}\subseteq \text{ker}(D_{z_{\lambda_i}}).
\]
 By Lemma \ref{containment} (iii), we get $[C_{z_{\lambda_j}}, C_{z_{\lambda_i}}^*]\cld_{C_{z_{\lambda_j}}}\subseteq \cld_{z_{\lambda_i}}$. Consequently, $[C_{z_{\lambda_j}}, C_{z_{\lambda_i}}^*]\cld_{C_{z_{\lambda_j}}}={0}$. Thus 
\[
\cld_{C_{z_{\lambda_j}}}\subseteq \text{ker}([C_{z_{\lambda_j}}, C_{z_{\lambda_i}}^*]).
\]
 
On the other hand using Lemma \ref{containment} (iv), we obtain, $\text{ker}(D_{C_{z_{\lambda_j}}})\subseteq \text{ker}([C_{z_{\lambda_j}}, C_{z_{\lambda_i}}^*])$. Hence
 \[
 \clq=\cld_{C_{z_{\lambda_j}}}\oplus \text{ker}(D_{C_{z_{\lambda_j}}})\subseteq \text{ker}([C_{z_{\lambda_j}}, C_{z_{\lambda_i}}^*]).
 \]
This gives, $[C_{z_{\lambda_j}}, C_{z_{\lambda_i}}^*]=0$ for all distinct $\lambda_i,\lambda_j \in \Lambda$. Following the proof of Theorem \ref{Beurling-type}, we see that condition (\ref{condns1}) imply that $\cls$ is $M_{z_i}$-reducing for all $i \in \{1,\ldots,n\} \setminus \Lambda$.  This further implies that for distinct $i,j \in \{1,\ldots,n\} \setminus \Lambda$, we get
\[
C_{z_i} C_{z_j}^* = P_{\clq} M_{z_i}M_{z_j}^* P_{\clq} = P_{\clq} M_{z_j}^* M_{z_i} P_{\clq} = P_{\clq} M_{z_j}^* P_{\clq} M_{z_i} = M_{z_j}^* P_{\clq} M_{z_i}  = C_{z_j}^* C_{z_i}.
\]
Combining the above, we get $[C_{z_i},C_{z_j}^*]=0$, for all distinct $i,j \in \{1,\ldots,n\}$. Using Theorem \ref{co-doubly}, we observe that there must exist quotient modules $\clq_i \subseteq H_{z_i}^2(\D)$ for all $i \in \{1,\ldots,n\}$ such that 
\[
\cls^{\perp} = \clq_1 \otimes \ldots \otimes \clq_n.
\]
Furthermore, since $\cls$ is $M_{z_j}$-reducing for all $j \in \{1,\ldots,n\} \setminus \Lambda$, we get that 
\[
M_{z_j} P_{\clq_j} = P_{\clq_j}  M_{z_j},
\]
which implies $\clq_j = H_{z_j}^2(\D)$ for all $j \in \{1,\ldots,n\} \setminus \Lambda$. This follows from our assumption that $\cls$ is a proper subspace of $H^2(\D^n)$.  Thus,  we must get
 \[
\cls = \sum_{\lambda \in \Lambda} \vp_{\lambda}(z_{\lambda}) H^2(\D^n).
\]
This completes the proof.
\end{proof}
\begin{cor}
Given a submodule $\cls \subseteq H^2(\D^n)$, there exists a collection of one-variable inner functions $\{\vp_i(z_i)\}_{i \in \{1,\ldots,n\}}$ on $H^2(\D^n)$, for which $\cls = \sum_{i =1}^n \vp_i(z_i) H^2(\D^n)$, if and only if 
\[
(I_{\cls}-E_{z_i}E_{z_i}^*) (I_{\cls}-R_{z_i}R_{z_i}^*)(I_{\cls}-R_{z_j}R_{z_j}^*)(I_{\cls}-E_{z_j}E_{z_j}^*)=0,
\]
for all distinct $i,j \in \{1,\dots,n\}$.
\end{cor}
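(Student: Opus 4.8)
The plan is to recognize the corollary as the full-index instance $\Lambda = \{1,\ldots,n\}$ of Theorem \ref{inner based submodule}, and to reconcile the slight difference in the ordering of the operator factors. When $\Lambda = \{1,\ldots,n\}$ the complement $\{1,\ldots,n\}\setminus\Lambda$ is empty, so condition (\ref{condns1}) imposes no constraint whatsoever, and only (\ref{condns2}) survives. Writing $F_j := (I_{\cls}-E_{z_j}E_{z_j}^*)(I_{\cls}-R_{z_j}R_{z_j}^*)$ for each $j$, condition (\ref{condns2}) reads exactly $F_iF_j = 0$ for all distinct $i,j \in \{1,\ldots,n\}$.

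The only genuine point to verify is that the product displayed in the corollary coincides with $F_iF_j$. First I would isolate the first two factors, which by definition form $F_i$. For the last two factors I would invoke Lemma \ref{1st factor}: since $(I_{\cls}-R_{z_j}R_{z_j}^*)(I_{\cls}-E_{z_j}E_{z_j}^*) = I_{\cls}-R_{z_j}R_{z_j}^*-E_{z_j}E_{z_j}^*$ is self-adjoint, the two defects $(I_{\cls}-R_{z_j}R_{z_j}^*)$ and $(I_{\cls}-E_{z_j}E_{z_j}^*)$ commute, whence $(I_{\cls}-R_{z_j}R_{z_j}^*)(I_{\cls}-E_{z_j}E_{z_j}^*) = F_j$. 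Therefore the corollary's expression equals $F_iF_j$; that is, it is \emph{literally} condition (\ref{condns2}), only with the last pair of factors transposed. Applying Theorem \ref{inner based submodule} with $\Lambda = \{1,\ldots,n\}$ then yields the desired equivalence for every proper submodule $\cls$.

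Finally I would dispose of the boundary case $\cls = H^2(\D^n)$, which the theorem excludes but the corollary (allowing constant inner functions) should cover. Here $P_{\cls} = I$, so $I_{\cls}-E_{z_j}E_{z_j}^* = M_{z_j}M_{z_j}^*$ and $I_{\cls}-R_{z_j}R_{z_j}^* = I - M_{z_j}M_{z_j}^*$, giving $F_j = M_{z_j}M_{z_j}^*(I-M_{z_j}M_{z_j}^*) = 0$ because $M_{z_j}^*M_{z_j} = I$; thus the condition holds trivially, and one may write $\cls = H^2(\D^n) = \vp_1(z_1)H^2(\D^n)$ with $\vp_1$ a unimodular constant, so both directions of the equivalence hold in this case as well. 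I expect no serious obstacle here: the entire content is the factor-reordering afforded by Lemma \ref{1st factor}, together with the mild bookkeeping needed to separate the proper-submodule case from the full space.
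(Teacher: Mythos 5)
Your proposal is correct and follows exactly the route the paper intends: the corollary is the specialization $\Lambda=\{1,\ldots,n\}$ of Theorem \ref{inner based submodule}, with condition (\ref{condns1}) becoming vacuous and the displayed product identified with condition (\ref{condns2}) via the commutativity of $(I_{\cls}-R_{z_j}R_{z_j}^*)$ and $(I_{\cls}-E_{z_j}E_{z_j}^*)$ from Lemma \ref{1st factor}. Your additional handling of the non-proper case $\cls=H^2(\D^n)$ (and the implicit observation that for a proper submodule any such representation forces the $\vp_i$ to be non-constant) is bookkeeping the paper leaves unstated, and it is done correctly.
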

We can identify a new necessary condition for submodules of type $\cls_{\Phi_{\Lambda}}$ using our methods.
\begin{prop}
Given any subset $\Lambda \subseteq \{1,\ldots,n\}$, let $\cls_{\Phi_{\Lambda}} = \sum_{\lambda \in \Lambda} \vp_{\lambda}(z_{\lambda}) H^2(\D^n)$ be a submodule of $H^2(\D^n)$.  Then $$[R_{z_i}, R_{z_j}^*]^2 = 0,$$ for all distinct $i,j \in\{1,\ldots,n\}$.
\end{prop}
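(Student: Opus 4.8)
The plan is to reduce the claim to the vanishing of the cross-commutators of the compression operators on the quotient module, which is immediate from the tensor-product structure, and then to transfer that vanishing back to the restriction side by a direct algebraic manipulation.

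First I would record the structure of the quotient module. As noted earlier in this section,
\[
\clq_{\Phi_{\Lambda}} = \cls_{\Phi_{\Lambda}}^{\perp} = \underset{\lambda \in \Lambda}{\otimes}\, \clq_{\vp_\lambda},
\]
which is a genuine tensor product over all $n$ slots, with the full space $H_{z_l}^2(\D)$ occupying each slot $l \in \{1,\ldots,n\}\setminus\Lambda$. Since $\clq_{\Phi_{\Lambda}}$ is a nonzero tensor product of one-variable quotient modules (nonzero because the $\vp_\lambda$ are non-constant), Theorem \ref{co-doubly} applies and yields
\[
[C_{z_j}, C_{z_i}^*] = 0 \quad (\text{for all distinct } i,j \in \{1,\ldots,n\}).
\]

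Next I would expand $[R_{z_j}^*, R_{z_i}]^2$ directly. By Lemma \ref{commutator rest} we have $[R_{z_j}^*, R_{z_i}] = P_{\cls}M_{z_i}P_{\clq}M_{z_j}^*|_{\cls}$, and composing this operator with itself gives
\[
[R_{z_j}^*, R_{z_i}]^2 = P_{\cls}M_{z_i}\,\big(P_{\clq}M_{z_j}^* P_{\cls}M_{z_i}P_{\clq}\big)\, M_{z_j}^*|_{\cls}.
\]
The bracketed middle factor is exactly the operator $[C_{z_i}, C_{z_j}^*]$ supplied by Lemma \ref{comp} (after interchanging the roles of $i$ and $j$), since that lemma identifies $P_{\clq}M_{z_j}^* P_{\cls}M_{z_i}|_{\clq}$ with $[C_{z_i}, C_{z_j}^*]$. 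By the previous step this factor vanishes, so $[R_{z_j}^*, R_{z_i}]^2 = 0$. Finally, since $[R_{z_i}, R_{z_j}^*] = -[R_{z_j}^*, R_{z_i}]$, squaring gives $[R_{z_i}, R_{z_j}^*]^2 = 0$ for all distinct $i,j$, as required.

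I expect no serious obstacle: the content lies in the two observations (the tensor-product structure forces the compression commutators to vanish, and the squared restriction commutator factors through one such compression commutator), while the only delicate point is the projection bookkeeping needed to see that the inner copies of $P_{\clq}$ make the middle factor act as $[C_{z_i}, C_{z_j}^*]$ on $\clq$. Alternatively, one could route through Proposition \ref{square}: it suffices to verify $[R_{z_j}^*, R_{z_i}](I_{\cls}-R_{z_i}R_{z_i}^*-E_{z_i}E_{z_i}^*)=0$, which, via the lemma linking this identity to $D_{C_{z_i}}[C_{z_i},C_{z_j}^*]D_{C_{z_i}}=0$, again follows at once from $[C_{z_i},C_{z_j}^*]=0$.
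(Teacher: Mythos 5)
Your proof is correct, but it takes a genuinely different route from the paper's. The paper stays entirely on the submodule side: it invokes Proposition \ref{square}, so that it suffices to prove $[R_{z_j}^*,R_{z_i}](I_{\cls}-R_{z_i}R_{z_i}^*-E_{z_i}E_{z_i}^*)=0$, and then extracts this from the two characterizing conditions of Theorem \ref{inner based submodule}: condition (\ref{condns1}) kills the factor outright for indices outside $\Lambda$, while condition (\ref{condns2}), rewritten via Lemma \ref{1st factor} as $(I_{\cls}-R_{z_{\lambda_i}}R_{z_{\lambda_i}}^*-E_{z_{\lambda_i}}E_{z_{\lambda_i}}^*)(I_{\cls}-R_{z_{\lambda_j}}R_{z_{\lambda_j}}^*-E_{z_{\lambda_j}}E_{z_{\lambda_j}}^*)=0$, is combined with the Douglas-factorization Lemma \ref{submodule factor}(ii) and a commuting-positive-operators/square-root argument to reach the same conclusion for indices inside $\Lambda$. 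You instead pass to the quotient module: the tensor-product form of $\clq_{\Phi_{\Lambda}}$ gives $[C_{z_j},C_{z_i}^*]=0$ by the easy direction of Theorem \ref{co-doubly}, and then the factorization $[R_{z_j}^*,R_{z_i}]^2=P_{\cls}M_{z_i}\big(P_{\clq}M_{z_j}^*P_{\cls}M_{z_i}P_{\clq}\big)M_{z_j}^*|_{\cls}$, with the middle factor identified as $[C_{z_i},C_{z_j}^*]P_{\clq}$ via Lemmas \ref{commutator rest} and \ref{comp}, finishes immediately. Your argument is shorter and isolates a clean general fact worth noting: for \emph{any} submodule, vanishing of the compression cross-commutators forces $[R_{z_j}^*,R_{z_i}]^2=0$, which complements the restriction-to-compression implications of Section \ref{Douglas}; the paper's proof, by contrast, deliberately exercises its new $E$--$R$ defect conditions, which is the theme of that section. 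One small housekeeping point: the proposition as stated does not assume the $\vp_\lambda$ are non-constant, and Theorem \ref{co-doubly} requires $\clq\neq\{0\}$; if some $\vp_\lambda$ is a unimodular constant then $\cls=H^2(\D^n)$ and the claim is trivial since the shifts $M_{z_i}$ doubly commute, so your parenthetical assumption costs nothing, but you should say so explicitly (the paper's own proof has the same implicit restriction, since Theorem \ref{inner based submodule} is stated for non-constant inner functions).
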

\begin{proof}
From Proposition \ref{square}, it is enough to show $[R_{z_j}^*, R_{z_i}](I_{\cls}-R_{z_i}R_{z_i}^*-E_{z_i}E_{z_i}^*)=0$ for distinct $i,j \in \{1,\ldots,n\}$. By Theorem \ref{inner based submodule}, we know that $\cls = \cls_{\Phi_{\Lambda}}$ if and only if the following conditions are satisfied
\begin{equation}\label{c1}
(I_{\cls_{\Phi_{\Lambda}}}-E_{z_l}E_{z_l}^*)(I_{\cls_{\Phi_{\Lambda}}}-R_{z_l}R_{z_l}^*)=0 \quad (\forall l\in \{1,\dots,n\} \setminus \Lambda),
\end{equation}
\begin{equation}\label{c2}
(I_{\cls_{\Phi_{\Lambda}} }-E_{z_{i}}E_{z_{i}}^*)(I_{\cls_{\Phi_{\Lambda}} }-R_{z_{i}}R_{z_{i}}^*)(I_{\cls}-E_{z_{j}}E_{z_{j}}^*)(I_{\cls_{\Phi_{\Lambda}} }-R_{z_{j}}R_{z_{j}}^*)=0 \quad (\forall \text{ distinct }i, j \in \Lambda).
\end{equation}
Note that the first condition implies that $I_{\cls_{\Phi_{\Lambda}} }-R_{z_l}R_{z_l}^*-E_{z_l}E_{z_l}^* = 0$ for all $l \in \{1,\ldots,n\} \setminus \Lambda$, which further implies that
\[
[R_{z_j}^*, R_{z_l}](I_{\cls_{\Phi_{\Lambda}} }-R_{z_l}R_{z_l}^*-E_{z_l}E_{z_l}^*) = 0 \quad (\forall j \in \{1,\ldots,n\}).
\]
Hence, from Proposition \ref{square},we get for all $ l \in \{1,\ldots,n\} \setminus \Lambda$,
\[
[R_{z_j}^*, R_{z_l}]^2=0 \quad (\forall j \in \{1,\ldots,n\} \setminus l).
\]
So, we only need to show that $[R_{z_{{\lambda}_j}}^*, R_{z_{{\lambda}_i}}]^2=0$ for all distinct ${\lambda}_i,{\lambda}_j \in \Lambda$. Using Lemma \ref{1st factor}, the second condition (\ref{c2}), is again equivalent to the following,
\[
(I_{\cls_{\Phi_{\Lambda}} }-R_{z_{{\lambda}_i}}R_{z_{{\lambda}_i}}^*-E_{z_{{\lambda}_i}}E_{z_{{\lambda}_i}}^*)(I_{\cls_{\Phi_{\Lambda}} }-R_{z_{{\lambda}_j}}R_{z_{{\lambda}_j}}^*-E_{z_{{\lambda}_j}}E_{z_{{\lambda}_j}}^*)=0.
\]
From Lemma \ref{submodule factor} (ii),  we obtain that
\[
\begin{split}
&[R_{z_{\lambda_j}}^*, R_{z_{\lambda_i}}](I_{\cls_{\Phi_{\Lambda}} }-R_{z_{\lambda_i}}R_{z_{\lambda_i}}^*-E_{z_{\lambda_i}}E_{z_{\lambda_i}}^*)^{1/2}\\
&=B_{i,j} (I_{\cls_{\Phi_{\Lambda}} }-R_{z_{\lambda_j}}R_{z_{\lambda_j}}^*-E_{z_{\lambda_j}}E_{z_{\lambda_j}}^*)^{1/2}(I_{\cls_{\Phi_{\Lambda}} }-R_{z_{\lambda_i}}R_{z_{\lambda_i}}^*-E_{z_{\lambda_i}}E_{z_{\lambda_i}}^*)^{1/2}.
\end{split}
\]
Since $(I_{\cls_{\Phi_{\Lambda}} }-R_{z_{\lambda_i}}R_{z_{\lambda_i}}^*-E_{z_{\lambda_i}}E_{z_{\lambda_i}}^*)$ is a non-negative operator and $(I_{\cls_{\Phi_{\Lambda}} }-R_{z_{\lambda_i}} R_{z_{\lambda_i}}^*-E_{z_{\lambda_i}}E_{z_{\lambda_i}}^*)$ commutes with $(I_{\cls_{\Phi_{\Lambda}} }-R_{z_{\lambda_j}}R_{z_{\lambda_j}}^*-E_{z_{\lambda_j}}E_{z_{\lambda_j}}^*)$ it follows that
\[
(I_{\cls_{\Phi_{\Lambda}} }-R_{z_{\lambda_i}}R_{z_{\lambda_i}}^*-E_{z_{\lambda_i}}E_{z_{\lambda_i}}^*)(I_{\cls_{\Phi_{\Lambda}} }-R_{z_{\lambda_j}}R_{z_{\lambda_j}}^*-E_{z_{\lambda_j}}E_{z_{\lambda_j}}^*)=0,
\]
if and only if 
\[
(I_{\cls_{\Phi_{\Lambda}} }-R_{z_{\lambda_i}}R_{z_{\lambda_i}}^*-E_{z_{\lambda_i}}E_{z_{\lambda_i}}^*)^{1/2}(I_{\cls_{\Phi_{\Lambda}} }-R_{z_{\lambda_j}}R_{z_{\lambda_j}}^*-E_{z_{\lambda_j}}E_{z_{\lambda_j}}^*)^{1/2}=0.
\]
This implies that $[R_{z_{\lambda_j}}^*, R_{z_{\lambda_i}}](I_{\cls_{\Phi_{\Lambda}} }-R_{z_{\lambda_i}}R_{z_{\lambda_i}}^*-E_{z_{\lambda_i}}E_{z_{\lambda_i}}^*)^{1/2}=0$ for all distinct $\lambda_i, \lambda_j \in \Lambda$. Combining this result, along with the previous observation, we get $[R_{z_{j}}^*, R_{z_{i}}](I_{\cls_{\Phi_{\Lambda}} }-R_{z_{i}}R_{z_{i}}^*-E_{z_{i}}E_{z_{i}}^*)^{1/2}=0$ for all distinct $i, j \in \{1,\ldots,n\}$. This completes the proof.
\end{proof}

\begin{rem}
The condition $[R_{z_i}, R_{z_j}^*]^2 = 0$ is not sufficient. For instance, let $\theta(\bm{z}) = \frac{z_1 - \alpha_1}{1 - \bar{\alpha_1} z_1} \frac{z_2 - \alpha_2}{1 - \bar{\alpha_2} z_2}$, be an inner function corresponding to $\alpha_1, \alpha_2 \in \D$. Then if we consider the Beurling-type submodule $\cls = \theta(\bm{z}) H^2(\D^n)$, then we know that $[R_{z_i}, R_{z_j}^*] = 0$, and hence, $[R_{z_i}, R_{z_j}^*]^2=0$. However, $\theta(\bm{z})$ does not depend only on a single variable.
\end{rem}
It is natural question to ask whether condition (\ref{c2}) can be realized as a condition on certain compressions, say
\[
P_{\clm_i} (I_{\cls}-R_{z_i}R_{z_i}^*)(I_{\cls}-R_{z_j}R_{z_j}^*) P_{\clm_j} = 0,
\]
for all distinct $i,j \in \{1,\ldots,n\}$. Here $P_{\clm_i}$ is an orthogonal projection onto a closed subspace $\clm_i$ for $i=1,2$.  One way this is possible is when $E_i$ is a partial isometry for all $i \in \{ 1,\ldots,n \}$. Recall that an operator $V$ on a Hilbert space $\clh$ is said to be a partial isometry if $V$ is an isometry on $\clh \ominus \ker V$. It is well known that a partial isometry is characterized by the following equivalent conditions:\\
$(i)$ $VV^*$ is a projection.\\
$(ii)$ $VV^*V = V$.\\
We determine this behaviour of the evaluation operators in the following result.
\begin{thm}\label{partial}
Let $\cls_{\Phi_{\Lambda}} =  \underset{{\lambda \in \Lambda}}{\sum} \vp_{\lambda}(z_{\lambda}) H^2(\D^n)$ be a submodule of $H^2(\D^n)$ corresponding to non-constant inner functions $\{\vp_{\lambda}(\lambda)\}_{\lambda \in \Lambda}$. Then $E_{z_{\lambda}}$ is a partial isometry for any $\lambda \in \Lambda$ if and only if $\vp_{\lambda}(0) = 0$. Moreover, in this case, there exists a submodule $\cls_j$ of $H^2(\D^n)$ such that $I_{\cls_{\Phi_{\Lambda}}}-E_{z_{\lambda_j}}E_{z_{\lambda_j}}^*=P_{z_{\lambda_j}\cls_j}$, that is, the orthogonal projection onto the submodule $z_{\lambda_j}\cls_j$.
\end{thm}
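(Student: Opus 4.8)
The plan is to reduce the entire statement to the single tensor factor in the $z_\lambda$-variable. Throughout write $\cls=\cls_{\Phi_\Lambda}$ and split $H^2(\D^n)=H_{z_\lambda}^2(\D)\otimes H^2_{\widehat\lambda}$, where $H^2_{\widehat\lambda}:=\bigotimes_{i\neq\lambda}H_{z_i}^2(\D)$. Since $\clq_{\Phi_\Lambda}=\bigotimes_{i}\clq_i$ with $\clq_i=\clq_{\vp_i}$ for $i\in\Lambda$ and $\clq_i=H_{z_i}^2(\D)$ for $i\notin\Lambda$, I would write $P_{\clq_{\Phi_\Lambda}}=q\otimes Q$, where $q:=P_{\clq_{\vp_\lambda}}$ on $H_{z_\lambda}^2(\D)$ and $Q:=P_{\clk}$ with $\clk:=\bigotimes_{i\neq\lambda}\clq_i\subseteq H^2_{\widehat\lambda}$, so that $P_{\cls}=I-q\otimes Q$ and $\clk\neq\{0\}$. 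On the $z_\lambda$-factor, $M_{z_\lambda}M^*_{z_\lambda}=(I-p)\otimes I$ and $ev_{z_\lambda}=p\otimes I$, where $p$ is the projection onto the constants. The key elementary observation, which I would record first, is that $E_{z_\lambda}$ is a partial isometry if and only if $E_{z_\lambda}E^*_{z_\lambda}$ is a projection, that by Lemma \ref{ABC} $E_{z_\lambda}E^*_{z_\lambda}=P_{\cls}(p\otimes I)P_{\cls}$, and that $\vp_\lambda(0)=0$ is equivalent to $1\in\clq_{\vp_\lambda}$, i.e.\ to the operator inequality $p\leq q$ (equivalently $pq=qp=p$).

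For the ``if'' direction, assuming $p\leq q$ I would compute $(p\otimes I)P_{\cls}=p\otimes I-pq\otimes Q=p\otimes(I-Q)$, which is already fixed by $P_{\cls}$; hence $E_{z_\lambda}E^*_{z_\lambda}=p\otimes(I-Q)$ is a projection and $E_{z_\lambda}$ is a partial isometry. For the converse I argue by contraposition: if $\vp_\lambda(0)\neq0$, choose $0\neq g\in\clk$ and note $\vp_\lambda\otimes g\in\cls$ (as $\vp_\lambda\perp\clq_{\vp_\lambda}$). Using $p\,\vp_\lambda=\vp_\lambda(0)\cdot 1$ together with $(I-q)1=M_{\vp_\lambda}M^*_{\vp_\lambda}1=\overline{\vp_\lambda(0)}\,\vp_\lambda$, a direct computation gives
\[
E_{z_\lambda}E^*_{z_\lambda}(\vp_\lambda\otimes g)=\vp_\lambda(0)\,P_{\cls}(1\otimes g)=\vp_\lambda(0)\,\big((I-q)1\otimes g\big)=|\vp_\lambda(0)|^2\,(\vp_\lambda\otimes g).
\]
Since $\vp_\lambda$ is a non-constant inner function, $0<|\vp_\lambda(0)|<1$, so $E_{z_\lambda}E^*_{z_\lambda}$ has an eigenvalue in $(0,1)$ and cannot be a projection; thus $E_{z_\lambda}$ is not a partial isometry, proving the ``only if'' direction.

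For the ``Moreover'' part I would use $\vp_\lambda(0)=0$ to factor $\vp_\lambda=z_\lambda\psi_\lambda$ with $\psi_\lambda$ inner, together with the model-space identity $\clq_{\vp_\lambda}=\mathbb{C}\oplus z_\lambda\clq_{\psi_\lambda}$. Setting $q':=q-p$ (the projection onto $z_\lambda\clq_{\psi_\lambda}$) and $r:=I-q$ (the projection onto $\vp_\lambda H_{z_\lambda}^2(\D)$), and using $I-p=q'+r$, I would rewrite
\[
I_{\cls}-E_{z_\lambda}E^*_{z_\lambda}=P_{\cls}-p\otimes(I-Q)=(I-p)\otimes I-q'\otimes Q=q'\otimes(I-Q)+r\otimes I,
\]
a sum of two mutually orthogonal projections (orthogonal because $q'\perp r$). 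Its range is
\[
\big(z_\lambda\clq_{\psi_\lambda}\otimes(I-Q)H^2_{\widehat\lambda}\big)\oplus\big(\vp_\lambda H_{z_\lambda}^2(\D)\otimes H^2_{\widehat\lambda}\big)=z_\lambda\,\cls_j,
\]
where $\cls_j:=\big(\clq_{\psi_\lambda}\otimes(I-Q)H^2_{\widehat\lambda}\big)\oplus\big(\psi_\lambda H_{z_\lambda}^2(\D)\otimes H^2_{\widehat\lambda}\big)$. Finally, by Lemma \ref{projection}, $P_{z_\lambda\cls_j}=M_{z_\lambda}P_{\cls_j}M^*_{z_\lambda}$, so it remains only to check that $\cls_j$ is indeed a submodule.

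The main obstacle is precisely this last verification that $\cls_j$ is shift-invariant, since one summand involves the co-invariant model-space factor $\clq_{\psi_\lambda}$. I would handle it as follows: the piece $\psi_\lambda H_{z_\lambda}^2(\D)\otimes H^2_{\widehat\lambda}=\psi_\lambda(z_\lambda)H^2(\D^n)$ is manifestly a submodule; for the piece $\clq_{\psi_\lambda}\otimes(I-Q)H^2_{\widehat\lambda}$, invariance under $M_{z_i}$ $(i\neq\lambda)$ follows because $\clk$ is $(M^*_{z_i})$-invariant, so its complement $(I-Q)H^2_{\widehat\lambda}$ is $M_{z_i}$-invariant, while invariance under $M_{z_\lambda}$ follows from $M_{z_\lambda}\clq_{\psi_\lambda}\subseteq\clq_{\psi_\lambda}\oplus\psi_\lambda H_{z_\lambda}^2(\D)$, which sends this piece into $\big(\clq_{\psi_\lambda}\otimes(I-Q)H^2_{\widehat\lambda}\big)\oplus\big(\psi_\lambda H_{z_\lambda}^2(\D)\otimes(I-Q)H^2_{\widehat\lambda}\big)\subseteq\cls_j$. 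This confirms that $\cls_j\subseteq H^2(\D^n)$ is a submodule and completes the proof.
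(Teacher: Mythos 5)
Your proof is correct, and it takes a genuinely different route from the paper's. You tensor-factor the quotient projection as $P_{\clq_{\Phi_\Lambda}}=q\otimes Q$ and reduce everything to one-variable computations in the $z_\lambda$-factor, whereas the paper works with the structure formula $P_{\cls_{\Phi_\Lambda}}=M_{\vp_{\lambda_j}}M_{\vp_{\lambda_j}}^*+(I-M_{\vp_{\lambda_j}}M_{\vp_{\lambda_j}}^*)P_{\clm_j}$ and manipulates multiplication-operator identities directly. The sharpest divergence is in the ``only if'' direction: the paper shows that if $I_{\cls}-E_{z_{\lambda_j}}E_{z_{\lambda_j}}^*$ is a projection then $[M_{\vp_{\lambda_j}}M_{\vp_{\lambda_j}}^*,M_{z_{\lambda_j}}M_{z_{\lambda_j}}^*]\,P_{\clm_j^\perp}=0$, kills $P_{\clm_j^\perp}$ by non-constancy and disjointness of variables, and then invokes the external result \cite[Theorem 2.2]{DPS} to conclude $\vp_{\lambda_j}=z_{\lambda_j}\tilde{\vp}_{\lambda_j}$; you instead exhibit the explicit eigenvector $\vp_\lambda\otimes g$ of $E_{z_\lambda}E_{z_\lambda}^*$ with eigenvalue $|\vp_\lambda(0)|^2\in(0,1)$, which rules out a projection immediately. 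This is more elementary and self-contained (no appeal to \cite{DPS}), though note that your choice of $0\neq g\in\clk$ uses the non-constancy of the remaining inner functions in exactly the same way the paper uses $P_{\clm_j^\perp}\neq 0$. For the ``Moreover'' part the two constructions produce the same submodule: your $\cls_j=\big(\clq_{\psi_\lambda}\otimes(I-Q)H^2_{\widehat\lambda}\big)\oplus\psi_\lambda(z_\lambda)H^2(\D^n)$ is precisely the paper's $\cls_{\lambda_j}=\sum_{\lambda'\in\Lambda\setminus\{\lambda\}}\vp_{\lambda'}(z_{\lambda'})H^2(\D^n)+\psi_\lambda(z_\lambda) H^2(\D^n)$ written through the orthogonal decomposition (\ref{structure}); had you observed this identification, shift-invariance would come for free (a sum of Beurling-type submodules), and your final verification paragraph could be omitted. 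As it stands, that hand verification is also correct, so this is a stylistic shortcut rather than a gap.
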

\begin{proof}
Fix any $\lambda_j\in \Lambda = \{\lambda_1,\ldots, \lambda_k\}$, ordered increasingly. Define
\[
\clm_j=\sum_{\lambda \in \Lambda \setminus \{\lambda_j\}}\vp_{\lambda}(z_{\lambda})H^2(\D^n). 
\]
If we re-arrange $\cls_{\Phi_{\Lambda}}$ as the following
	\[
	\cls_{\Phi_{\Lambda}} = \clm_{j} + \vp_{\lambda_j}(\lambda_j) H^2(\D^n),
	\]
	then by the structure in condition (\ref{structure}), we get
	\[
\cls_{\Phi_{\Lambda}}=\vp_{\lambda_j}(z_{\lambda_j})H^2(\D^n)\oplus (I_{H^2(\D^n)} -M_{\vp_{\lambda_j}}M_{\vp_{\lambda_j}}^*)\clm_j. 
\]
So, $P_{\cls_{\Phi_{\Lambda}}}=M_{\vp_{\lambda_j}}M_{\vp_{\lambda_j}}^*+(I_{H^2(\D^n)} - M_{\vp_{\lambda_j}}M_{\vp_{\lambda_j}}^*)P_{\clm_j}$. From the structure of $\clm_j$, it is clear that 
\[
(I_{H^2(\D^n)} - M_{\vp_{\lambda_j}}M_{\vp_{\lambda_j}}^*) P_{\clm_j}  = P_{\clm_j} (I_{H^2(\D^n)} -M_{\vp_{\lambda_j}}M_{\vp_{\lambda_j}}^*).
\]
Suppose $\vp_{\lambda_j}(0)=0$, then there exists an inner function $\tilde{\vp}_{\lambda_j}$ depending only on $z_{\lambda_j}$-variable such that $\vp_{\lambda_j}=z_{\lambda_j}\tilde{\vp}_{\lambda_j}.$ Now, we have
\[
\begin{split}
	M_{\vp_{\lambda_j}}M_{\vp_{\lambda_j}}^*M_{z_{\lambda_j}}M_{z_{\lambda_j}}^*&=M_{z_{\lambda_j}}M_{\tilde{\vp}_{\lambda_j}}M_{z_{\lambda_j}}^*M_{\tilde{\vp}_{\lambda_j}}^*M_{z_{\lambda_j}}M_{z_{\lambda_j}}^*
	\\
	&=M_{z_{\lambda_j}}M_{\tilde{\vp}_{\lambda_j}}M_{z_{\lambda_j}}^*M_{\tilde{\vp}_{\lambda_j}}^*
	\\
	&=M_{\vp_{\lambda_j}}M_{\vp_{\lambda_j}}^*.
\end{split}
\]
So, $M_{\vp_{\lambda_j}}M_{\vp_{\lambda_j}}^*M_{z_{\lambda_j}}M_{z_{\lambda_j}}^* = M_{z_{\lambda_j}}M_{z_{\lambda_j}}^* M_{\vp_{\lambda_j}}M_{\vp_{\lambda_j}}^*= M_{\vp_{\lambda_j}}M_{\vp_{\lambda_j}}^*$. Also,
\[
\begin{split}
	(I_{H^2(\D^n)}  - M_{\vp_{\lambda_j}}M_{\vp_{\lambda_j}}^*) M_{z_{\lambda_j}}M_{z_{\lambda_j}}^* (I_{H^2(\D^n)}  - M_{\vp_{\lambda_j}}M_{\vp_{\lambda_j}}^*)&=M_{z_{\lambda_j}}M_{z_{\lambda_j}}^* - M_{\vp_{\lambda_j}}M_{\vp_{\lambda_j}}^*
	\\
	&=M_{z_{\lambda_j}}M_{z_{\lambda_j}}^* - M_{z_{\lambda_j}}M_{\tilde{\vp}_{\lambda_j}}M_{\tilde{\vp}_{\lambda_j}}^*M_{z_{\lambda_j}}^*
	\\
	&=M_{z_{\lambda_j}}(I_{H^2(\D^n)} -M_{\tilde{\vp}_{\lambda_j}}M_{\tilde{\vp}_{\lambda_j}}^*)M_{z_{\lambda_j}}^*.
\end{split}
\]
Define $\cls_{\lambda_j}: =\sum_{\lambda \in \Lambda \setminus \{\lambda_j\}}\vp_{\lambda}(z_\lambda)H^2(\D^n) + \tilde{\vp}_{\lambda_j}H^2(\D^n)$. Thus, we can write 
\[
\cls_{\lambda_j}= \tilde{\vp}_{\lambda_j}(z_{\lambda_j})H^2(\D^n)\oplus (I_{H^2(\D^n)} -M_{\tilde{\vp}_{\lambda_j}}M_{\tilde{\vp}_{\lambda_j}}^*)\clm_j, 
\]
Hence, $\cls_{\lambda_j}$ is a submodule of $H^2(\D^n)$, and moreover, $z_{\lambda_j}\cls_{\lambda_j}\subseteq \cls_{\Phi_{\Lambda}}$. Now
\[
\begin{split}
	&P_{\cls_{\Phi_{\Lambda}}}M_{z_{\lambda_j}}M_{z_{\lambda_j}}^*P_{\cls_{\Phi_{\Lambda}}}\\
	&=\big(M_{\vp_{\lambda_j}}M_{\vp_{\lambda_j}}^*+(I_{H^2(\D^n)} -M_{\vp_{\lambda_j}}M_{\vp_{\lambda_j}}^* )P_{\clm_j} \big) M_{z_{\lambda_j}}M_{z_{\lambda_j}}^* \big( M_{\vp_{\lambda_j}}M_{\vp_{\lambda_j}}^*+(I_{H^2(\D^n)} -M_{\vp_{\lambda_j}}M_{\vp_{\lambda_j}}^*)P_{\clm_j} \big)
	\\
	&=M_{\vp_{\lambda_j}}M_{\vp_{\lambda_j}}^*+(I_{H^2(\D^n)} -M_{\vp_{\lambda_j}}M_{\vp_{\lambda_j}}^*) P_{\clm_j} M_{z_{\lambda_j}}M_{z_{\lambda_j}}^*(I_{H^2(\D^n)} -M_{\vp_{\lambda_j}}M_{\vp_{\lambda_j}}^*)P_{\clm_j}
	\\
	&=M_{\vp_{\lambda_j}}M_{\vp_{\lambda_j}}^*+ (I_{H^2(\D^n)} -M_{\vp_{\lambda_j}}M_{\vp_{\lambda_j}}^*)  M_{z_{\lambda_j}}M_{z_{\lambda_j}}^*(I_{H^2(\D^n)} -M_{\vp_{\lambda_j}}M_{\vp_{\lambda_j}}^*)P_{\clm_j}
	\\
	&=M_{z_{\lambda_j}} M_{\tilde{\vp}_{\lambda_j}}M_{\tilde{\vp}_{\lambda_j}}^* M_{z_{\lambda_j}}^* + M_{z_{\lambda_j}}(I_{H^2(\D^n)} -M_{\tilde{\vp}_{\lambda_j}}M_{\tilde{\vp}_{\lambda_j}}^*)M_{z_{\lambda_j}}^*P_{\clm_j}
	\\&
	=M_{z_{\lambda_j}} (M_{\tilde{\vp}_{\lambda_j}}M_{\tilde{\vp}_{\lambda_j}}^*+(I_{H^2(\D^n)} -M_{\tilde{\vp}_{\lambda_j}}M_{\tilde{\vp}_{\lambda_j}}^*)P_{\clm_j})M_{z_{\lambda_j}}^*
	\\
	&=M_{z_{\lambda_j}}  P_{\cls_{\lambda_j}} M_{z_{\lambda_j}}^*
	\\
	&=P_{z_{\lambda_j}\cls_{\lambda_j}}.
\end{split}
\]
The last equality follows from Lemma \ref{projection}. Thus, we have
\[
I_{\cls} -E_{z_{\lambda_j}}E_{z_{\lambda_j}}^*=P_{\cls_{\Phi_{\Lambda}}}M_{z_{\lambda_j}}M_{z_{\lambda_j}}^*|_{\cls_{\Phi_{\Lambda}}}=P_{z_{\lambda_j}\cls_{\lambda_j}} 
\]
is an orthogonal projection. Conversely, suppose $I_{\cls} -E_{z_{\lambda_j}}E_{z_{\lambda_j}}^*$ is a projection, then
\[
\begin{split}
P_{\cls_{\Phi_{\Lambda}}}M_{z_{\lambda_j}}M_{z_{\lambda_j}}^*P_{\cls_{\Phi_{\Lambda}}}&=P_{\cls_{\Phi_{\Lambda}}}M_{z_{\lambda_j}}M_{z_{\lambda_j}}^*P_{\cls_{\Phi_{\Lambda}}}M_{z_{\lambda_j}}M_{z_{\lambda_j}}^*P_{\cls_{\Phi_{\Lambda}}}
\\
&=P_{\cls_{\Phi_{\Lambda}}}M_{z_{\lambda_j}}M_{z_{\lambda_j}}^* (I_{H^2(\D^n)} -P_{\cls_{\Phi_{\Lambda}}}^{\perp})M_{z_{\lambda_j}}M_{z_{\lambda_j}}^*P_{\cls_{\Phi_{\Lambda}}}
\\
&=P_{\cls_{\Phi_{\Lambda}}}M_{z_{\lambda_j}}M_{z_{\lambda_j}}^*P_{\cls_{\Phi_{\Lambda}}}-P_{\cls_{\Phi_{\Lambda}}}M_{z_{\lambda_j}}M_{z_{\lambda_j}}^*P_{\cls_{\Phi_{\Lambda}}}^{\perp} M_{z_{\lambda_j}}M_{z_{\lambda_j}}^*P_{\cls_{\Phi_{\Lambda}}}.
\end{split}
\]
Thus, we have $P_{\cls_{\Phi_{\Lambda}}}M_{z_{\lambda_j}}M_{z_{\lambda_j}}^*P_{\cls_{\Phi_{\Lambda}}}^{\perp} M_{z_{\lambda_j}}M_{z_{\lambda_j}}^*P_{\cls_{\Phi_{\Lambda}}}.=0$. Equivalently, $P_{\cls_{\Phi_{\Lambda}}}M_{z_{\lambda_j}}M_{z_{\lambda_j}}^*P_{\cls_{\Phi_{\Lambda}}}^{\perp}=0$. So,
\[
P_{\cls_{\Phi_{\Lambda}}}M_{z_{\lambda_j}}M_{z_{\lambda_j}}^*=P_{\cls_{\Phi_{\Lambda}}}M_{z_{\lambda_j}}M_{z_{\lambda_j}}^*P_{\cls_{\Phi_{\Lambda}}}=M_{z_{\lambda_j}}M_{z_{\lambda_j}}^* P_{\cls_{\Phi_{\Lambda}}}.
\]
Then
\begin{multline}
(M_{{\vp}_{\lambda_j}}M_{{\vp}_{\lambda_j}}^* +(I_{H^2(\D^n)} -M_{{\vp}_{\lambda_j}}M_{{\vp}_{\lambda_j}}^* )P_{\clm_j})M_{z_{\lambda_j}}M_{z_{\lambda_j}}^* \\
=M_{z_{\lambda_j}}M_{z_{\lambda_j}}^*(M_{{\vp}_{\lambda_j}}M_{{\vp}_{\lambda_j}}^* +(I_{H^2(\D^n)} -M_{{\vp}_{\lambda_j}}M_{{\vp}_{\lambda_j}}^* )P_{\clm_j}).
\end{multline}
This implies,
\[
\begin{split}
0&=[M_{{\vp}_{\lambda_j}}M_{{\vp}_{\lambda_j}}^* , M_{z_{\lambda_j}}M_{z_{\lambda_j}}^*]+[(I_{H^2(\D^n)} -M_{{\vp}_{\lambda_j}}M_{{\vp}_{\lambda_j}}^* ), M_{z_{\lambda_j}}M_{z_{\lambda_j}}^*]P_{\clm_j}
\\
&=[M_{{\vp}_{\lambda_j}}M_{{\vp}_{\lambda_j}}^*  M_{z_{\lambda_j}}M_{z_{\lambda_j}}^*]-[M_{{\vp}_{\lambda_j}}M_{{\vp}_{\lambda_j}}^*, M_{z_{\lambda_j}}M_{z_{\lambda_j}}^*]P_{\clm_j}
\\
&=[M_{{\vp}_{\lambda_j}}M_{{\vp}_{\lambda_j}}^* M_{z_{\lambda_j}}M_{z_{\lambda_j}}^*]P_{\clm_j^{\perp}}.
\end{split}
\]
Thus $[M_{{\vp}_{\lambda_j}}M_{{\vp}_{\lambda_j}}^* , M_{z_{\lambda_j}}M_{z_{\lambda_j}}^*]=0$ or $P_{\clm_j^{\perp}}=0$.
Note that $\clm_j^{\perp} = \underset{\lambda \in \Lambda \setminus \{\lambda_j\}}{\otimes} (H^2(\D)\ominus \vp_{\lambda} H^2(\D))$. Since all $\vp_{\lambda_i}$ are non-constant inner functions, we have $P_{\clm_j^{\perp}}\neq 0$. 

Consequently, $[M_{{\vp}_{\lambda_j}}M_{{\vp}_{\lambda_j}}^*, M_{z_{\lambda_j}}M_{z_{\lambda_j}}^*]=0$. Using \cite[Theorem 2.2]{DPS}, we get that there must exist an inner function $ \tilde{\vp}_j $
depending only on $z_{\lambda_j}$-variable such that $\vp_{\lambda_j}=z_{\lambda_j} \tilde{\vp}_{\lambda_j}$. In this case, we have $I_{\cls_{\Phi_{\Lambda}}}-E_{z_{\lambda_j}}E_{z_{\lambda_j}}^*=P_{z_{\lambda_j}\cls_{\lambda_j}}$. Where
\[
\cls_{\lambda_j}= \sum_{\lambda \in \Lambda; \lambda \neq \lambda_j} \vp_{\lambda}(z_{\lambda})H^2(\D^n) + \tilde{\vp}_{\lambda_j} H^2(\D^n).
\]
Clearly, $\cls_j$ is a submodule of $ H^2(\D^n)$, amd moreover,  $z_{\lambda_j} \cls_{\lambda_j} \subseteq \cls_{\vp_\Lambda}$.  This completes the proof.
\end{proof}

\begin{cor}
Let $\cls_{\Phi_{\Lambda}}= \sum_{\lambda \in \Lambda}\vp_\lambda(z_\lambda)H^2(\D^n)$ be a submodule of $H^2(\D^n)$, where $\vp_\lambda(z_\lambda)$ are non-constant inner functions. Then $E_{z_{\lambda}}$ is a partial isometry for all $\lambda \in \Lambda$, if and only if $1\in \clq_{\Phi_{\Lambda}}$.
\end{cor}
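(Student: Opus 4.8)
The plan is to combine Theorem \ref{partial} with a short computation identifying exactly when the constant function lies in the quotient module. By Theorem \ref{partial}, for each $\lambda \in \Lambda$ the evaluation operator $E_{z_{\lambda}}$ is a partial isometry if and only if $\vp_{\lambda}(0) = 0$. Consequently $E_{z_{\lambda}}$ is a partial isometry for every $\lambda \in \Lambda$ if and only if $\vp_{\lambda}(0) = 0$ for all $\lambda \in \Lambda$, so it only remains to verify that this condition is equivalent to $1 \in \clq_{\Phi_{\Lambda}}$.

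For this remaining equivalence I would argue directly from $\clq_{\Phi_{\Lambda}} = \cls_{\Phi_{\Lambda}}^{\perp}$. Since $\cls_{\Phi_{\Lambda}} = \sum_{\lambda \in \Lambda} \vp_{\lambda}(z_{\lambda}) H^2(\D^n)$, the orthogonal complement of this closed sum is the intersection of the orthogonal complements of the generating subspaces; hence $1 \in \clq_{\Phi_{\Lambda}}$ precisely when $1 \perp \vp_{\lambda}(z_{\lambda}) H^2(\D^n)$ for every $\lambda \in \Lambda$. For fixed $\lambda$ and arbitrary $f \in H^2(\D^n)$, the inner product $\la \vp_{\lambda} f, 1 \ra$ is the constant Fourier coefficient of $\vp_{\lambda} f$, i.e.\ its value at the origin; as $\vp_{\lambda}$ depends only on $z_{\lambda}$, this value is $\vp_{\lambda}(0) f(0)$. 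Therefore $1 \perp \vp_{\lambda}(z_{\lambda}) H^2(\D^n)$ if and only if $\vp_{\lambda}(0) f(0) = 0$ for all $f$, which holds exactly when $\vp_{\lambda}(0) = 0$.

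Chaining these observations, $1 \in \clq_{\Phi_{\Lambda}}$ if and only if $\vp_{\lambda}(0) = 0$ for all $\lambda \in \Lambda$, and by Theorem \ref{partial} this is equivalent to every $E_{z_{\lambda}}$, $\lambda \in \Lambda$, being a partial isometry, which completes the proof. There is no genuine obstacle here beyond the bookkeeping: the only points requiring care are reducing orthogonality to the closed sum $\cls_{\Phi_{\Lambda}}$ to orthogonality to each summand $\vp_{\lambda}(z_{\lambda}) H^2(\D^n)$, and the elementary evaluation of $\la \vp_{\lambda} f, 1 \ra$ at the origin. Alternatively one could invoke the tensor decomposition $\clq_{\Phi_{\Lambda}} = \underset{\lambda \in \Lambda}{\otimes} \clq_{\vp_{\lambda}}$ (with full Hardy-space factors in the remaining variables) together with the one-variable fact that $1 \in \clq_{\vp_{\lambda}}$ iff $\vp_{\lambda}(0) = 0$, but the direct computation above is the most economical route.
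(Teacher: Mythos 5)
Your proof is correct and takes essentially the same route as the paper: both directions hinge on Theorem \ref{partial}, reducing the partial-isometry condition to $\vp_\lambda(0)=0$ for all $\lambda \in \Lambda$. The only minor difference is how the elementary equivalence $1\in\clq_{\Phi_{\Lambda}} \Leftrightarrow \vp_\lambda(0)=0$ for all $\lambda$ is verified: the paper reads it off from the tensor decomposition $\clq_{\Phi_{\Lambda}}=\underset{\lambda\in\Lambda}{\otimes}\clq_{\vp_\lambda}$ (the alternative you mention at the end), whereas you obtain it by the direct computation $\la \vp_\lambda f, 1\ra = \vp_\lambda(0)f(0)$ together with the fact that the orthogonal complement of the sum is the intersection of the complements of the summands.
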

\begin{proof}
	Suppose that $1\in \cls_{\Phi_{\Lambda}}^{\perp}$. Since
	\[
	\cls_{\Phi_{\Lambda}}^{\perp}=\otimes_{\lambda \in \Lambda} (H^2(\D)\ominus \vp_\lambda(z_\lambda)H^2(\D)).
	\]
	So, $1\in H^2(\D)\ominus \vp_\lambda(z_\lambda)H^2(\D)$ for all $\lambda \in \Lambda$. Thus for each $\lambda \in \Lambda$ there exists an inner function $\tilde{\vp}_\lambda(z_\lambda)$ depending only on the $z_\lambda$-variable such that
	\[
	\vp_\lambda=z_\lambda \tilde{\vp}_\lambda.
	\] 
	Now by the previous Theorem \ref{partial}, we conclude that $E_{z_\lambda}$ is a partial isometry for all $\lambda \in \Lambda$. On the other hand, if $E_{z_\lambda}$ is a partial isometry for all $\lambda \in \Lambda$, then by previous Theorem $\vp_\lambda(0)=0$. Therefore $1\in H^2(\D)\ominus \vp_\lambda(z_\lambda)H^2(\D)$ for all $\lambda \in \Lambda$. Consequently, $1\in \cls_{\Phi_{\Lambda}}^{\perp}$. This completes the proof.
\end{proof}

\newsection{Commuting defects of restriction operators}\label{defects}

Let $\cls$ be a submodule of $H^2(\D^n)$, then the collection of the corresponding restrictions $(R_{z_1},\ldots,R_{z_n})$ is a tuple of isometries.  Thus, we have a natural collection of orthogonal projections associated with $\cls$, namely, $$(I_{\cls} - R_{z_1}R_{z_1}^*, \ldots, I_{\cls} - R_{z_n}R_{z_n}^*).$$
It is a natural question to ask when these projections commute with each other. Whenever we have this property, we can write 
\[
\cls \ominus \sum_{i=1}^n z_i \cls = \mbox{ran }\underset{i=1} {\overset{n}{\Pi}}(I_{\cls} - R_{z_i}R_{z_i}^*).
\]
Since the \textit{wandering} subspace (the subspace on the left-hand side) contains a significant amount of information, this identity can serve as an important tool in the analysis of submodules. Due to the important advantage that it carries, this problem is substantially difficult for general submodules. In this section, motivated by our earlier results, we study this question for the following class of submodules of $H^2(\D^n)$ 
\[
\cls_{\Phi_{\Lambda}} = \underset{\lambda \in \Lambda}{\sum} \vp_\lambda(z_\lambda) H^2(\D^n).
\]

Let $\Lambda = \{\lambda_1, \ldots, \lambda_k\} \subseteq \{1,\ldots,n\}$. Before going into the proof in detail, let us highlight a re-structuring of the submodule $\cls_{\Phi_{\Lambda}}$ that will be useful in the sequel.  Let $n\geq 2$, and let us fix distinct $\lambda_i,\lambda_j\in \Lambda$. Define
	\[
	\clm_{i,j}:=\sum_{\lambda \in \Lambda; \lambda \neq \lambda_i,\lambda_j}\vp_\lambda(z_\lambda)H^2(\D^n). 
	\] 
	It is straightforward to observe that 
	\[
	\clm_{i,j}^{\perp} = \underset{\lambda \in \Lambda; \lambda \neq \lambda_i,\lambda_j}{\otimes} \clq_{\vp_\lambda}.
	\]
	If we re-arrange $\cls_{\Phi_{\Lambda}}$ as the following
	\[
	\cls_{\Phi_{\Lambda}} = \clm_{i,j} + \vp_{\lambda_j}(z_{\lambda_j}) H^2(\D^n) + \vp_{\lambda_i}(z_{\lambda_i}) H^2(\D^n),
	\]
	then by the structure in condition (\ref{structure}), we get
	\[
	\begin{split}
	\cls_{\Phi_{\Lambda}}&=\vp_{\lambda_i}(z_{\lambda_i})H^2(\D^n)\oplus (I_{\cls_{\Phi_{\Lambda}}}-M_{\vp_{\lambda_i}}M_{\vp_{\lambda_i}}^*)\vp_{\lambda_j}(z_{\lambda_j})H^2(\D^n)\\
	&\oplus (I_{\cls_{\Phi_{\Lambda}}}-M_{\vp_{\lambda_i}}M_{\vp_{\lambda_i}}^*)(I_{\cls_{\Phi_{\Lambda}}}-M_{\vp_{\lambda_j}}M_{\vp_{\lambda_j}}^*)\clm_{i,j}.
	\end{split}
	\]
	and in terms of projections we get,
	\begin{equation}\label{projn2}
	\begin{split}
	P_{\cls_{\Phi_{\Lambda}}}&=M_{\vp_{\lambda_i}}M_{\vp_{\lambda_i}}^*\oplus (I_{\cls_{\Phi_{\Lambda}}}-M_{\vp_{\lambda_i}}M_{\vp_{\lambda_i}}^*)M_{\vp_{\lambda_j}}M_{\vp_{\lambda_j}}^*\\ 
	&\oplus (I_{\cls_{\Phi_{\Lambda}}}-M_{\vp_{\lambda_i}}M_{\vp_{\lambda_i}}^*)(I_{\cls{\vp_{\Lambda}}}-M_{\vp_{\lambda_j}}M_{\vp_{\lambda_j}}^*)P_{\clm_{i,j}}.
	\end{split}
	\end{equation}
Note that, for any $\lambda \in \Lambda$, we can again write
\[
P_{\cls_{\Phi_{\Lambda}}} = M_{\vp_\lambda} M_{\vp_\lambda}^* \oplus (I_{H^2(\D^n)} - M_{\vp_\lambda} M_{\vp_\lambda}^*) \sum_{t \in \Lambda \setminus \{\lambda\}} M_{\vp_{t}}M_{\vp_{t}}^*.
\]
Using this identity we can conclude that for any $\lambda \in \Lambda$,
	\begin{equation}\label{adj_eqn}
	M_{\vp_{\lambda}}^*P_{\cls_{\Phi_{\Lambda}}}=M_{\vp_{\lambda}}^*\quad \text{and hence, }\quad 	P_{\cls_{\Phi_{\Lambda}}}M_{\vp_{\lambda}}=M_{\vp_{\lambda}}.
  \end{equation}

\begin{proof}[Proof of Theorem \ref{commuting defects}]
Let us now begin the proof by assuming that 
\[
(I_{\cls_{\Phi_{\Lambda}}} - R_{z_{\lambda_1}}R_{z_{\lambda_1}}^*, \ldots, I_{\cls_{\Phi_{\Lambda}}} - R_{z_{\lambda_n}}R_{z_{\lambda_k}}^*),
\]
forms a commuting tuple. By expanding the  individual defect operators, we see that this condition turns into
\begin{equation}\label{comm}
\big(M_{z_{\lambda_i}} P_{\cls_{\Phi_{\Lambda}}} M_{z_{\lambda_i}}^* \big) \big( M_{z_{\lambda_j}} P_{\cls_{\Phi_{\Lambda}}} M_{z_{\lambda_j}}^* \big) = \big( M_{z_{\lambda_j}} P_{\cls_{\Phi_{\Lambda}}} M_{z_{\lambda_j}}^* \big) \big(M_{z_{\lambda_i}} P_{\cls_{\Phi_{\Lambda}}} M_{z_{\lambda_i}}^* \big)
\end{equation}
for all distinct $\lambda_i, \lambda_j \in \Lambda$. 
	This implies that
	\[
\begin{split}
M_{\vp_{\lambda_i}}^*(M_{z_{\lambda_i}}P_{\cls_{\Phi_{\Lambda}}}M_{z_{\lambda_i}}^*)(M_{z_{\lambda_j}}P_{\cls_{\Phi_{\Lambda}}}M_{z_{{\lambda_j}}}^*)	M_{\vp_{\lambda_i}}
&=M_{\vp_{\lambda_i}}^*(M_{z_{\lambda_i}}P_{\cls_{\Phi_{\Lambda}}}M_{z_{\lambda_i}}^*)(M_{z_{\lambda_j}}P_{\cls_{\Phi_{\Lambda}}} M_{\vp_{\lambda_i}})M_{z_{{\lambda_j}}}^*
\\
&=M_{\vp_{\lambda_i}}^*(M_{z_{\lambda_i}}P_{\cls_{\Phi_{\Lambda}}}M_{z_{\lambda_i}}^*)M_{\vp_{\lambda_i}}(M_{z_{{\lambda_j}}} M_{z_{{\lambda_j}}}^*).
\end{split}
	\]
	Also, by using the commutativity in condition (\ref{comm}), we get
	\begin{align*}
	M_{\vp_{\lambda_i}}^*(M_{z_{\lambda_i}}P_{\cls_{\Phi_{\Lambda}}}M_{z_{\lambda_i}}^*)(M_{z_{\lambda_j}}P_{\cls_{\Phi_{\Lambda}}}M_{z_{{\lambda_j}}}^*)	M_{\vp_{\lambda_i}} 
	&= M_{\vp_{\lambda_i}}^* (M_{z_{\lambda_j}}P_{\cls_{\Phi_{\Lambda}}}M_{z_{{\lambda_j}}}^*)	 (M_{z_{\lambda_i}}P_{\cls_{\Phi_{\Lambda}}}M_{z_{\lambda_i}}^*)M_{\vp_{\lambda_i}} \\
	&= (M_{z_{{\lambda_j}}} M_{z_{{\lambda_j}}}^*) M_{\vp_{\lambda_i}}^*(M_{z_{\lambda_i}}P_{\cls_{\Phi_{\Lambda}}}M_{z_{\lambda_i}}^*)M_{\vp_{\lambda_i}}.
	\end{align*}
	Now
	\[
	\begin{split}
&M_{\vp_{\lambda_i}}^*(M_{z_{\lambda_i}}P_{\cls_{\Phi_{\Lambda}}}M_{z_{\lambda_i}}^*)M_{\vp_{\lambda_i}}\\
&=M_{\vp_{\lambda_i}}^*M_{z_{\lambda_i}}[M_{\vp_{\lambda_i}}M_{\vp_{\lambda_i}}^*+ (I_{H^2(\D^n)} -M_{\vp_{\lambda_i}}M_{\vp_{\lambda_i}}^*)M_{\vp_{\lambda_j}}M_{\vp_{\lambda_j}}^*
\\
& +(I_{H^2(\D^n)}-M_{\vp_{\lambda_i}}M_{\vp_{\lambda_i}}^*)(I_{H^2(\D^n)} -M_{\vp_{\lambda_j}}M_{\vp_{\lambda_j}}^*)P_{\clm_{i,j}}]M_{z_{\lambda_i}}^*M_{\vp_{\lambda_i}}
\\
&
=M_{z_{\lambda_i}}M_{z_{\lambda_i}}^*+M_{\vp_{\lambda_i}}^*M_{z_{\lambda_i}}(I_{H^2(\D^n)}-M_{\vp_{\lambda_i}}M_{\vp_{\lambda_i}}^*)M_{z_{\lambda_i}}^*M_{\vp_{\lambda_i}}M_{\vp_{\lambda_j}}M_{\vp_{\lambda_j}}^*
\\
&+M_{\vp_{\lambda_i}}^*M_{z_{\lambda_i}}(I_{H^2(\D^n)}-M_{\vp_{\lambda_i}}M_{\vp_{\lambda_i}}^*)M_{z_{\lambda_i}}^*M_{\vp_{\lambda_i}}
(I_{H^2(\D^n)} - M_{\vp_{\lambda_j}}M_{\vp_{\lambda_j}}^*)P_{\clm_{i,j}}.
	\end{split}
	\]
	So, we have
	\[
	\begin{split}
&M_{\vp_{\lambda_i}}^*(M_{z_{\lambda_i}}P_{\cls_{\Phi_{\Lambda}}}M_{z_{\lambda_i}}^*)M_{\vp_{\lambda_i}}(M_{z_{\lambda_j}}M_{z_{\lambda_j}}^*)\\
&=(M_{z_{\lambda_i}}M_{z_{\lambda_i}}^*)(M_{z_{\lambda_j}}M_{z_{\lambda_j}}^*)
\\
&+M_{\vp_{\lambda_i}}^*M_{z_{\lambda_i}}(I_{H^2(\D^n)}-M_{\vp_{\lambda_i}}M_{\vp_{\lambda_i}}^*)M_{z_{\lambda_i}}^*M_{\vp_{\lambda_i}}M_{\vp_{\lambda_j}}M_{\vp_{\lambda_j}}^*(M_{z_{\lambda_j}}M_{z_{\lambda_j}}^*)
\\
&+M_{\vp_{\lambda_i}}^*M_{z_{\lambda_i}}(I_{H^2(\D^n)}-M_{\vp_{\lambda_i}}M_{\vp_{\lambda_i}}^*)M_{z_{\lambda_i}}^*M_{\vp_{\lambda_i}}
(I_{H^2(\D^n)}-M_{\vp_{\lambda_j}}M_{\vp_{\lambda_j}}^*)(M_{z_{\lambda_j}}M_{z_{\lambda_j}}^*)P_{\clm_{i,j}}
	\end{split}
	\]
	Now, let us consider the following difference.
\[	\begin{split}
&M_{\vp_{\lambda_i}}^*(M_{z_{\lambda_i}}P_{\cls_{\Phi_{\Lambda}}}M_{z_{\lambda_i}}^*)(M_{z_{\lambda_j}}P_{\cls_{\Phi_{\Lambda}}}M_{z_{\lambda_j}}^*)	M_{\vp_{\lambda_i}} - M_{\vp_{\lambda_i}}^*(M_{z_{\lambda_j}}P_{\cls_{\Phi_{\Lambda}}}M_{z_{\lambda_j}}^*) (M_{z_{\lambda_i}}P_{\cls_{\Phi_{\Lambda}}}M_{z_{\lambda_i}}^*)M_{\vp_{\lambda_i}}
\\
&=M_{\vp_{\lambda_i}}^*(M_{z_{\lambda_i}}P_{\cls_{\Phi_{\Lambda}}}M_{z_{\lambda_i}}^*)M_{\vp_{\lambda_i}}(M_{z_{v_j}}M_{z_{\lambda_j}}^*)-
(M_{z_{\lambda_j}}M_{z_{\lambda_j}}^*)M_{\vp_{\lambda_i}}^*(M_{z_{\lambda_i}}P_{\cls_{\Phi_{\Lambda}}}M_{z_{\lambda_i}}^*)M_{\vp_{\lambda_i}}
\\
&=M_{\vp_{\lambda_i}}^*M_{z_{\lambda_i}}(I_{H^2(\D^n)}-M_{\vp_{\lambda_i}}M_{\vp_{\lambda_i}}^*)M_{z_{\lambda_i}}^*M_{\vp_{\lambda_i}}[M_{\vp_{\lambda_j}}M_{\vp_{\lambda_j}}^*,M_{z_{\lambda_j}}M_{z_{\lambda_j}}^*]
\\
&+M_{\vp_{\lambda_i}}^*M_{z_{\lambda_i}}(I-M_{\vp_{\lambda_i}}M_{\vp_{\lambda_i}}^*)M_{z_{\lambda_i}}^*M_{\vp_{\lambda_i}}
[(I-M_{\vp_{\lambda_j}}M_{\vp_{\lambda_j}}^*), M_{z_{\lambda_j}}M_{z_{\lambda_j}}^*]P_{\clm_{i,j}}
\\
&=M_{\vp_{\lambda_i}}^*M_{z_{\lambda_i}}(I_{H^2(\D^n)}-M_{\vp_{\lambda_i}}M_{\vp_{\lambda_i}}^*)M_{z_{\lambda_i}}^*M_{\vp_{\lambda_i}}[M_{\vp_{\lambda_j}}M_{\vp_{\lambda_j}}^*,M_{z_{\lambda_j}}M_{z_{\lambda_j}}^*]
\\
&-M_{\vp_{\lambda_i}}^*M_{z_{\lambda_i}}(I_{H^2(\D^n)}-M_{\vp_{\lambda_i}}M_{\vp_{\lambda_i}}^*)M_{z_{\lambda_i}}^*M_{\vp_{\lambda_i}}
[M_{\vp_{\lambda_j}}M_{\vp_{\lambda_j}}^*, M_{z_{\lambda_i}}M_{z_{\lambda_i}}^*]P_{\clm_{i,j}}
\\
&=M_{\vp_{\lambda_i}}^*M_{z_{\lambda_i}}(I_{H^2(\D^n)}-M_{\vp_{\lambda_i}}M_{\vp_{\lambda_i}}^*)M_{z_{\lambda_i}}^*M_{\vp_{\lambda_i}}
[M_{\vp_{\lambda_j}}M_{\vp_{\lambda_j}}^*, M_{z_{\lambda_i}}M_{z_{\lambda_i}}^*](I_{H^2(\D^n)}-P_{\clm_{i,j}})
\\
&=M_{\vp_{\lambda_i}}^*M_{z_{\lambda_i}}(I_{H^2(\D^n)}-M_{\vp_{\lambda_i}}M_{\vp_{\lambda_i}}^*)M_{z_{\lambda_i}}^*M_{\vp_{\lambda_i}}
[M_{\vp_{\lambda_j}}M_{\vp_{\lambda_j}}^*, M_{z_{\lambda_j}}M_{z_{\lambda_j}}^*]P_{\clm_{i,j}^{\perp}}.
	\end{split}
\]
From our assumption the above difference is zero, and hence, 
\[
\big( M_{\vp_{\lambda_i}}^*M_{z_{\lambda_i}}(I_{H^2(\D^n)}-M_{\vp_{\lambda_i}}M_{\vp_{\lambda_i}}^*)M_{z_{\lambda_i}}^*M_{\vp_{\lambda_i}} \big)
\big([M_{\vp_{\lambda_j}}M_{\vp_{\lambda_j}}^*, M_{z_{\lambda_j}}M_{z_{\lambda_j}}^*] \big) \big( P_{\clm_{i,j}^{\perp}} \big) = 0.
\]
Since the above terms inside different parentheses depend on disjoint set of variables, we can deduce that either $$M_{\vp_{\lambda_i}}^*M_{z_{\lambda_i}}(I_{H^2(\D^n)}-M_{\vp_{\lambda_i}}M_{\vp_{\lambda_i}}^*)M_{z_{\lambda_i}}^*M_{\vp_{\lambda_i}}=0,$$ or $[M_{\vp_{\lambda_j}}M_{\vp_{\lambda_j}}^*, M_{z_{\lambda_j}}M_{z_{\lambda_j}}^*]=0$, or $P_{\clm_{i,j}^{\perp}}=0$.
	Since $\vp_{\lambda}$'s are non-constant for all $\lambda \in \Lambda$, we get $P_{\clm_{i,j}^{\perp}}\neq0$.  Now if $M_{\vp_{\lambda_i}}^*M_{z_{\lambda_i}}(I_{H^2(\D^n)}-M_{\vp_{\lambda_i}}M_{\vp_{\lambda_i}}^*)M_{z_{\lambda_i}}^*M_{\vp_{\lambda_i}}=0$, then $M_{\vp_{\lambda_i}}^*M_{z_{\lambda_i}}M_{z_{\lambda_i}}^*M_{\vp_{\lambda_i}} = M_{z_{\lambda_i}}M_{z_{\lambda_i}}^*$ is an orthogonal projection and hence, $M_{\vp_{\lambda_i}}^*M_{z_{\lambda_i}}$ must be a partial isometry. This implies that
	\[
	M_{\vp_{\lambda_i}}^*M_{z_{\lambda_i}} M_{z_{\lambda_i}}^* M_{\vp_{\lambda_i}} M_{\vp_{\lambda_i}}^*M_{z_{\lambda_i}} = M_{\vp_{\lambda_i}}^*M_{z_{\lambda_i}}.
	\]
Using $I_{H^2(\D^n)} - M_{z_{\lambda_i}}M_{z_{\lambda_i}}^* = P_{\ker M_{z_{\lambda_i}}^*}$, we get
\[
M_{\vp_{\lambda_i}}^* M_{\vp_{\lambda_i}} M_{\vp_{\lambda_i}}^*M_{z_{\lambda_i}} - M_{\vp_{\lambda_i}}^* P_{\ker M_{z_{\lambda_i}}^*} M_{\vp_{\lambda_i}} M_{\vp_{\lambda_i}}^*M_{z_{\lambda_i}} = M_{\vp_{\lambda_i}}^*M_{z_{\lambda_i}},
\]
and hence,
\[
M_{\vp_{\lambda_i}}^* P_{\ker M_{z_{\lambda_i}}^*} M_{\vp_{\lambda_i}} M_{\vp_{\lambda_i}}^*M_{z_{\lambda_i}} = 0.
\]
This will again imply that $ P_{\ker M_{z_{\lambda_i}}^*} M_{\vp_{\lambda_i}} M_{\vp_{\lambda_i}}^*M_{z_{\lambda_i}} = 0$. Thus, we get
\[
M_{\vp_{\lambda_i}} M_{\vp_{\lambda_i}}^*M_{z_{\lambda_i}} = M_{z_{\lambda_i}} M_{z_{\lambda_i}}^*M_{\vp_{\lambda_i}} M_{\vp_{\lambda_i}}^*M_{z_{\lambda_i}}.
\]
By acting on the right of both sides by $M_{z_{\lambda_i}}^*$, we get that
\[
[M_{\vp_{\lambda_i}} M_{\vp_{\lambda_i}}^*, M_{z_{\lambda_i}}M_{z_{\lambda_i}}^*]=0.
\]
Using \cite[Theorem 2.2]{DPS} we deduce that there must exist an inner function $\tilde{\vp}_{\lambda_i}(z_{\lambda_i})$ such that $\vp_{\lambda_i}(z_{\lambda_i})=z_{\lambda_i}\tilde{\vp}_{\lambda_i}(z_{\lambda_i})$. Then
	\[
	M_{z_{\lambda_i}}M_{z_{\lambda_i}}^*=M_{\vp_{\lambda_i}}^*M_{z_{\lambda_i}}M_{z_{\lambda_i}}^*M_{\vp_{\lambda_i}}  =M_{\tilde{\vp}_{\lambda_i}}^* M_{z_{\lambda_i}}^* M_{z_{\lambda_i}}M_{z_{\lambda_i}}^* M_{z_{\lambda_i}} M_{\tilde{\vp}_{\lambda_i}} =I_{H^2(\D^n)},
	\]
which is a contradiction. Thus, the only possibility that remains is
	 \[
	 [M_{\vp_{\lambda_j}}M_{\vp_{\lambda_j}}^*, M_{z_{\lambda_j}}M_{z_{\lambda_j}}^*]=0.
	 \]
	 So, there must exists an an inner function $\tilde{\vp}_{\lambda_j}(z_{\lambda_j})$ such that $\vp_{\lambda_j}(z_{\lambda_j})=z_j\tilde{\vp}_{\lambda_j}(z_{\lambda_j})$, and hence, $\vp_{\lambda_j}(0)=0$. Next, we consider the difference
	 \[
	 0=M_{\vp_{\lambda_j}}^*(M_{z_{\lambda_i}}P_{\cls_{\Phi_{\Lambda}}}M_{z_{\lambda_i}}^*)(M_{z_{\lambda_j}}P_{\cls_{\Phi_{\Lambda}}}M_{z_{\lambda_j}}^*)	M_{\vp_{\lambda_j}}-M_{\vp_{\lambda_j}}^* (M_{z_{\lambda_j}}P_{\cls_{\Phi_{\Lambda}}}M_{z_{\lambda_j}}^*) (M_{z_{\lambda_i}}P_{\cls_{\Phi_{\Lambda}}}M_{z_{\lambda_i}}^*)M_{\vp_{\lambda_j}},
	 \]
and in a similar manner, we can conclude that $\vp_{\lambda_i}(0)=0$. Thus, $\vp_{\lambda_i}(0)=0$ and $\vp_{\lambda_j}(0)=0$. 
	 
Let us now look at the converse direction. Assume $\vp_{\lambda_i}(z_{\lambda_i})=z_{\lambda_i}\tilde{\vp}_{\lambda_i}(z_{\lambda_i})$ and $\vp_{\lambda_j}(z_{\lambda_j})=z_{\lambda_j}\tilde{\vp}_{\lambda_j}(z_{\lambda_j})$, then using the structure of projections in identity (\ref{projn2}), we get
	 \begin{align*}
	 P_{\cls_{\Phi_{\Lambda}}}&=M_{z_{\lambda_i} \tilde{\vp}_{\lambda_i}}M_{z_{\lambda_i}\tilde{\vp}_{\lambda_i}}^*\oplus (I_{\cls_{\Phi_{\Lambda}}}-M_{z_{\lambda_i}\tilde{\vp}_{\lambda_i}}M_{z_{\lambda_i} \tilde{\vp}_{\lambda_i}}^*)M_{z_{\lambda_j} \tilde{\vp}_{\lambda_j}}M_{z_{\lambda_j}\tilde{\vp}_{\lambda_j}}^*\\
&\oplus (I_{\cls_{\Phi_{\Lambda}}}-M_{z_{\lambda_i}\tilde{\vp}_{\lambda_i}}M_{z_{\lambda_i}\tilde{\vp}_{\lambda_i}}^*)(I_{\cls_{\Phi_{\Lambda}}}-M_{z_{\lambda_j}\tilde{\vp}_{\lambda_j}}M_{z_{\lambda_j}\tilde{\vp}_{\lambda_j}}^*)P_{\clm_{i,j}}.
   \end{align*}
	 So,
	 \[
\begin{split}
M_{z_{\lambda_i}}P_{\cls_{\Phi_{\Lambda}}}M_{z_{\lambda_i}}^* &= M_{z_{\lambda_i}^2 \tilde{\vp}_{\lambda_i}}M_{z_{\lambda_i}^2\tilde{\vp}_{\lambda_i}}^* \\
&\oplus
M_{z_{\lambda_i}}(I_{\cls_{\Phi_{\Lambda}}}-M_{z_{\lambda_i}\tilde{\vp}_{\lambda_i}}M_{z_{\lambda_i}\tilde{\vp}_{\lambda_i}}^*)M_{z_{\lambda_i}}^* M_{z_{\lambda_j}\tilde{\vp}_{\lambda_j}}M_{z_{\lambda_j}\tilde{\vp}_{\lambda_j}}^*
\\
&\oplus 
M_{z_{\lambda_i}}(I_{\cls_{\Phi_{\Lambda}}}-M_{z_{\lambda_i}\tilde{\vp}_{\lambda_i}}M_{z_{\lambda_i} \tilde{\vp}_{\lambda_i}}^*)M_{z_{\lambda_i}}^*
(I_{\cls_{\Phi_{\Lambda}}}-M_{z_{\lambda_j}\tilde{\vp}_{\lambda_j}}M_{z_{\lambda_j}\tilde{\vp}_{\lambda_j}}^*)P_{\clm_{i,j}},
\end{split}
	 \]
and
	 \[
\begin{split}
	 M_{z_{\lambda_j}}P_{\cls_{\Phi_{\Lambda}}}M_{z_{\lambda_j}}^* &=M_{z_{\lambda_i}\tilde{\vp}_{\lambda_i}}M_{z_{\lambda_i}\tilde{\vp}_{\lambda_i}}^*M_{z_{\lambda_j}}M_{z_{\lambda_j}}^*
	\oplus (I_{\cls_{\Phi_{\Lambda}}}-M_{z_{\lambda_i}\tilde{\vp}_{\lambda_i}}M_{z_{\lambda_i}\tilde{\vp}_{\lambda_i}}^*)M_{z_{\lambda_j}^2 \tilde{\vp}_{\lambda_j}}M_{z_{\lambda_j}^2 \tilde{\vp}_{\lambda_j}}^*
	 \\
	 &\oplus (I_{\cls_{\Phi_{\Lambda}}}-M_{z_{\lambda_i}\tilde{\vp}_{\lambda_i}}M_{z_{\lambda_i}\tilde{\vp}_{\lambda_i}}^*)M_{z_{\lambda_j}}(I_{\cls_{\Phi_{\Lambda}}}-M_{z_{\lambda_j}\tilde{\vp}_{\lambda_j}}M_{z_{\lambda_j}\tilde{\vp}_{\lambda_j}}^*)M_{z_{\lambda_j}}^*P_{\clm_{i,j}}.
\end{split}
	 \]
 Now let us look at the product $(M_{z_{\lambda_i}}P_{\cls_{\Phi_{\Lambda}}}M_{z_{\lambda_i}}^*)(M_{z_{\lambda_j}}P_{\cls_{\Phi_{\Lambda}}}M_{z_{\lambda_j}}^*)$ term wise. First, observe that
 \[
 \begin{split}
 M_{z_{\lambda_i}^2 \tilde{\vp}_{\lambda_i}}M_{z_{\lambda_i}^2\tilde{\vp}_{\lambda_i}}^* (M_{z_{\lambda_j}}P_{\cls_{\Phi_{\Lambda}}}M_{z_{\lambda_j}}^*)& = M_{z_{\lambda_i}^2 \tilde{\vp}_{\lambda_i}}M_{z_{\lambda_i}^2\tilde{\vp}_{\lambda_i}}^* M_{z_{\lambda_i}\tilde{\vp}_{\lambda_i}}M_{z_{\lambda_i}\tilde{\vp}_{\lambda_i}}^*M_{z_{\lambda_j}}M_{z_{\lambda_j}}^*\\
 & = M_{z_{\lambda_i}^2\tilde{\vp}_{\lambda_i}}M_{z_{\lambda_i}^2\tilde{\vp}_{\lambda_i}}^*M_{z_{\lambda_j}}M_{z_{\lambda_j}}^*.
 \end{split}
 \]
 as the other terms will vanish. Next, let us look at the term
\[
 \begin{split}
&M_{z_{\lambda_i}}(I_{\cls_{\Phi_{\Lambda}}}-M_{z_{\lambda_i}\tilde{\vp}_{\lambda_i}}M_{z_{\lambda_i}\tilde{\vp}_{\lambda_i}}^*)M_{z_{\lambda_i}}^* M_{z_{\lambda_j}\tilde{\vp}_{\lambda_j}}M_{z_{\lambda_j}\tilde{\vp}_{\lambda_j}}^* (M_{z_{\lambda_j}}P_{\cls_{\Phi_{\Lambda}}}M_{z_{\lambda_j}}^*)\\
&= M_{z_{\lambda_i}}(I_{\cls_{\Phi_{\Lambda}}}-M_{z_{\lambda_i}\tilde{\vp}_{\lambda_i}}M_{z_{\lambda_i}\tilde{\vp}_{\lambda_i}}^*)M_{z_{\lambda_i}}^* M_{z_{\lambda_j}\tilde{\vp}_{\lambda_j}}M_{z_{\lambda_j}\tilde{\vp}_{\lambda_j}}^* M_{z_{\lambda_i}\tilde{\vp}_{\lambda_i}}M_{z_{\lambda_i}\tilde{\vp}_{\lambda_i}}^*M_{z_{\lambda_j}}M_{z_{\lambda_j}}^*\\
&\oplus M_{z_{\lambda_i}}(I_{\cls_{\Phi_{\Lambda}}}-M_{z_{\lambda_i}\tilde{\vp}_{\lambda_i}}M_{z_{\lambda_i}\tilde{\vp}_{\lambda_i}}^*)M_{z_{\lambda_i}}^* M_{z_{\lambda_j}\tilde{\vp}_{\lambda_j}}M_{z_{\lambda_j}\tilde{\vp}_{\lambda_j}}^* (I_{\cls_{\Phi_{\Lambda}}}-M_{z_{\lambda_i}\tilde{\vp}_{\lambda_i}}M_{z_{\lambda_i}\tilde{\vp}_{\lambda_i}}^*)M_{z_{\lambda_j}^2 \tilde{\vp}_{\lambda_j}}M_{z_{\lambda_j}^2 \tilde{\vp}_{\lambda_j}}^*\\
&\oplus M_{z_{\lambda_i}}(I_{\cls_{\Phi_{\Lambda}}}-M_{z_{\lambda_i}\tilde{\vp}_{\lambda_i}}M_{z_{\lambda_i}\tilde{\vp}_{\lambda_i}}^*)M_{z_{\lambda_i}}^* M_{z_{\lambda_j}\tilde{\vp}_{\lambda_j}}M_{z_{\lambda_j}\tilde{\vp}_{\lambda_j}}^*  (I_{\cls_{\Phi_{\Lambda}}}-M_{z_{\lambda_i}\tilde{\vp}_{\lambda_i}}M_{z_{\lambda_i}\tilde{\vp}_{\lambda_i}}^*)M_{z_{\lambda_j}}\\
&(I_{\cls_{\Phi_{\Lambda}}}-M_{z_{\lambda_j}\tilde{\vp}_{\lambda_j}}M_{z_{\lambda_j}\tilde{\vp}_{\lambda_j}}^*)M_{z_{\lambda_j}}^*P_{\clm_{i,j}}.
\end{split}
\]
By re-arranging the terms we get
\[
 \begin{split}
&M_{z_{\lambda_i}}(I_{\cls_{\Phi_{\Lambda}}}-M_{z_{\lambda_i}\tilde{\vp}_{\lambda_i}}M_{z_{\lambda_i}\tilde{\vp}_{\lambda_i}}^*)M_{z_{\lambda_i}}^* M_{z_{\lambda_j}\tilde{\vp}_{\lambda_j}}M_{z_{\lambda_j}\tilde{\vp}_{\lambda_j}}^* (M_{z_{\lambda_j}}P_{\cls_{\Phi_{\Lambda}}}M_{z_{\lambda_j}}^*)\\
&= M_{z_{\lambda_i}}(I_{\cls_{\vp_{\Lambda}}}-M_{z_{\lambda_i}\tilde{\vp}_{\lambda_i}}M_{z_{\lambda_i}\tilde{\vp}_{\lambda_i}}^*) M_{\tilde{\vp}_{\lambda_i}}M_{z_{\lambda_i}\tilde{\vp}_{\lambda_i}}^*  M_{z_{\lambda_j}\tilde{\vp}_{\lambda_j}}M_{z_{\lambda_j}\tilde{\vp}_{\lambda_j}}^* \\
&\oplus M_{z_{\lambda_i}}(I_{\cls_{\vp_{\Lambda}}}-M_{z_{\lambda_i}\tilde{\vp}_{\lambda_i}}M_{z_{\lambda_i}\tilde{\vp}_{\lambda_i}}^*) (I_{\cls_{\Phi_{\Lambda}}}-M_{\tilde{\vp}_{\lambda_i}}M_{\tilde{\vp}_{\lambda_i}}^*) M_{z_{\lambda_i}}^*M_{z_{\lambda_j}^2 \tilde{\vp}_{\lambda_j}}M_{z_{\lambda_j}^2 \tilde{\vp}_{\lambda_j}}^*\\
&\oplus M_{z_{\lambda_i}}(I_{\cls_{\Phi_{\Lambda}}}-M_{z_{\lambda_i}\tilde{\vp}_{\lambda_i}}M_{z_{\lambda_i}\tilde{\vp}_{\lambda_i}}^*) (I_{\cls_{\Phi_{\Lambda}}}-M_{\tilde{\vp}_{\lambda_i}}M_{\tilde{\vp}_{\lambda_i}}^*) M_{z_{\lambda_i}}^* M_{z_{\lambda_j}\tilde{\vp}_{\lambda_j}} (I_{\cls_{\Phi_{\Lambda}}}- M_{z_{\lambda_j}}M_{z_{\lambda_j}}^*) M_{z_{\lambda_j} \tilde{\vp}_{\lambda_j}}^*P_{\clm_{i,j}}.
\end{split}
\]
Hence,
\[
\begin{split}
&M_{z_{\lambda_i}}(I_{\cls_{\Phi_{\Lambda}}}-M_{z_{\lambda_i}\tilde{\vp}_{\lambda_i}}M_{z_{\lambda_i}\tilde{\vp}_{\lambda_i}}^*)M_{z_{\lambda_i}}^* M_{z_{\lambda_j}\tilde{\vp}_{\lambda_j}}M_{z_{\lambda_j}\tilde{\vp}_{\lambda_j}}^* (M_{z_{\lambda_j}}P_{\cls_{\Phi_{\Lambda}}}M_{z_{\lambda_j}}^*)\\
&= M_{z_{\lambda_i} \tilde{\vp}_{\lambda_i}} (I_{\cls_{\Phi_{\Lambda}}}-M_{z_{\lambda_i}}M_{z_{\lambda_i}}^*) M_{z_{\lambda_i}\tilde{\vp}_{\lambda_i}}^*  M_{z_{\lambda_j}\tilde{\vp}_{\lambda_j}}M_{z_{\lambda_j}\tilde{\vp}_{\lambda_j}}^* \\
&\oplus M_{z_{\lambda_i}} (I_{\cls_{\Phi_{\Lambda}}}-M_{\tilde{\vp}_{\lambda_i}}M_{\tilde{\vp}_{\lambda_i}}^*) M_{z_{\lambda_i}}^*M_{z_{\lambda_j}^2 \tilde{\vp}_{\lambda_j}}M_{z_{\lambda_j}^2 \tilde{\vp}_{\lambda_j}}^* \\
&\oplus M_{z_{\lambda_i}} (I_{\cls_{\Phi_{\Lambda}}}-M_{\tilde{\vp}_{\lambda_i}}M_{\tilde{\vp}_{\lambda_i}}^*) M_{z_{\lambda_i}}^* M_{z_{\lambda_j}\tilde{\vp}_{\lambda_j}} (I_{\cls_{\Phi_{\Lambda}}}- M_{z_{\lambda_j}}M_{z_{\lambda_j}}^*) M_{z_{\lambda_j} \tilde{\vp}_{\lambda_j}}^*P_{\clm_{i,j}}.
\end{split}
\]
Now let us look at the final term, that is,
\[
\begin{split}
&M_{z_{\lambda_i}}(I_{\cls_{\Phi_{\Lambda}}}-M_{z_{\lambda_i}\tilde{\vp}_{\lambda_i}}M_{z_{\lambda_i} \tilde{\vp}_{\lambda_i}}^*)M_{z_{\lambda_i}}^*
(I_{\cls_{\Phi_{\Lambda}}}-M_{z_{\lambda_j}\tilde{\vp}_{\lambda_j}}M_{z_{\lambda_j}\tilde{\vp}_{\lambda_j}}^*)P_{\clm_{i,j}} (M_{z_{\lambda_j}}P_{\cls_{\Phi_{\Lambda}}}M_{z_{\lambda_j}}^*) \\
&= M_{z_{\lambda_i}}(I_{\cls_{\Phi_{\Lambda}}}-M_{z_{\lambda_i}\tilde{\vp}_{\lambda_i}}M_{z_{\lambda_i} \tilde{\vp}_{t_i}}^*)M_{z_{\lambda_i}}^*
(I_{\cls_{\Phi_{\Lambda}}}-M_{z_{\lambda_j}\tilde{\vp}_{\lambda_j}}M_{z_{\lambda_j}\tilde{\vp}_{\lambda_j}}^*)P_{\clm_{i,j}}  M_{z_{\lambda_i}\tilde{\vp}_{\lambda_i}}M_{z_{\lambda_i}\tilde{\vp}_{\lambda_i}}^*M_{z_{\lambda_j}}M_{z_{\lambda_j}}^*\\
&\oplus M_{z_{\lambda_i}}(I_{\cls_{\Phi_{\Lambda}}}-M_{z_{\lambda_i}\tilde{\vp}_{\lambda_i}}M_{z_{\lambda_i} \tilde{\vp}_{\lambda_i}}^*)M_{z_{\lambda_i}}^*
(I_{\cls_{\Phi_{\Lambda}}}-M_{z_{\lambda_j}\tilde{\vp}_{\lambda_j}}M_{z_{\lambda_j}\tilde{\vp}_{\lambda_j}}^*)P_{\clm_{i,j}}   (I_{\cls_{\Phi_{\Lambda}}}-M_{z_{\lambda_i}\tilde{\vp}_{\lambda_i}}M_{z_{\lambda_i}\tilde{\vp}_{\lambda_i}}^*)\\
&M_{z_{\lambda_j}^2 \tilde{\vp}_{\lambda_j}}M_{z_{\lambda_j}^2 \tilde{\vp}_{\lambda_j}}^*\\
&\oplus M_{z_{\lambda_i}}(I_{\cls_{\Phi_{\Lambda}}}-M_{z_{\lambda_i}\tilde{\vp}_{\lambda_i}}M_{z_{\lambda_i} \tilde{\vp}_{\lambda_i}}^*)M_{z_{\lambda_i}}^*
(I_{\cls_{\Phi_{\Lambda}}}-M_{z_{\lambda_j}\tilde{\vp}_{\lambda_j}}M_{z_{\lambda_j}\tilde{\vp}_{\lambda_j}}^*)P_{\clm_{i,j}}   (I_{\cls_{\Phi_{\Lambda}}}-M_{z_{\lambda_i}\tilde{\vp}_{\lambda_i}}M_{z_{\lambda_i}\tilde{\vp}_{\lambda_i}}^*)\\
&M_{z_{\lambda_j}}(I_{\cls_{\Phi_{\Lambda}}}-M_{z_{\lambda_j}\tilde{\vp}_{\lambda_j}}M_{z_{\lambda_j}\tilde{\vp}_{\lambda_j}}^*)M_{z_{\lambda_j}}^*P_{\clm_{i,j}}.
\end{split}
\]
Again, by re-arranging the terms we get
\[
\begin{split}
&M_{z_{\lambda_i}}(I_{\cls_{\Phi_{\Lambda}}}-M_{z_{\lambda_i}\tilde{\vp}_{\lambda_i}}M_{z_{\lambda_i} \tilde{\vp}_{\lambda_i}}^*)M_{z_{\lambda_i}}^*
(I_{\cls_{\Phi_{\Lambda}}}-M_{z_{\lambda_j}\tilde{\vp}_{\lambda_j}}M_{z_{\lambda_j}\tilde{\vp}_{\lambda_j}}^*)P_{\clm_{i,j}} (M_{z_{\lambda_j}}P_{\cls_{\Phi_{\Lambda}}}M_{z_{\lambda_j}}^*) \\
&= M_{z_{\lambda_i} \tilde{\vp}_{\lambda_i}} (I_{\cls_{\Phi_{\Lambda}}}-M_{z_{\lambda_i}}M_{z_{\lambda_i}}^*)M_{z_{\lambda_i}\tilde{\vp}_{\lambda_i}}^*
M_{z_{\lambda_j}} (I_{\cls_{\Phi_{\Lambda}}}-M_{\tilde{\vp}_{\lambda_j}}M_{\tilde{\vp}_{\lambda_j}}^*)  M_{z_{\lambda_j}}^* P_{\clm_{i,j}}\\
&\oplus 0 \\
&\oplus M_{z_{\lambda_i}} (I_{\cls_{\Phi_{\Lambda}}}-M_{\tilde{\vp}_{\lambda_i}}M_{\tilde{\vp}_{\lambda_i}}^*)M_{z_{\lambda_i}}^* M_{z_{\lambda_j}}
(I_{\cls_{\Phi_{\Lambda}}}-M_{\tilde{\vp}_{\lambda_j}}M_{\tilde{\vp}_{\lambda_j}}^*) M_{z_{\lambda_j}}^*P_{\clm_{i,j}}.
\end{split}
\]
Therefore, we get
\[
\begin{split}
&(M_{z_{\lambda_i}}P_{\cls_{\Phi_{\Lambda}}}M_{z_{\lambda_i}}^*)(M_{z_{\lambda_j}}P_{\cls_{\Phi_{\Lambda}}}M_{z_{\lambda_j}}^*)\\
&= M_{z_{\lambda_i}^2\tilde{\vp}_{\lambda_i}}M_{z_{\lambda_i}^2\tilde{\vp}_{\lambda_i}}^*M_{z_{\lambda_j}}M_{z_{\lambda_j}}^*\\
&\oplus M_{z_{\lambda_i} \tilde{\vp}_{\lambda_i}} (I_{\cls_{\Phi_{\Lambda}}}-M_{z_{\lambda_i}}M_{z_{\lambda_i}}^*) M_{z_{\lambda_i}\tilde{\vp}_{\lambda_i}}^*  M_{z_{\lambda_j}\tilde{\vp}_{\lambda_j}}M_{z_{\lambda_j}\tilde{\vp}_{\lambda_j}}^* \\
&\oplus M_{z_{\lambda_i}} (I_{\cls_{\Phi_{\Lambda}}}-M_{\tilde{\vp}_{\lambda_i}}M_{\tilde{\vp}_{\lambda_i}}^*) M_{z_{\lambda_i}}^*M_{z_{\lambda_j}^2 \tilde{\vp}_{\lambda_j}}M_{z_{\lambda_j}^2 \tilde{\vp}_{\lambda_j}}^*\\
&\oplus M_{z_{\lambda_i}} (I_{\cls_{\Phi_{\Lambda}}}-M_{\tilde{\vp}_{\lambda_i}}M_{\tilde{\vp}_{\lambda_i}}^*) M_{z_{\lambda_i}}^* M_{z_{\lambda_j}\tilde{\vp}_{\lambda_j}} (I_{\cls_{\Phi_{\Lambda}}}- M_{z_{\lambda_j}}M_{z_{\lambda_j}}^*) M_{z_{\lambda_j} \tilde{\vp}_{\lambda_j}}^*P_{\clm_{i,j}}\\
&\oplus M_{z_{\lambda_i} \tilde{\vp}_{\lambda_i}} (I_{\cls_{\Phi_{\Lambda}}}-M_{z_{\lambda_i}}M_{z_{\lambda_i}}^*)M_{z_{\lambda_i}\tilde{\vp}_{\lambda_i}}^*
M_{z_{\lambda_j}} (I_{\cls_{\Phi_{\Lambda}}}-M_{\tilde{\vp}_{\lambda_j}}M_{\tilde{\vp}_{\lambda_j}}^*)  M_{z_{\lambda_j}}^* P_{\clm_{i,j}}\\
&\oplus M_{z_{\lambda_i}} (I_{\cls_{\Phi_{\Lambda}}}-M_{\tilde{\vp}_{\lambda_i}}M_{\tilde{\vp}_{\lambda_i}}^*)M_{z_{\lambda_i}}^* M_{z_{\lambda_j}}
(I_{\cls_{\Phi_{\Lambda}}}-M_{\tilde{\vp}_{\lambda_j}}M_{\tilde{\vp}_{\lambda_j}}^*) M_{z_{\lambda_j}}^*P_{\clm_{i,j}}.
\end{split}
\]
We observe that the right-hand side of the above expression is self-adjoint and hence, $M_{z_{\lambda_i}}P_{\cls_{\Phi_{\Lambda}}}M_{z_{\lambda_i}}^*$ commutes with $M_{z_{\lambda_j}}P_{\cls_{\Phi_{\Lambda}}}M_{z_{\lambda_j}}^*$.  In other words,  $I_{\cls_{\Phi_{\Lambda}}}-R_{z_{\lambda_i}}R_{z_{\lambda_i}}^* $ commutes with $ I_{\cls_{\Phi_{\Lambda}}}-R_{z_{\lambda_j}}R_{z_{\lambda_j}}^*$. This completes the proof.
\end{proof}

\begin{cor}
Let $\cls_{\Phi_{\Lambda}} = \sum_{\lambda \in \Lambda} \vp_\lambda(z_\lambda) H^2(\D^n)$ be a submodule of $H^2(\D^n)$ where $\vp_{t}$ are non-constant inner functions, and $\Lambda = \{\lambda_1,\ldots, \lambda_k\}$.  Then the following tuple of orthogonal projections
\[
\big( I_{\cls_{\Phi_{\Lambda}}}-R_{z_{\lambda_1}}R_{z_{\lambda_i}}^*,\ldots, I_{\cls_{\Phi_{\Lambda}}}-R_{z_{\lambda_k}}R_{z_{\lambda_k}}^* \big)
\]
 is commuting  if and only if $\vp_{\lambda}(0)=0$ for all $\lambda \in \Lambda$.
\end{cor}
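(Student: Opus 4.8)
The plan is to obtain this corollary as a direct consequence of Theorem \ref{commuting defects}, which already settles the pairwise case. The first observation is purely formal: a tuple of operators is commuting exactly when every pair of its entries commutes. Hence the tuple
\[
\big( I_{\cls_{\Phi_{\Lambda}}}-R_{z_{\lambda_1}}R_{z_{\lambda_1}}^*,\ldots, I_{\cls_{\Phi_{\Lambda}}}-R_{z_{\lambda_k}}R_{z_{\lambda_k}}^* \big)
\]
is commuting if and only if
\[
[(I_{\cls_{\Phi_{\Lambda}}}-R_{z_{\lambda_i}}R_{z_{\lambda_i}}^*), (I_{\cls_{\Phi_{\Lambda}}}-R_{z_{\lambda_j}}R_{z_{\lambda_j}}^*)] = 0
\]
holds for every distinct $\lambda_i, \lambda_j \in \Lambda$. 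Thus the entire corollary reduces to reading off Theorem \ref{commuting defects} over all such pairs.

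For the reverse direction, I would assume $\vp_\lambda(0) = 0$ for all $\lambda \in \Lambda$. Then, fixing any distinct $\lambda_i, \lambda_j \in \Lambda$, both hypotheses $\vp_{\lambda_i}(0)=0$ and $\vp_{\lambda_j}(0)=0$ of Theorem \ref{commuting defects} are met, so the corresponding defects commute. Since this is true for every pair, the tuple is commuting.

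For the forward direction, I would assume the tuple is commuting, so every pair of defects commutes. Because $k = |\Lambda| \geq 2$, each index $\lambda \in \Lambda$ may be paired with some distinct $\mu \in \Lambda$; applying the \emph{only if} direction of Theorem \ref{commuting defects} to this pair yields $\vp_\lambda(0) = 0$. Letting $\lambda$ range over all of $\Lambda$ gives $\vp_\lambda(0)=0$ for every $\lambda$, completing the proof.

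Since each step is merely an invocation of the previously established Theorem \ref{commuting defects}, there is no genuine analytic obstacle here. The only point deserving a moment of care is the combinatorial bookkeeping: for $k \geq 2$ every individual index occurs in at least one distinct pair, which is precisely what allows the pairwise vanishing conditions to propagate to each $\vp_\lambda$ separately. (In the degenerate case $k=1$ the tuple commutes trivially and the statement carries no content, so throughout we assume $k \geq 2$.)
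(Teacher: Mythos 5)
Your proposal is correct and matches the paper's (implicit) argument exactly: the paper states this corollary immediately after Theorem \ref{commuting defects} without further proof, precisely because the tuple commutes if and only if each pair of defects commutes, and the theorem applied pairwise then forces $\vp_\lambda(0)=0$ for every $\lambda \in \Lambda$. Your remark about the degenerate case $k=1$ (where the equivalence would be vacuous or fail) is a sensible precaution that the paper leaves unstated.
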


Before ending this section, let us highlight an example with explicit computations.

\begin{ex} Let $0<|\alpha|<1$ and let $b_{\alpha}(z)=\frac{z-\alpha}{1-\overline{\alpha}z}$, $z\in \D$ be a single Blaschke function.
Let us consider the submodule $\cls=b_{\alpha}(z_1)H^2(\D^2)+b_{\alpha}(z_2)H^2(\D^2) \subseteq H^2(\D^2)$ and choose an element  $f(\bm{z}) :=[b_{\alpha}(z_1)+b_{\alpha}(z_2)]b_{\alpha}(z_2)$ in $\cls$. First, observe that 
\[
M_{z}^*b_{\alpha}(z)=(1-|\alpha|^2) s_{\alpha}(z)  \quad \text{and }\quad M_z^*b_{\alpha}^2(z)=(1-|\alpha|^2)(b_{\alpha}(z)-\alpha) s_{\alpha}(z),
\]
where $s_{\alpha}(z)=(1-\overline{\alpha}z)^{-1} \in H^2(\D)$. It is well-known that the corresponding Szeg\"o kernel for $H^2(\D^2)$ is  $s_{(\alpha_1, \alpha_2)}(z_1, z_2) :=\underset{i=1}{\overset{2}{\Pi}} \frac{1}{1 - z_i \bar{\alpha}_i}$. Now, 
\[
\begin{split}
R_{z_1}R_{z_1}^*f=M_{z_1}P_{\cls}M_{z_1}^*f&=M_{z_1}P_{\cls}M_{z_1}^*b_{\alpha}(z_1)b_{\alpha}(z_2)
\\
&=M_{z_1}P_{\cls}b_{\alpha}(z_2)M_{z_1}^*b_{\alpha}(z_1)
\\
&=(1-|\alpha|^2)z_1s_{(\alpha,0)}(z_1, z_2)b_{\alpha}(z_2),
\end{split}
\]
Note that $P_{\cls}^{\perp} = (I_{H^2(\D^2)} - M_{b_{\alpha}(z_1)} M_{b_{\alpha}(z_1)}^*) (I_{H^2(\D^2)} - M_{b_{\alpha}(z_2)} M_{b_{\alpha}(z_2)}^*)$, and therefore
\[
\begin{split}
&P_{\cls}^{\perp} z_1 s_{(\alpha,0)}(z_1,z_2) s_{(0,\alpha)}(z_1,z_2) \\
&= (I_{H^2(\D^2)} - M_{b_{\alpha}(z_1)} M_{b_{\alpha}(z_1)}^*) (I_{H^2(\D^2)} - M_{b_{\alpha}(z_2)} M_{b_{\alpha}(z_2)}^*) z_1 s_{(\alpha,0)}(z_1,z_2) s_{(0,\alpha)}(z_1,z_2)\\
&= s_{(0,\alpha)}(z_1,z_2) (I_{H^2(\D^2)} - M_{b_{\alpha}(z_1)} M_{b_{\alpha}(z_1)}^*) z_1 s_{(\alpha,0)}(z_1,z_2)
\end{split}
\]
Now, for any $\lambda=(\lambda_1, \lambda_2) \in \D^2$ we get
\[
\begin{split}
 \big \langle  M_{b_{\alpha}(z_1)}^* z_1 s_{(\alpha,0)}(z_1,z_2), s_{\lambda}(z_1,z_2) \rangle &=  \langle   s_{(\alpha,0)}(z_1,z_2), M_{z_1}^* b_{\alpha}(z_1) s_{\lambda}(z_1,z_2) \rangle\\
&=   \big  \langle  s_{(\alpha,0)}(z_1, z_2),  \frac{b_{\alpha}(z_1) s_{\lambda}(z_1,z_2) + \alpha s_{\lambda}(0,z_2) }{z_1}  \big  \rangle\\
&= 1
\end{split}
\]
which gives $ M_{b_{\alpha}(z_1)}^* z_1 s_{(\alpha,0)}(z_1,z_2) = 1$, and therefore,
\[
(I_{H^2(\D^2)} - M_{b_{\alpha}(z_1)} M_{b_{\alpha}(z_1)}^*) z_1 s_{(\alpha,0)}(z_1,z_2) = z_1 s_{(\alpha,0)}(z_1,z_2) - b_{\alpha}(z_1) = \alpha s_{(\alpha,0)}(z_1,z_2)
\]
 Using this we get
\[
\begin{split}
&P_{\cls}  z_1 s_{(\alpha,0)}(z_1,z_2) s_{(0,\alpha)}(z_1,z_2) \\
&= z_1 s_{(\alpha,0)}(z_1,z_2) s_{(0,\alpha)}(z_1,z_2) - P_{\cls}^{\perp}  z_1 s_{(\alpha,0)}(z_1,z_2) s_{(0,\alpha)}(z_1,z_2) \\
&=  z_1 s_{(\alpha,0)}(z_1,z_2) s_{(0,\alpha)}(z_1,z_2) - \alpha s_{(\alpha,0)}(z_1,z_2) s_{(0,\alpha)}(z_1,z_2)\\
&=  (z_1 - \alpha) s_{(\alpha,0)}(z_1,z_2) s_{(0,\alpha)}(z_1,z_2)\\
&=  b_{\alpha}(z_1) s_{(0,\alpha)}(z_1,z_2).
\end{split}
\]
and thus,
\[
\begin{split}
R_{z_2}R_{z_2}^*R_{z_1}R_{z_1}^*f &=(1-|\alpha|^2)M_{z_2}P_{\cls}M_{z_2}^*z_1s_{(\alpha,0)}(z_1,z_2)b_{\alpha}(z_2) \\
&=(1-|\alpha|^2)^2 z_2 P_{\cls} z_1 s_{(\alpha,0)}(z_1,z_2) s_{(0,\alpha)}(z_1,z_2)\\
&=(1-|\alpha|^2)^2 z_2 b_{\alpha}(z_1) s_{(0,\alpha)}(z_1,z_2).
\end{split}
\]
Now, let us again note that
\[
P_{\cls}^{\perp}s_{(0,\alpha)}(z_1,z_2) = (I_{H^2(\D^2)} - M_{b_{\alpha}(z_1)} M_{b_{\alpha}(z_1)}^*) s_{(0,\alpha)}(z_1,z_2) = (1 + \bar{\alpha} b_{\alpha}(z_1)) s_{(0,\alpha)}(z_1,z_2),
\]
which gives $P_{\cls} s_{(0,\alpha)}(z_1,z_2) = -\bar{\alpha} b_{\alpha}(z_1)) s_{(0,\alpha)}(z_1,z_2)$. Using this, we get
\[
\begin{split}
R_{z_2}R_{z_2}^*f&=M_{z_2}P_{\cls}M_{z_2}^*f
\\&=M_{z_2}P_{\cls}M_{z_2}^*b_{\alpha}(z_1)b_{\alpha}(z_2)+M_{z_2}P_{\cls}M_{z_2}^*b_{\alpha}^2(z_2)
\\&=(1-|\alpha|^2)b_{\alpha}(z_1)z_2s_{(0,\alpha)}(z_1,z_2)+M_{z_2}P_{\cls}M_{z_2}^*b_{\alpha}^2(z_2)
\\& =(1-|\alpha|^2) z_2[b_{\alpha}(z_1) s_{(0,\alpha)}(z_1,z_2) + P_{\cls} (b_{\alpha}(z_2)-\alpha) s_{(0,\alpha)}(z_1,z_2)]
\\& =(1-|\alpha|^2) z_2 s_{(0,\alpha)}(z_1,z_2) [b_{\alpha}(z_1)+ b_{\alpha}(z_2) + |\alpha|^2 b_{\alpha}(z_1)]
\\& =(1-|\alpha|^2) z_2 s_{(0,\alpha)}(z_1,z_2) \big((1 + |\alpha|^2) b_{\alpha}(z_1)+ b_{\alpha}(z_2) \big),
\end{split}
\]
and hence,
\[
\begin{split}
R_{z_1}R_{z_1}^*R_{z_2}R_{z_2}^*f &= (1-|\alpha|^2) R_{z_1}R_{z_1}^* z_2 s_{(0,\alpha)}(z_1,z_2) \big((1 + |\alpha|^2) b_{\alpha}(z_1)+ b_{\alpha}(z_2) \big)\\
&= (1-|\alpha|^2) [ (1 + |\alpha|^2) M_{z_1}P_{\cls}M_{z_1}^*b_{\alpha}(z_1) z_2 s_{(0,\alpha)}(z_1,z_2)\\
&+ M_{z_1}P_{\cls}M_{z_1}^* z_2 b_{\alpha}(z_2)  s_{(0,\alpha)}(z_1,z_2)]
\\
&=(1-|\alpha|^2)^2 (1 + |\alpha|^2) z_1 P_{\cls} z_2 s_{(\alpha,0)}(z_1,z_2) s_{(0,\alpha)}(z_1,z_2).
\end{split}
\]
Similarly, as above $P_{\cls} z_2 s_{(\alpha,0)}(z_1,z_2) s_{(0,\alpha)}(z_1,z_2) = s_{(\alpha,0)}(z_1,z_2)b_{\alpha}(z_2)$, and hence,
\[
R_{z_1}R_{z_1}^*R_{z_2}R_{z_2}^*f  = (1-|\alpha|^2)^2 (1 + |\alpha|^2) z_1 s_{(\alpha,0)}(z_1,z_2)b_{\alpha}(z_2).
\]
Thus
\[
\begin{split}
&R_{z_1}R_{z_1}^*R_{z_2}R_{z_2}^*f-R_{z_2}R_{z_2}^*R_{z_1}R_{z_1}^*f \\
&= (1-|\alpha|^2)^2 [(1 + |\alpha|^2) z_1 s_{(\alpha,0)}(z_1,z_2)b_{\alpha}(z_2) - z_2 b_{\alpha}(z_1) s_{(0,\alpha)}(z_1,z_2)].
\end{split}
\]
Hence, $R_{z_1}R_{z_1}^*R_{z_2}R_{z_2}^*f = R_{z_2}R_{z_2}^*R_{z_1}R_{z_1}^*f$, will imply that $$(1 + |\alpha|^2) z_1 s_{(\alpha,0)}(z_1,z_2)b_{\alpha}(z_2) = z_2 b_{\alpha}(z_1) s_{(0,\alpha)}(z_1,z_2),$$ which is clearly not true.  Thus, we get that $[R_{z_1}R_{z_1}^*, R_{z_2}R_{z_2}^*] \neq 0$ on $\cls$.
\end{ex}

\newsection{Commuting projections}\label{commuting}

This section aims to give a complete characterization of commuting pairs of projections onto submodules $\cls_{\Phi_{\Lambda}}$, and $\cls_{\Psi_{\Gamma}}$ of $H^2(\D^n)$, where $\Lambda, \Gamma \subseteq \{1,\ldots,n\}$.  We will see that we can deduce this result by solving the question for the following submodules.
\[
\cls_{\Phi} = \sum_{i=1}^n \vp_i(z_i) H^2(\D^n); \quad  \cls_{\Psi} = \sum_{j=1}^n \psi_j(z_j) H^2(\D^n).
\]
It is worth reminding the readers that 
\[
P_{\cls_{\Phi}} =  \sum_{i=1}^n M_{\vp_{i}} M_{\vp_{i}}^* \big( \underset{k>i}{\Pi} (I_{H^2(\D^n)} - M_{\vp_{k}} M_{\vp_{k}}^*) \big)
\]
Let us now highlight a certain symmetry about these submodules. Note that for any $j \in \{1,\ldots,n\}$, the submodule $\cls_{\Phi}$ can again be realized as
\[
\cls_{\Phi,j} = \vp_1 H^2(\D^n) + \ldots +\vp_{j-1} H^2(\D^n) +\vp_n H^2(\D^n) + \vp_{j+1} H^2(\D^n) + \ldots + \vp_j H^2(\D^n).
\]
This is to say that the submodule remains the same even if we switch the $j$-th component with the $n$-th one. So, for any fixed but arbitrary $j \in \{1,\ldots,n\}$, we consider the tuple $(M_{\vp_1,j}, \ldots, M_{\vp_n,j})$, where
\[
M_{\vp_i,j} = M_{\vp_i} \quad (i \neq j, n)
\]
and
\[
M_{\vp_j,j} = M_{\vp_n}; \quad M_{\vp_n,j} = M_{\vp_j}. 
\]
For instance, if $j=5$, then
\[
(M_{\vp_1,5}, \ldots, M_{\vp_n,5}) = (M_{\vp_1},  \ldots, M_{\vp_4}, M_{\vp_n}, M_{\vp_6}, \ldots,, M_{\vp_{n-1}}, M_{\vp_5}).
\]
Using this re-arrangement, we can always re-write the projection $P_{\cls_{\Phi}}$ as follows. 
\[
P_{\cls_{\Phi}} = P_{\cls_{\Phi,j}} = \sum_{i=1}^n M_{\vp_i,j}M_{\vp_i,j}^* \underset{k>i}{\overset{n}{\Pi}} (I_{H^2(\D^n)} - M_{\vp_k,j}M_{\vp_k,j}^*).
\]

We can collect the above observations into the following result.

\begin{lem}\label{st}
Let $\cls=\cls_{\Phi}$ be a submodule of $H^2(\D^n)$. Then for any $j \in \{1,\ldots,n\}$
\[
P_{\cls_{\Phi}} = P_{\cls_{\Phi,j}} = \sum_{i=1}^n M_{\vp_i,j}M_{\vp_i,j}^* \underset{k>i}{\overset{n}{\Pi}} (I_{H^2(\D^n)} - M_{\vp_k,j}M_{\vp_k,j}^*) = \sum_{i=1}^n P_{\vp_i,j},
\]
where $P_{\vp_i,j} = M_{\vp_i,j}M_{\vp_i,j}^* \underset{k>i}{\overset{n}{\Pi}} (I_{H^2(\D^n)} - M_{\vp_k,j}M_{\vp_k,j}^*) $.
\end{lem}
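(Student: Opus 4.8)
The plan is to reduce the whole statement to a single order-independence property of the commuting family of projections $Q_i := M_{\vp_i}M_{\vp_i}^*$. First I would record that, since each $\vp_i$ is inner and depends only on $z_i$, the operator $Q_i$ is the orthogonal projection of $H^2(\D^n)$ onto $\vp_i H^2(\D^n)$, and that these projections mutually commute, $Q_iQ_k = Q_kQ_i$, simply because $Q_i$ acts only through the $z_i$-variable while $Q_k$ acts only through the $z_k$-variable. All three expressions appearing in the statement are assembled from one and the same family $\{Q_1,\dots,Q_n\}$; the only difference between them is the order in which the factors are multiplied. Hence the entire content of the lemma is that the telescoping sum does not see this order.

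Next I would establish the purely algebraic identity, valid for any finite family of mutually commuting projections,
\[
\sum_{i=1}^n Q_i \prod_{k>i}\big(I_{H^2(\D^n)}-Q_k\big) = I_{H^2(\D^n)} - \prod_{i=1}^n \big(I_{H^2(\D^n)}-Q_i\big),
\]
which I would prove by the one-line telescoping $Q_i\prod_{k>i}(I-Q_k) = \prod_{k>i}(I-Q_k)-\prod_{k\ge i}(I-Q_k)$ followed by summation in $i$. The crucial feature of the right-hand side is that, because the factors $I-Q_i$ commute, the product $\prod_i(I-Q_i)$ is completely symmetric in the indices. Moreover $\prod_i(I-Q_i)$ is the orthogonal projection onto $\bigcap_i \big(\vp_i H^2(\D^n)\big)^\perp = \big(\sum_i \vp_i H^2(\D^n)\big)^\perp = \cls_\Phi^\perp$, so the right-hand side equals $P_{\cls_\Phi}$. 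Combined with the displayed formula for $P_{\cls_\Phi}$ this both re-derives that formula and, more importantly, exhibits $P_{\cls_\Phi}$ as a symmetric function of the family $\{Q_i\}$.

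Finally I would apply this to the transposition that swaps the $j$-th and $n$-th slots. By construction the family $\{M_{\vp_i,j}M_{\vp_i,j}^*\}_{i=1}^n$ is exactly $\{Q_i\}_{i=1}^n$ reordered by the transposition $(j\ n)$, so the identity above yields
\[
\sum_{i=1}^n M_{\vp_i,j}M_{\vp_i,j}^*\prod_{k>i}\big(I_{H^2(\D^n)}-M_{\vp_k,j}M_{\vp_k,j}^*\big) = I_{H^2(\D^n)}-\prod_{i=1}^n\big(I_{H^2(\D^n)}-Q_i\big) = P_{\cls_\Phi}.
\]
Since the sum of subspaces is commutative, $\cls_{\Phi,j}$ is literally the same subspace as $\cls_\Phi$, whence $P_{\cls_{\Phi,j}} = P_{\cls_\Phi}$; and the remaining equality with $\sum_{i=1}^n P_{\vp_i,j}$ is just the definition of $P_{\vp_i,j}$. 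This chains together all the equalities asserted in the lemma. I do not expect any genuine obstacle: once the commutativity $[Q_i,Q_k]=0$ is in hand the argument is entirely formal, and the one point worth stating carefully is precisely that this commutativity is what renders $\prod_i(I-Q_i)$—and therefore $P_{\cls_\Phi}$—insensitive to the ordering of the generators.
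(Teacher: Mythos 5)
Your proof is correct, but it takes a more self-contained route than the paper does. The paper obtains this lemma with essentially no new argument: it invokes the orthogonal decomposition $\cls_{\Phi} = \cls_1 \oplus \cdots \oplus \cls_n$ with $\cls_d = \mbox{ran}\, M_{\vp_d}M_{\vp_d}^*\prod_{k>d}(I_{H^2(\D^n)} - M_{\vp_k}M_{\vp_k}^*)$, quoted from \cite[Lemma 2.5]{Sarkar} (this is the paper's display (\ref{structure}) and (\ref{proj2})), observes that the subspace $\cls_{\Phi}$ is literally unchanged when the $j$-th and $n$-th generators are swapped, and applies the cited formula to the permuted list of generators. You instead re-derive the projection formula from scratch: the telescoping identity $\sum_i Q_i\prod_{k>i}(I-Q_k) = I - \prod_i(I-Q_i)$ (which, as you implicitly use, needs no commutativity at all, only a consistent ordering of factors), the double commutation $[Q_i,Q_k]=0$ coming from the variables being disjoint, the identification of $\prod_i(I-Q_i)$ as the projection onto $\bigcap_i(\vp_i H^2(\D^n))^{\perp} = \cls_{\Phi}^{\perp}$, and then symmetry of the right-hand side under the transposition $(j\ n)$. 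What your approach buys is independence from the external reference and an explicit, manifestly order-symmetric closed form $P_{\cls_{\Phi}} = I_{H^2(\D^n)} - \prod_i(I-Q_i)$ — a formula the paper uses elsewhere (e.g.\ in the proofs of Theorems \ref{comm_projn} and \ref{finite-rank}) but never isolates as the reason this lemma holds; it also works verbatim for any finite commuting family of orthogonal projections on any Hilbert space. What the paper's route buys is the extra structural information in \cite[Lemma 2.5]{Sarkar} that the summands $\cls_d$ are mutually orthogonal closed subspaces (so each $P_{\vp_i,j}$ is a projection and the sum is an orthogonal one), which your telescoping argument does not by itself record, though it follows at once from the commutativity you established and is not needed for the equalities asserted in the lemma.
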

The above result implies that the initial configuration is $P_{\cls_{\Phi,n}}$. Let us now collect some useful identities. For any $j \in \{1,\ldots,n\}$, let $P_{n,j} :=M_{\vp_{n,j}}^*M_{\psi_{n,j}} = M_{\vp_j}^* M_{\psi_j}$. 
\begin{equation}\label{condn1}
M_{\vp_{n,j}}^*(I_{H^2(\D^n)}-M_{\psi_{n,j}}M_{\psi_{n,j}}^*)M_{\vp_{n,j}}M_{\vp_{n,j}}^*M_{\psi_{n,j}}=(P_{n,j} -P_{n,j}P_{n,j}^*P_{n,j}),
\end{equation}
\begin{equation}\label{condn2}
M_{\vp_{n,j}}^*(I_{H^2(\D^n)}-M_{\psi_{n,j}}M_{\psi_{n,j}}^*)(I_{H^2(\D^n)}-M_{\vp_{n,j}}M_{\vp_{n,j}}^*)M_{\psi_{n,j}}=-(P_{n,j} -P_{n,j}P_{n,j}^*P_{n,j}).
\end{equation}
Let us now note a result that we will use to prove the main result of this section.
\begin{lem}\label{non-zero}
Let $\vp, \psi \in H^{\infty}(\D)$ be non-constant inner functions, then $$(I_{H^2(\D)} - M_{\vp} M_{\vp}^*) (I_{H^2(\D)} - M_{\psi} M_{\psi}^*) \neq 0.$$
\end{lem}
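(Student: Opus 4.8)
The plan is to read each factor as the orthogonal projection onto a model space and then show the two ranges cannot be orthogonal. Writing $\clq_{\vp}:=H^2(\D)\ominus \vp H^2(\D)$ and $\clq_{\psi}:=H^2(\D)\ominus \psi H^2(\D)$, the operator $I_{H^2(\D)}-M_{\vp}M_{\vp}^*$ is exactly the orthogonal projection onto $\clq_{\vp}$, and likewise $I_{H^2(\D)}-M_{\psi}M_{\psi}^*$ is the projection onto $\clq_{\psi}$. For orthogonal projections $P,Q$ one has $PQ=0$ if and only if $\text{ran}\,Q\subseteq(\text{ran}\,P)^{\perp}$, so $(I_{H^2(\D)}-M_{\vp}M_{\vp}^*)(I_{H^2(\D)}-M_{\psi}M_{\psi}^*)\neq 0$ is equivalent to $\clq_{\psi}\not\subseteq\clq_{\vp}^{\perp}=\vp H^2(\D)$; that is, the two model spaces are not orthogonal. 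I would argue this by contradiction, assuming $\clq_{\psi}\subseteq \vp H^2(\D)$.

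The crucial step is to exhibit a single nonzero element of $\clq_{\psi}$ that is bounded away from zero. I would take the reproducing kernel of $\clq_{\psi}$ at the origin, namely $v:=(I_{H^2(\D)}-M_{\psi}M_{\psi}^*)1=1-\overline{\psi(0)}\psi\in\clq_{\psi}$. Since $\psi$ is a non-constant inner function, the maximum modulus principle gives $|\psi(0)|<1$, and hence for every $z\in\D$, $|v(z)|\geq 1-|\psi(0)|\,|\psi(z)|\geq 1-|\psi(0)|>0$. In particular $v\neq 0$, and $v$ is a bounded analytic function bounded below, so its reciprocal $1/v$ again lies in $H^{\infty}(\D)$. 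To finish, under the assumption $\clq_{\psi}\subseteq \vp H^2(\D)$ we would have $v\in \vp H^2(\D)$, say $v=\vp g$ with $g\in H^2(\D)$. Multiplying by $1/v\in H^{\infty}(\D)$ yields $1=\vp\,(g/v)$ with $g/v\in H^2(\D)$, i.e. $1\in \vp H^2(\D)$. But $1\in \vp H^2(\D)$ forces $1/\vp\in H^2(\D)$ with $|1/\vp|=1$ a.e. on $\T$, which makes $\vp$ a unimodular constant, contradicting that $\vp$ is non-constant. Hence $\clq_{\psi}\not\subseteq \vp H^2(\D)$, and the product is nonzero.

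The main obstacle is the passage through $H^{\infty}(\D)$: a generic nonzero element of $\clq_{\psi}$ need not be bounded away from zero, so the reciprocal argument could fail for an arbitrary choice. The entire proof hinges on selecting the origin reproducing kernel $1-\overline{\psi(0)}\psi$, whose modulus is controlled from below precisely because $|\psi(0)|<1$. Verifying this strict inequality (from non-constancy of $\psi$) and the resulting invertibility of $v$ in $H^{\infty}(\D)$ is the heart of the matter; once this is in hand, the contradiction $1\in\vp H^2(\D)$ is immediate.
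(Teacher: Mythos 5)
Your proof is correct, but it takes a genuinely different route from the paper's. The paper expands the product and notes that if it vanished then $M_{\vp}M_{\vp}^*M_{\psi}M_{\psi}^*$ would be self-adjoint, so the two inner projections $M_{\vp}M_{\vp}^*$ and $M_{\psi}M_{\psi}^*$ would commute; it then invokes \cite[Theorem 2.2]{DPS} to conclude that $\vp$ divides $\psi$ or $\psi$ divides $\vp$, which forces the product to equal $I_{H^2(\D)}-M_{\vp}M_{\vp}^*$ or $I_{H^2(\D)}-M_{\psi}M_{\psi}^*$, both nonzero for non-constant inner functions --- a contradiction. You instead reformulate the vanishing of the product as the geometric statement $\clq_{\psi}\subseteq \vp H^2(\D)$ between model spaces and exhibit an explicit witness: the kernel $v=1-\overline{\psi(0)}\psi$ of $\clq_{\psi}$ at the origin, which is invertible in $H^{\infty}(\D)$ because $|\psi(0)|<1$, so that $v\in\vp H^2(\D)$ would give $1/\vp\in H^2(\D)$ and hence force $\vp$ to be a unimodular constant. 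What each approach buys: the paper's argument is very short given that the DPS divisibility theorem is already part of its toolkit (it is cited repeatedly elsewhere), while yours is self-contained and elementary, avoiding that external result entirely and producing a concrete non-orthogonality witness. One step you compress slightly: the claim that $1/\vp\in H^2(\D)$ with unimodular boundary values forces $\vp$ to be constant deserves a line --- for instance, $H^2(\D)$ lies in the Smirnov class $N^+$, so Smirnov's maximum principle shows $1/\vp$ is itself inner, whence $|\vp|\equiv 1$ on $\D$ and $\vp$ is constant by the maximum modulus principle --- but this is a standard fact and the reasoning is sound.
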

\begin{proof}
Note that the above product is
\[
(I_{H^2(\D)} - M_{\vp} M_{\vp}^*) (I_{H^2(\D)} - M_{\psi} M_{\psi}^*) = I_{H^2(\D)} - M_{\vp} M_{\vp}^* - M_{\psi} M_{\psi}^* + M_{\vp} M_{\vp}^*  M_{\psi} M_{\psi}^*.
\]
Therefore, the above product is zero if and only if $[M_{\vp} M_{\vp}^*, M_{\psi} M_{\psi}^*]=0$. From \cite[Theorem 2.2]{DPS}, this happens only when $\vp$ divides $\psi$, or $\psi$ divides $\vp$. Thus, we get,
\[
M_{\psi} M_{\psi}^* M_{\vp} M_{\vp}^*  = M_{\psi} M_{\psi}^* \quad \text{or} \quad    M_{\vp} M_{\vp}^* M_{\psi} M_{\psi}^* = M_{\vp} M_{\vp}^*, 
\]
respectively.  Therefore, the above product becomes either,
\[
I_{H^2(\D)} - M_{\vp} M_{\vp}^* \quad \text{or} \quad I_{H^2(\D)} - M_{\psi} M_{\psi}^*.
\]
Thus, $(I_{H^2(\D)} - M_{\vp} M_{\vp}^*) (I_{H^2(\D)} - M_{\psi} M_{\psi}^*) = 0$, will imply that either $I_{H^2(\D)} - M_{\vp} M_{\vp}^* =0$  or $ I_{H^2(\D)} - M_{\psi} M_{\psi}^* = 0$. Since $\vp$ and $\psi$ are non-constant inner functions, this cannot happen. This completes the proof.
\end{proof}
We are now ready to prove the main result of this section. 
\begin{thm}\label{main2}
Let $\cls_{\Phi}$ and $\cls_{\Psi}$ be the following submodules of $H^2(\D^n)$
\[
\cls_{\Phi} = \sum_{i=1}^n \vp_i(z_i) H^2(\D^n); \quad \cls_{\Psi} = \sum_{i=1}^n \psi_i(z_i) H^2(\D^n).
\]
corresponding to non-constant inner functions $\{\vp_i(z_i)\}_{i=1}^n$ and $\{\psi_i(z_i)\}_{i=1}^n$. Then $[P_{\cls_{\Phi}}, P_{\cls_{\Psi}}]=0$ if and only if either $\vp_i|\psi_i$ or $\psi_i|\vp_i$ for all $i\in \{1,\dots, n\}$.
\end{thm}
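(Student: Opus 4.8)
The plan is to prove both directions, with the reverse implication being routine and the forward implication carrying all of the difficulty.

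For the reverse direction, suppose $\vp_i|\psi_i$ or $\psi_i|\vp_i$ for every $i$. In either case the ranges $\vp_i H^2(\D^n)$ and $\psi_i H^2(\D^n)$ are nested, so $[M_{\vp_i}M_{\vp_i}^*,M_{\psi_i}M_{\psi_i}^*]=0$; and when $i\neq j$ the projections $M_{\vp_i}M_{\vp_i}^*$ and $M_{\psi_j}M_{\psi_j}^*$ involve disjoint variables and hence commute automatically. Thus $\{M_{\vp_i}M_{\vp_i}^*\}_i\cup\{M_{\psi_j}M_{\psi_j}^*\}_j$ is a commuting family. Since $P_{\cls_{\Phi}}$ is a sum of products of the $M_{\vp_i}M_{\vp_i}^*$ (and $I$) and $P_{\cls_{\Psi}}$ is a sum of products of the $M_{\psi_j}M_{\psi_j}^*$, the two projections commute.

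For the forward direction, fix $j\in\{1,\ldots,n\}$; it suffices to deduce $\vp_j|\psi_j$ or $\psi_j|\vp_j$. Using the re-arrangement in Lemma \ref{st} (placing the $j$-th factor last) together with the orthogonal decomposition \eqref{structure}, I would write $P_{\cls_{\Phi}}=C_j+(I-C_j)P$ and $P_{\cls_{\Psi}}=D_j+(I-D_j)Q$, where $C_j=M_{\vp_j}M_{\vp_j}^*$ and $D_j=M_{\psi_j}M_{\psi_j}^*$ act only in the $z_j$-variable, while $P=P_{\clm_j^{\Phi}}$, $Q=P_{\clm_j^{\Psi}}$ (with $\clm_j^{\Phi}=\sum_{i\neq j}\vp_i H^2(\D^n)$ and $\clm_j^{\Psi}=\sum_{i\neq j}\psi_i H^2(\D^n)$) act only in the remaining variables; in particular $P,Q$ commute with $C_j,D_j$. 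Expanding the commutator and collecting terms according to whether $P$, $Q$, or $PQ$ appears gives
\[
[P_{\cls_{\Phi}},P_{\cls_{\Psi}}]=[C_j,D_j]-[C_j,D_j]P-[C_j,D_j]Q+(I-C_j)(I-D_j)PQ-(I-D_j)(I-C_j)QP.
\]
Now compress by $M_{\vp_j}^*$ on the left and $M_{\psi_j}$ on the right. Writing $P_{n,j}=M_{\vp_j}^*M_{\psi_j}$ and $\Delta_j=P_{n,j}-P_{n,j}P_{n,j}^*P_{n,j}$, identities \eqref{condn1} and \eqref{condn2} (with a direct check that $M_{\vp_j}^*(I-C_j)(I-D_j)M_{\psi_j}=0$) yield $M_{\vp_j}^*[C_j,D_j]M_{\psi_j}=\Delta_j$ and $M_{\vp_j}^*(I-D_j)(I-C_j)M_{\psi_j}=-\Delta_j$. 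Moving the other-variable projections $P,Q$ freely across $M_{\vp_j},M_{\psi_j}$, the compressed commutator collapses to $\Delta_j(I-Q)(I-P)$.

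The key structural observation is that $(I-P)=\prod_{i\neq j}(I-M_{\vp_i}M_{\vp_i}^*)$ and $(I-Q)=\prod_{i\neq j}(I-M_{\psi_i}M_{\psi_i}^*)$ (the complementary projections onto $\clm_j^{\Phi},\clm_j^{\Psi}$ factor by the tensor structure $\clq_{\Phi_{\Lambda}}=\otimes_\lambda\clq_{\vp_\lambda}$), so regrouping by variable, $(I-Q)(I-P)=\bigotimes_{i\neq j}\big[(I-M_{\psi_i}M_{\psi_i}^*)(I-M_{\vp_i}M_{\vp_i}^*)\big]$, a tensor product whose factors are each nonzero by Lemma \ref{non-zero}. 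Hence $(I-Q)(I-P)\neq 0$, and since $\Delta_j$ lives in the $z_j$-variable alone, the vanishing of the tensor $\Delta_j\otimes(I-Q)(I-P)$ (which follows from $[P_{\cls_{\Phi}},P_{\cls_{\Psi}}]=0$) forces $\Delta_j=0$. Finally, $\Delta_j=0$ says precisely that $M_{\vp_j}^*M_{\psi_j}$ is a partial isometry, equivalently that $(I-M_{\psi_j}M_{\psi_j}^*)M_{\vp_j}M_{\vp_j}^*M_{\psi_j}=0$; this means $\text{ran}(M_{\psi_j}M_{\psi_j}^*)$ is invariant under $M_{\vp_j}M_{\vp_j}^*$, which for a pair of orthogonal projections is equivalent to $[M_{\vp_j}M_{\vp_j}^*,M_{\psi_j}M_{\psi_j}^*]=0$. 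Invoking \cite[Theorem 2.2]{DPS} then gives $\vp_j|\psi_j$ or $\psi_j|\vp_j$, and since $j$ was arbitrary the proof is complete.

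The main obstacle is the forward collapse: one must carefully track the non-commuting pair $C_j,D_j$ (which commute exactly when the sought divisibility holds) and the possibly non-commuting pair $P,Q$ through the compression, so that all mixed terms either vanish (the $(I-C_j)(I-D_j)PQ$ term) or recombine into the single clean factor $\Delta_j(I-Q)(I-P)$. The other delicate point is recognizing $(I-Q)(I-P)$ as a \emph{nonzero} tensor product via Lemma \ref{non-zero}, which is precisely what allows a one-variable conclusion $\Delta_j=0$ to be extracted from the global identity, and which is where the non-constancy of all the inner functions is essential.
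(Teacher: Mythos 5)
Your proof is correct, and while it ends at exactly the paper's key identity, it gets there by a genuinely cleaner route. The paper works with the full $n$-term expansion $P_{\cls_{\Phi,j}}=\sum_{i=1}^n P_{\vp_i,j}$ of Lemma \ref{st} and computes the compression $M_{\vp_{n,j}}^*\big[P_{\cls_{\Phi,j}},P_{\cls_{\Psi,j}}\big]M_{\psi_{n,j}}$ summand by summand, extracting the factor $(P_{n,j}-P_{n,j}P_{n,j}^*P_{n,j})$ through a telescoping computation repeated $n-1$ times (the paper's identities (\ref{eqn1}) and (\ref{eqn2})). You instead use the coarse two-block decomposition $P_{\cls_{\Phi}}=C_j+(I-C_j)P$, $P_{\cls_{\Psi}}=D_j+(I-D_j)Q$ --- the same splitting the paper exploits in Theorem \ref{partial} --- under which the commutator has only five terms, and you correctly track that $P$ and $Q$ need not commute with each other while they do commute with everything living in the $z_j$-variable. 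After compressing by $M_{\vp_j}^*$ and $M_{\psi_j}$, identities (\ref{condn1})--(\ref{condn2}) together with the trivial relation $M_{\vp_j}^*(I-C_j)=0$ collapse everything to $\Delta_j\,(I-Q)(I-P)$ with $\Delta_j=P_{n,j}-P_{n,j}P_{n,j}^*P_{n,j}$, using $I-P-Q+QP=(I-Q)(I-P)$; since $(I-Q)(I-P)=\prod_{i\neq j}(I-M_{\psi_i}M_{\psi_i}^*)(I-M_{\vp_i}M_{\vp_i}^*)$, this is precisely the expression the paper reaches after its page-long computation. From that point the two arguments coincide: the disjoint-variable (tensor) splitting, the non-vanishing of each one-variable factor via Lemma \ref{non-zero} (where non-constancy of the inner functions enters), the conclusion that $P_{n,j}=M_{\vp_j}^*M_{\psi_j}$ is a partial isometry, and the appeal to \cite[Theorem 2.2]{DPS}; your converse direction is also the paper's commuting-family argument. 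What your route buys is brevity and transparency: the cancellation structure is visible at a glance rather than distributed over an inductive term-by-term expansion, and your intermediate reformulation (partial isometry $\Leftrightarrow$ $(I-M_{\psi_j}M_{\psi_j}^*)M_{\vp_j}M_{\vp_j}^*M_{\psi_j}=0$ $\Leftrightarrow$ commutativity of the two inner projections) is a correct and slightly more self-contained bridge to the divisibility conclusion. The paper's finer decomposition buys nothing extra for this theorem, though it keeps the proof in the same explicit form (\ref{proj2}) used in its other computations.
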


\begin{proof}
Let us begin by assuming the condition $[P_{\cls_{\Phi}}, P_{\cls_{\Psi}}]=0$. Note that this condition is equivalent to saying that $[P_{\cls_{\Phi,j}}, P_{\cls_{\Psi,j}}]=0$ for all $j \in \{1,\ldots,n\}$. It is because $\cls_{\Phi}, \cls_{\Psi}$ and $\cls_{\Phi,j}, \cls_{\Psi,j}$ are exactly the same submodules, respectively.  Since $\vp_j$ and $\psi_j$ are inner functions for all $j \in\{1,\ldots,n\}$ they are always non-zero.  For any fixed but arbitrary $j \in \{1,\ldots,n\}$, let us look at the configurations $\cls_{\Phi,j}$ and $\cls_{\Psi,j}$. First, observe that we can consider a decomposition of $\cls_{\Phi}, \cls_{\Psi}$ similar to condition (\ref{adj_eqn}) to get the following relations.
\[
M_{\vp_n,j}^*P_{\cls_{\Phi,j}}=M_{\vp_n,j}^* = M_{\vp_j}^* \quad \text{and}\quad P_{\cls_{\Psi,j}}M_{\psi_n,j}=M_{\psi_n,j} = M_{\psi_j}.
\]
So,
\begin{equation}\label{eqns0}
M_{\vp_n,j}^*P_{\cls_{\Phi,j}} P_{\cls_{\Psi,j}}M_{\psi_n,j} = M_{\vp_n,j}^* M_{\psi_n,j}  = M_{\vp_j}^*M_{\psi_j}.
\end{equation}
We now want to expand $M_{\vp_n,j}^*P_{\cls_{\Phi,j}} P_{\cls_{\Psi,j}}M_{\psi_n,j} $ as sum of operators. Note that
\begin{equation}\label{eqns1}
M_{\vp_n,j}^*P_{\cls_{\Phi,j}} P_{\cls_{\Psi,j}}M_{\psi_n,j} = M_{\vp_n,j}^* P_{\cls_{\Psi,j}} P_{\cls_{\Phi,j}}M_{\psi_n,j}  
= M_{\vp_n,j}^* \big( \sum_{i=1}^n P_{\psi_i,j} \big)P_{\cls_{\Phi},j} M_{\psi_n,j}.
\end{equation}
For simplicity, let us begin by expanding the last term of the right-most expression.  \vspace{-1mm}
\[
\begin{split}
&M_{\vp_{n,j}}^* M_{\psi_n,j} M_{\psi_{n,j}}^* P_{\cls_{\Phi,j}} M_{\psi_{n,j}}\\
&= M_{\vp_{n,j}}^* M_{\psi_{n,j}} M_{\psi_{n,j}}^* \big( \sum_{i=1}^n P_{\vp_i,j} \big) M_{\psi_{n,j}}\\
&= M_{\vp_{n,j}}^* M_{\psi_{n,j}} M_{\psi_{n,j}}^* \big( \sum_{i=1}^{n-1} P_{\vp_i,j} \big) M_{\psi_{n,j}} + M_{\vp_{n,j}}^* M_{\psi_{n,j}} M_{\psi_{n,j}}^* M_{\vp_{n,j}} M_{\vp_{n,j}}^*  M_{\psi_{n,j}}\\
&= M_{\vp_{n,j}}^* M_{\psi_{n,j}} M_{\psi_{n,j}}^* \big( \sum_{i=1}^{n-1} P_{\vp_{i,j}} \big) M_{\psi_{n,j}} + P_{n,j} P_{n,j}^* P_{n,j} \\
&= M_{\vp_{n,j}}^* M_{\psi_{n,j}} M_{\psi_{n,j}}^* \big( \sum_{i=1}^{n-1} M_{\vp_{i,j}}M_{\vp_{i,j}}^* \underset{t>i}{\overset{n}{\Pi}} (I_{H^2(\D^n)} - M_{\vp_{t,j}}M_{\vp_{t,j}}^*) \big) M_{\psi_{n,j}} + P_{n,j} P_{n,j}^* P_{n,j}\\
&= M_{\vp_{n,j}}^* M_{\psi_{n,j}} M_{\psi_{n,j}}^* (I_{H^2(\D^n)} - M_{\vp_{n,j}}M_{\vp_{n,j}}^*)  M_{\psi_{n,j}} \big( \sum_{i=1}^{n-1} M_{\vp_{i,j}}M_{\vp_{i,j}}^* \underset{t>i}{\overset{n-1}{\Pi}} (I_{H^2(\D^n)} - M_{\vp_{t,j}}M_{\vp_{t,j}}^*) \big) \\
&+ P_{n,j} P_{n,j}^* P_{n,j}\\
&= (P_{n,j}  - P_{n,j} P_{n,j}^* P_{n,j}) \big( \sum_{i=1}^{n-1} M_{\vp_{i,j}}M_{\vp_{i,j}}^* \underset{t>i}{\overset{n-1}{\Pi}} (I_{H^2(\D^n)} - M_{\vp_{t,j}}M_{\vp_{t,j}}^*) \big)  + (P_{n,j} P_{n,j}^* P_{n,j}) .
\end{split}
\]
Using the above identity, we get
\[
\begin{split}
&P_{n,j} - (P_{n,j} - P_{n,j} P_{n,j}^* P_{n,j}) \big( \sum_{i=1}^{n-1} M_{\vp_{i,j}}M_{\vp_{i,j}}^* \underset{t>i}{\overset{n-1}{\Pi}} (I_{H^2(\D^n)} - M_{\vp_{t,j}}M_{\vp_{t,j}}^*) \big)  - (P_{n,j} P_{n,j}^* P_{n,j})\\
&= (P_{n,j} -  P_{n,j} P_{n,j}^* P_{n,j}) [I_{H^2(\D^n)} - \big( \sum_{i=1}^{n-1} M_{\vp_{i,j}}M_{\vp_{i,j}}^* \underset{t>i}{\overset{n-1}{\Pi}} (I_{H^2(\D^n)} - M_{\vp_{t,j}}M_{\vp_{t,j}}^*) \big)]\\
&=(P_{n,j} -  P_{n,j} P_{n,j}^* P_{n,j}) \big(  \underset{i=1}{\overset{n-1}{\Pi}} (I_{H^2(\D^n)} - M_{\vp_{i,j}}M_{\vp_{i,j}}^*) \big).
\end{split}
\]
Now, using (\ref{eqns0}) we can deduce that
\begin{multline}\label{eqn1}
M_{\vp_{n,j}}^*P_{\cls_{\Phi,j}}P_{\cls_{\Psi,j}}M_{\psi_{n,j}} - M_{\vp_{n,j}}^*M_{\psi_{n,j}}M_{\psi_{n,j}}^*P_{\cls_{\Phi,j}}M_{\psi_{n,j}}\\
=(P_{n,j} -  P_{n,j} P_{n,j}^* P_{n,j}) \big(  \underset{i=1}{\overset{n-1}{\Pi}} (I_{H^2(\D^n)} - M_{\vp_{i,j}}M_{\vp_{i,j}}^*) \big).
\end{multline}
We will now consider the general term of the right-most expression in (\ref{eqns1}),. In particular, for any $ k \in \{1,\ldots,n-1\}$ we consider the following.
\begin{align*}
&M_{\vp_{n,j}}^* M_{\psi_{k,j}} M_{\psi_{k,j}}^* \big( \underset{l>k}{\overset{n}{\Pi}}  (I_{H^2(\D^n)} - M_{\psi_{l,j}} M_{\psi_{l,j}}^*) \big) P_{\cls_{\Phi,j}} M_{\psi_{n,j}}\\
&=M_{\vp_{n,j}}^* M_{\psi_{k,j}} M_{\psi_{k,j}}^* \big( \underset{l>k}{\overset{n}{\Pi}}  (I_{H^2(\D^n)} - M_{\psi_{l,j}} M_{\psi_{l,j}}^*) \big)   \big( \sum_{i=1}^{n-1} P_{\vp_{i,j}} \big) M_{\psi_{n,j}} \\
&+ M_{\vp_{n,j}}^* M_{\psi_{k,j}} M_{\psi_{k,j}}^* \big( \underset{l>k}{\overset{n}{\Pi}}  (I_{H^2(\D^n)} - M_{\psi_{l,j}} M_{\psi_{l,j}}^*) \big)  M_{\vp_{n,j}} M_{\vp_{n,j}}^* M_{\psi_{n,j}}\\
&=M_{\vp_{n,j}}^* M_{\psi_{k,j}} M_{\psi_{k,j}}^* \big( \underset{l>k}{\overset{n}{\Pi}}  (I_{H^2(\D^n)} - M_{\psi_{l,j}} M_{\psi_{l,j}}^*) \big)   \big( \sum_{i=1}^{n-1} P_{\vp_{i,j}} \big) M_{\psi_{n,j}} \\
&+M_{\psi_{k,j}} M_{\psi_{k,j}}^* \big( \underset{l>k}{\overset{n-1}{\Pi}}  (I_{H^2(\D^n)} - M_{\psi_{l,j}} M_{\psi_{l,j}}^*) \big)  M_{\vp_{n,j}}^* (I_{H^2(\D^n)} - M_{\psi_{n,j}} M_{\psi_{n,j}}^*) M_{\vp_{n,j}} M_{\vp_{n,j}}^* M_{\psi_{n,j}}\\
&= M_{\vp_{n,j}}^* M_{\psi_{k,j}} M_{\psi_{k,j}}^* \big( \underset{l>k}{\overset{n}{\Pi}}  (I_{H^2(\D^n)} - M_{\psi_{l,j}} M_{\psi_{l,j}}^*) \big)   \big( \sum_{i=1}^{n-1} P_{\vp_{i,j}} \big) M_{\psi_{n,j}} \\
&+M_{\psi_{k,j}} M_{\psi_{k,j}}^* \big( \underset{l>k}{\overset{n-1}{\Pi}}  (I_{H^2(\D^n)} - M_{\psi_{l,j}} M_{\psi_{l,j}}^*) \big)  (P_{n,j} - P_{n,j} P_{n,j}^* P_{n,j}).
\end{align*}
As before, let us compute the first term in the above sum separately.
\begin{align*}
&M_{\vp_{n,j}}^* M_{\psi_{k,j}} M_{\psi_{k,j}}^* \big( \underset{l>k}{\overset{n}{\Pi}}  (I_{H^2(\D^n)} - M_{\psi_{l,j}} M_{\psi_{l,j}}^*) \big)   \big( \sum_{i=1}^{n-1} P_{\vp_{i,j}} \big) M_{\psi_{n,j}} \\
&=M_{\psi_{k,j}} M_{\psi_{k,j}}^* \big( \underset{l>k}{\overset{n-1}{\Pi}}  (I_{H^2(\D^n)} - M_{\psi_{l,j}} M_{\psi_{l,j}}^*) \big) M_{\vp_{n,j}}^* (I_{H^2(\D^n)} - M_{\psi_{n,j}} M_{\psi_{n,j}}^*)  \big( \sum_{i=1}^{n-1} P_{\vp_{i,j}} \big) M_{\psi_{n,j}} \\
&= M_{\psi_{k,j}} M_{\psi_{k,j}}^* \big( \underset{l>k}{\overset{n-1}{\Pi}}  (I_{H^2(\D^n)} - M_{\psi_{l,j}} M_{\psi_{l,j}}^*) \big) M_{\vp_{n,j}}^* (I_{H^2(\D^n)} - M_{\psi_{n,j}} M_{\psi_{n,j}}^*)  \\
&\big( \sum_{i=1}^{n-1} M_{\vp_{i,j}}M_{\vp_{i,j}}^* \underset{t>i}{\overset{n}{\Pi}} (I_{H^2(\D^n)} - M_{\vp_{t,j}}M_{\vp_{t,j}}^*) \big)  M_{\psi_{n,j}} \\
&= M_{\psi_{k,j}} M_{\psi_{k,j}}^* \big( \underset{l>k}{\overset{n-1}{\Pi}}  (I_{H^2(\D^n)} - M_{\psi_{l,j}} M_{\psi_{l,j}}^*) \big) \big( \sum_{i=1}^{n-1} M_{\vp_{i,j}}M_{\vp_{i,j}}^* \underset{t>i}{\overset{n-1}{\Pi}} (I_{H^2(\D^n)} - M_{\vp_{t,j}}M_{\vp_{t,j}}^*) \big) M_{\vp_{n,j}}^*  \\
&(I_{H^2(\D^n)} - M_{\psi_{n,j}} M_{\psi_{n,j}}^*) 
(I_{H^2(\D^n)} - M_{\vp_{n,j}}M_{\vp_{n,j}}^*)M_{\psi_{n,j}} \\
&=  - M_{\psi_{k,j}} M_{\psi_{k,j}}^* \big( \underset{l>k}{\overset{n-1}{\Pi}}  (I_{H^2(\D^n)} - M_{\psi_{l,j}} M_{\psi_{l,j}}^*) \big) \big( \sum_{i=1}^{n-1} M_{\vp_{i,j}}M_{\vp_{i,j}}^* \underset{t>i}{\overset{n-1}{\Pi}} (I_{H^2(\D^n)} - M_{\vp_{t,j}}M_{\vp_{t,j}}^*) \big)\\
&(P_{n,j} - P_{n,j} P_{n,j}^* P_{n,j}).
\end{align*}
The important fact in the above computation is that we can extract the factor $(P_{n,j} - P_{n,j} P_{n,j}^* P_{n,j})$. This will be useful for drawing the main conclusion. Using the above expression we can now write the sum as the following.
\[
\begin{split}
&M_{\vp_{n,j}}^* M_{\psi_{k,j}} M_{\psi_{k,j}}^* \big( \underset{l>k}{\overset{n}{\Pi}}  (I_{H^2(\D^n)} - M_{\psi_{l,j}} M_{\psi_{l,j}}^*) \big)   \big( \sum_{i=1}^{n-1} P_{\vp_{i,j}} \big) M_{\psi_{n,j}} \\
&+M_{\psi_{k,j}} M_{\psi_{k,j}}^* \big( \underset{l>k}{\overset{n-1}{\Pi}}  (I_{H^2(\D^n)} - M_{\psi_{l,j}} M_{\psi_{l,j}}^*) \big)  (P_{n,j} - P_{n,j} P_{n,j}^* P_{n,j})\\
&= - M_{\psi_{k,j}} M_{\psi_{k,j}}^* \big( \underset{l>k}{\overset{n-1}{\Pi}}  (I_{H^2(\D^n)} - M_{\psi_{l,j}} M_{\psi_{l,j}}^*) \big) \big( \sum_{i=1}^{n-1} M_{\vp_{i,j}}M_{\vp_{i,j}}^* \underset{t>i}{\overset{n-1}{\Pi}} (I_{H^2(\D^n)} - M_{\vp_{t,j}}M_{\vp_{t,j}}^*) \big)\\
&(P_{n,j} - P_{n,j} P_{n,j}^* P_{n,j})\\ &+M_{\psi_{k,j}} M_{\psi_{k,j}}^* \big( \underset{l>k}{\overset{n-1}{\Pi}}  (I_{H^2(\D^n)} - M_{\psi_{l,j}} M_{\psi_{l,j}}^*) \big)  (P_{n,j} - P_{n,j} P_{n,j}^* P_{n,j})\\
&= (P_{n,j} - P_{n,j} P_{n,j}^* P_{n,j}) M_{\psi_{k,j}} M_{\psi_{k,j}}^* \big( \underset{l>k}{\overset{n-1}{\Pi}}  (I_{H^2(\D^n)} - M_{\psi_{l,j}} M_{\psi_{l,j}}^*) \big) \\
&[I_{H^2(\D^n)} - \big( \sum_{i=1}^{n-1} M_{\vp_{i,j}}M_{\vp_{i,j}}^* \underset{t>i}{\overset{n-1}{\Pi}} (I_{H^2(\D^n)} - M_{\vp_{t,j}}M_{\vp_{t,j}}^*) \big)]\\
&= (P_{n,j} - P_{n,j} P_{n,j}^* P_{n,j})  M_{\psi_{k,j}} M_{\psi_{k,j}}^* \big( \underset{l>k}{\overset{n-1}{\Pi}}  (I_{H^2(\D^n)} - M_{\psi_{l,j}} M_{\psi_{l,j}}^*) \big) \big(\underset{i=1}{\overset{n-1}{\Pi}} (I_{H^2(\D^n)} - M_{\vp_{i,j}}M_{\vp_{i,j}}^*) \big).
\end{split}
\]
This gives a simple expression of the original expression.
\begin{multline}\label{eqn2}
M_{\vp_{n,j}}^* M_{\psi_{k,j}} M_{\psi_{k,j}}^* \big( \underset{l>k}{\overset{n}{\Pi}}  (I_{H^2(\D^n)} - M_{\psi_{l,j}} M_{\psi_{l,j}}^*) \big) P_{\cls_{\Phi,j}} M_{\psi_{n,j}}\\
= (P_{n,j} - P_{n,j} P_{n,j}^* P_{n,j})  M_{\psi_{k,j}} M_{\psi_{k,j}}^* \big( \underset{l>k}{\overset{n-1}{\Pi}}  (I_{H^2(\D^n)} - M_{\psi_{l,j}} M_{\psi_{l,j}}^*) \big) \big(\underset{i=1}{\overset{n-1}{\Pi}} (I_{H^2(\D^n)} - M_{\vp_{i,j}}M_{\vp_{i,j}}^*) \big).
\end{multline}
Our main aim is to now consider the following difference.
\begin{align*}
&M_{\vp_{n,j}}^*P_{\cls_{\Phi,j}}P_{\cls_{\Psi,j}}M_{\psi_{n,j}} -  M_{\vp_{n,j}}^* P_{\cls_{\Psi,j}}P_{\cls_{\Phi,j}} M_{\psi_{n,j}}\\
&=  M_{\vp_{n,j}}^*P_{\cls_{\Phi,j}}P_{\cls_{\Psi,j}}M_{\psi_{n,j}}  - M_{\vp_{n,j}}^* \big( \sum_{k=1}^{n} M_{\psi_{k,j}}M_{\psi_{k,j}}^* \underset{l>k}{\overset{n}{\Pi}} (I_{H^2(\D^n)} - M_{\psi_{l,j}}M_{\psi_{l,j}}^*) P_{\cls_{\Phi,j}} M_{\psi_{n,j}}\\
&= M_{\vp_{n,j}}^*P_{\cls_{\Phi,j}}P_{\cls_{\Psi,j}}M_{\psi_{n,j}} - M_{\vp_{n,j}}^*M_{\psi_{n,j}}M_{\psi_{n,j}}^*P_{\cls_{\Phi,j}}M_{\psi_{n,j}}  \\
&- M_{\vp_{n,j}}^* \big( \sum_{k=1}^{n-1} M_{\psi_{k,j}}M_{\psi_{k,j}}^* \underset{l>k}{\overset{n}{\Pi}} (I_{H^2(\D^n)} - M_{\psi_{l,j}}M_{\psi_{l,j}}^*) P_{\cls_{\Phi,j}} M_{\psi_{n,j}}\\
&=  (P_{n,j} - P_{n,j} P_{n,j}^* P_{n,j}) \big(  \underset{i=1}{\overset{n-1}{\Pi}} (I_{H^2(\D^n)} - M_{\vp_{i,j}}M_{\vp_{i,j}}^*) \big) \\
&- M_{\vp_{n,j}}^* \big( \sum_{k=1}^{n-1} M_{\psi_{k,j}}M_{\psi_{k,j}}^* \underset{l>k}{\overset{n}{\Pi}} (I_{H^2(\D^n)} - M_{\psi_{l,j}}M_{\psi_{l,j}}^*) P_{\cls_{\Phi,j}} M_{\psi_{n,j}}.
\end{align*}
The last equality follows by using the condition (\ref{eqn1}).  Thus, the above difference now turns into the following.
\begin{align*}
&(P_{n,j} - P_{n,j} P_{n,j}^* P_{n,j}) \big(  \underset{i=1}{\overset{n-1}{\Pi}} (I_{H^2(\D^n)} - M_{\vp_{i,j}}M_{\vp_{i,j}}^*) \big) - M_{\vp_{n,j}}^* M_{\psi_{n-1,j}} M_{\psi_{n-1,j}}^* (I_{H^2(\D^n)} - M_{\psi_{n,j}} M_{\psi_{n,j}}^*) M_{\psi_{n,j}} \\
&- M_{\vp_{n,j}}^* \big( \sum_{k=1}^{n-2} M_{\psi_{k,j}}M_{\psi_{k,j}}^* \underset{l>k}{\overset{n}{\Pi}} (I_{H^2(\D^n)} - M_{\psi_{l,j}}M_{\psi_{l,j}}^*) P_{\cls_{\Phi,j}} M_{\psi_{n,j}}\\
&=   (P_{n,j} - P_{n,j} P_{n,j}^* P_{n,j}) \big(  \underset{i=1}{\overset{n-1}{\Pi}} (I_{H^2(\D^n)} - M_{\vp_{i,j}}M_{\vp_{i,j}}^*) \big) \\
&-  (P_{n,j} - P_{n,j} P_{n,j}^* P_{n,j}) M_{\psi_{n-1,j}} M_{\psi_{n-1,j}}^* \big(  \underset{i=1}{\overset{n-1}{\Pi}} (I_{H^2(\D^n)} - M_{\vp_{i,j}}M_{\vp_{i,j}}^*) \big)\\
 &- M_{\vp_{n,j}}^* \big( \sum_{k=1}^{n-2} M_{\psi_{k,j}}M_{\psi_{k,j}}^* \underset{l>k}{\overset{n}{\Pi}} (I_{H^2(\D^n)} - M_{\psi_{l,j}}M_{\psi_{l,j}}^*) P_{\cls_{\Phi,j}} M_{\psi_{n,j}}\\
&=  (P_{n,j} - P_{n,j} P_{n,j}^* P_{n,j}) (I_{H^2(\D^n)} - M_{\psi_{n-1,j}} M_{\psi_{n-1,j}}^*) \big(  \underset{i=1}{\overset{n-1}{\Pi}} (I_{H^2(\D^n)} - M_{\vp_{i,j}}M_{\vp_{i,j}}^*) \big)\\
&- M_{\vp_{n,j}}^* \big( \sum_{k=1}^{n-2} M_{\psi_{k,j}}M_{\psi_{k,j}}^* \underset{l>k}{\overset{n}{\Pi}} (I_{H^2(\D^n)} - M_{\psi_{l,j}}M_{\psi_{l,j}}^*) P_{\cls_{\Phi,j}} M_{\psi_{n,j}}.
\end{align*}
The last but one equality follows by using the condition (\ref{eqn2}). Continuing in this manner  for another $n-2$-times while using (\ref{eqn2}) gives
\begin{align*}
&M_{\vp_{n,j}}^*P_{\cls_{\Phi,j}}P_{\cls_{\Psi,j}}M_{\psi_{n,j}} -  M_{\vp_{n,j}}^* P_{\cls_{\Psi,j}}P_{\cls_{\Phi,j}} M_{\psi_{n,j}}\\
&=(P_{n,j} - P_{n,j} P_{n,j}^* P_{n,j})  \big(  \underset{i=1}{\overset{n-1}{\Pi}}(I_{H^2(\D^n)} - M_{\psi_{i,j}} M_{\psi_{i,j}}^*) \big)  \big(  \underset{i=1}{\overset{n-1}{\Pi}} (I_{H^2(\D^n)} - M_{\vp_{i,j}}M_{\vp_{i,j}}^*) \big)
\end{align*}
By our assumption, we have $[P_{\cls_{\Phi}},P_{\cls_{\Psi}}]=0$, hence, $$M_{\vp_{n,j}}^*P_{\cls_{\Phi,j}}P_{\cls_{\Psi,j}}M_{\psi_{n,j}} = M_{\vp_{n,j}}^* P_{\cls_{\Psi,j}}P_{\cls_{\Phi,j}} M_{\psi_{n,j}}.$$ Therefore, from the above identity we get
\[
(P_{n,j} - P_{n,j} P_{n,j}^* P_{n,j})  \big(  \underset{i=1}{\overset{n-1}{\Pi}}(I_{H^2(\D^n)} - M_{\psi_{i,j}} M_{\psi_{i,j}}^*) \big)  \big(  \underset{i=1}{\overset{n-1}{\Pi}} (I_{H^2(\D^n)} - M_{\vp_{i,j}}M_{\vp_{i,j}}^*) \big) = 0.
\]
Since $ (P_{n,j} - P_{n,j} P_{n,j}^* P_{n,j})$ and $  \big(  \underset{i=1}{\overset{n-1}{\Pi}}(I_{H^2(\D^n)} - M_{\psi_{i,j}} M_{\psi_{i,j}}^*) \big)  \big(  \underset{i=1}{\overset{n-1}{\Pi}} (I_{H^2(\D^n)} - M_{\vp_{i,j}}M_{\vp_{i,j}}^*) \big)$ depend on disjoint set of variables, 
\[
 (P_{n,j} - P_{n,j} P_{n,j}^* P_{n,j})  \big(  \underset{i=1}{\overset{n-1}{\Pi}}(I_{H^2(\D^n)} - M_{\psi_{i,j}} M_{\psi_{i,j}}^*) \big)  \big(  \underset{i=1}{\overset{n-1}{\Pi}} (I_{H^2(\D^n)} - M_{\vp_{i,j}}M_{\vp_{i,j}}^*) \big) = 0
\]
if and only if $ (P_{n,j} - P_{n,j} P_{n,j}^* P_{n,j}) =0$ or $\underset{i=1}{\overset{n-1}{\Pi}}(I_{H^2(\D^n)} - M_{\psi_{i,j}} M_{\psi_{i,j}}^*)   \underset{i=1}{\overset{n-1}{\Pi}} (I_{H^2(\D^n)} - M_{\vp_{i,j}}M_{\vp_{i,j}}^*) =0$, or both. Since $\vp_j$ and $\psi_j$ are non-constant functions, from Lemma \ref{non-zero},  we can conclude that
\[
 \underset{i=1}{\overset{n-1}{\Pi}}(I_{H^2(\D^n)} - M_{\psi_{i,j}} M_{\psi_{i,j}}^*)   (I_{H^2(\D^n)} - M_{\vp_{i,j}}M_{\vp_{i,j}}^*)  = \underset{i=1; i \neq j}{\overset{n}{\Pi}}(I_{H^2(\D^n)} - M_{\psi_i} M_{\psi_i}^*) (I_{H^2(\D^n)} - M_{\vp_i}M_{\vp_i}^*) \neq 0.
\]
Therefore, the only possibility is $P_{n,j} = P_{n,j} P_{n,j}^* P_{n,j}$. In other words, $P_{n,j}  = M_{\vp_{n,j}}^* M_{\psi_{n,j}} = M_{\vp_{j}}^* M_{\psi_{j}}$ is a partial isometry. From \cite[Theorem 2.2]{DPS}, it must follow that either
\[
\vp_j \text{ divides } \psi_j \text{ or } \psi_j \text{ divides } \vp_j.
\]
Since $j$ was an arbitrary element in $\{1,\ldots,n\}$, we get
\[
\vp_j \text{ divides } \psi_j \text{ or } \psi_j \text{ divides } \vp_j,
\]
for all $j \in \{1,\ldots,n\}$. Conversely, suppose the inner functions $\psi_j$ and $\vp_j$ for all $j\in \{1,\dots, n\}$ satisfy either $\vp_j|\psi_j$ or $\psi_j|\vp_j$. This implies that
\[
M_{\vp_j} M_{\vp_j}^* M_{\psi_j} M_{\psi_j}^* \text{ is a projection},
\]
and therefore,
\[
[M_{\vp_j} M_{\vp_j}^*, M_{\psi_i} M_{\psi_i}^*] = 0,
\]
for all $i,j \in \{1,\ldots,n\}$. Using this, we can immediately conclude that,
\begin{align*}
P_{\cls_{\Phi}} P_{\cls_{\Psi}} &= \sum_{l=1}^n P_{\vp_l} \sum_{k=1}^n P_{\vp_k}\\
&=  \big( \sum_{l=1}^{n} M_{\vp_{l}}M_{\vp_{l}}^* \underset{j>l}{\overset{n}{\Pi}} (I_{H^2(\D^n)} - M_{\vp_j}M_{\vp_j}^*) \big) \big( \sum_{k=1}^{n} M_{\psi_{k}}M_{\psi_{k}}^* \underset{i>k}{\overset{n}{\Pi}} (I_{H^2(\D^n)} - M_{\psi_i}M_{\psi_i}^*) \big)\\
&=   \big( \sum_{k=1}^{n} M_{\psi_{k}}M_{\psi_{k}}^* \underset{i>k}{\overset{n}{\Pi}} (I_{H^2(\D^n)} - M_{\psi_i}M_{\psi_i}^*) \big) \big( \sum_{l=1}^{n} M_{\vp_{l}}M_{\vp_{l}}^* \underset{j>l}{\overset{n}{\Pi}} (I_{H^2(\D^n)} - M_{\vp_j}M_{\vp_j}^*) \big)\\
&=  P_{\cls_{\Psi}} P_{\cls_{\Phi}}
\end{align*}
This completes the proof.
\end{proof}
We can use the above characterization to prove the result for submodules $\cls_{\Phi_{\Lambda}}$ and $S_{\cls_{\Psi_{\Gamma}}}$.
\begin{proof}[Proof for Theorem \ref{comm_projn}]
For this proof, let us extend the submodules into the setting of the previous result. Consider
\begin{multline}
\cls_{1} := \sum_{j \in \Lambda \cap \Gamma} \vp_j(z_j) H^2(\D^n) +  \sum_{\lambda \in \Lambda \setminus \Lambda \cap \Gamma} \vp_\lambda(z_\lambda) H^2(\D^n) \\+ \sum_{t \in \Gamma \setminus \Lambda \cap \Gamma} \psi_t(z_t) H^2(\D^n) +  \sum_{k \in \{1,\ldots,n\} \setminus \Lambda \cup \Gamma } z_k H^2(\D^n),
\end{multline}
and similarly,
\begin{multline}
\cls_{2} := \sum_{j \in \Lambda \cap \Gamma} \psi_j(z_j) H^2(\D^n) +  \sum_{\lambda \in \Lambda \setminus \Lambda \cap \Gamma} \vp_\lambda(z_\lambda) H^2(\D^n) \\+ \sum_{t \in \Gamma \setminus \Lambda \cap \Gamma} \psi_t(z_t) H^2(\D^n) +  \sum_{k \in \{1,\ldots,n\} \setminus \Lambda \cup \Gamma } z_k H^2(\D^n).
\end{multline}
From the structures of the above submodules, it is evident that 
\begin{multline}
P_{\cls_1}^{\perp} = \Big( \underset{j \in \Lambda \cap \Gamma}{\Pi} (I_{H^2(\D^n)} - M_{\vp_j} M_{\vp_j}^*)\Big)  \Big( \underset{\lambda \in \Lambda \setminus \Lambda \cap \Gamma}{\Pi} (I_{H^2(\D^n)} - M_{\vp_\lambda} M_{\vp_\lambda}^*)\Big) \\
 \Big( \underset{t \in \Gamma \setminus \Lambda \cap \Gamma}{\Pi} (I_{H^2(\D^n)} - M_{\psi_t} M_{\psi_t}^*)\Big)  \Big( \underset{k \in \{1,\ldots,n\} \setminus \Lambda \cap \Gamma}{\Pi} (I_{H^2(\D^n)} - M_{z_k} M_{z_k}^*)\Big),
\end{multline}
and
\begin{multline}
P_{\cls_2}^{\perp} = \Big( \underset{j \in \Lambda \cap \Gamma}{\Pi} (I_{H^2(\D^n)} - M_{\psi_j} M_{\psi_j}^*)\Big) \Big( \underset{\lambda \in \Lambda \setminus \Lambda \cap \Gamma}{\Pi} (I_{H^2(\D^n)} - M_{\vp_\lambda} M_{\vp_\lambda}^*)\Big) \\
 \Big( \underset{t \in \Gamma \setminus \Lambda \cap \Gamma}{\Pi} (I_{H^2(\D^n)} - M_{\psi_t} M_{\psi_t}^*)\Big) \Big( \underset{k \in \{1,\ldots,n\} \setminus \Lambda \cap \Gamma}{\Pi} (I_{H^2(\D^n)} - M_{z_k} M_{z_k}^*)\Big),
\end{multline}
It is clear that $[P_{\cls_1}, P_{\cls_2}]=0$ if and only if $[P_{\cls_1}^{\perp}, P_{\cls_2}^{\perp}]=0$. Now,
\begin{multline}
[P_{\cls_1}^{\perp}, P_{\cls_2}^{\perp}] = \big( [ \underset{j \in \Lambda \cap \Gamma}{\Pi} (I_{H^2(\D^n)} - M_{\vp_j} M_{\vp_j}^*), \underset{j \in \Lambda \cap \Gamma}{\Pi} (I_{H^2(\D^n)} - M_{\psi_j} M_{\psi_j}^*)] \Big)\\  \Big( \underset{\lambda \in \Lambda \setminus \Lambda \cap \Gamma}{\Pi} (I_{H^2(\D^n)} - M_{\vp_\lambda} M_{\vp_\lambda}^*)\Big) 
 \Big( \underset{t \in \Gamma \setminus \Lambda \cap \Gamma}{\Pi} (I_{H^2(\D^n)} - M_{\psi_t} M_{\psi_t}^*)\Big) \Big( \underset{k \in \{1,\ldots,n\} \setminus \Lambda \cap \Gamma}{\Pi} (I_{H^2(\D^n)} - M_{z_k} M_{z_k}^*)\Big).
\end{multline}
The terms inside the different parentheses depend on a disjoint set of variables also, the terms inside all but the first bracket cannot be zero. Hence, $[P_{\cls_1}^{\perp}, P_{\cls_2}^{\perp}]=0$ if and only if $[ \underset{j \in \Lambda \cap \Gamma}{\Pi} (I_{H^2(\D^n)} - M_{\vp_j} M_{\vp_j}^*), \underset{j \in \Lambda \cap \Gamma}{\Pi} (I_{H^2(\D^n)} - M_{\psi_j} M_{\psi_j}^*)]=0$.  Now, $\cls_{\Phi_{\Lambda}}, \cls_{\Psi_{\Gamma}}$ are the submodule corresponding to the quotient modules $\underset{j \in \Lambda \cap \Gamma}{\Pi} (I_{H^2(\D^n)} - M_{\vp_j} M_{\vp_j}^*)$ and $\underset{j \in \Lambda \cap \Gamma}{\Pi} (I_{H^2(\D^n)} - M_{\psi_j} M_{\psi_j}^*)$, respectively. Thus, $[P_{\cls_{\Phi_{\Lambda}}},  P_{\cls_{\Psi_{\Gamma}}}]=0$ if and only if $[P_{\cls_1}^{\perp}, P_{\cls_2}^{\perp}]=0$, which is again equivalent to $[P_{\cls_1}, P_{\cls_2}]=0$. From the above Theorem \ref{main2}, it is clear that $P_{\cls_1}$ commutes with $P_{\cls_2}$ if and only if either $\vp_j|\psi_j$ or $\psi_j|\vp_j$ for all $j\in \Lambda \cap \Gamma$. This completes the proof.
\end{proof}

We are now ready to apply the above result to answer Douglas's question on the product of orthogonal projections onto quotient modules (corresponding to the above submodules). Let us first highlight that we need an additional assumption,
\[
\Lambda \cup \Gamma = \{1,\ldots,n\}.
\]
Suppose we do not have this property, and for example consider $\Lambda = \{1\}, \Gamma = \{2\} \subseteq \{1,2,3\}$. Then the quotient modules will be of the following form
\[
\clq_{\Phi_{\Lambda}} = \clq_{\vp_1} \otimes H^2(\D) \otimes H^2(\D); \quad \clq_{\Psi_{\Lambda}} =  H^2(\D) \otimes H^2(\D) \otimes \clq_{\psi_3},
\]
for certain inner functions $\vp_1(z_1) \in H_{z_1}^{\infty}(\D)$ and $\psi_3(z_3) \in H_{z_3}^{\infty}(\D)$. Now even if we consider $\vp_1, \psi_3$ to be finite Blaschke products, the product of projections $P_{\clq_{\Phi_{\Lambda}}} P_{\Psi_{\Gamma}}$ will never be a finite-rank projection.  This justifies the additional assumption.
\begin{proof}[Proof of Theorem \ref{finite-rank}]
Let us begin by noting that
\[
P_{\clq_{\Phi_{\Lambda}}}P_{\clq_{\Psi_{\Gamma}}} = \big(\underset{\lambda \in \Lambda}{\Pi} (I_{H^2(\D^n)} - M_{\vp_\lambda}M_{\vp_\lambda}^*)\big) \big(\underset{t \in \Gamma}{\Pi} (I_{H^2(\D^n)} - M_{\psi_t}M_{\psi_t}^*)\big).
\]
It is a straightforward observation that $P_{\clq_{\Phi_{\Lambda}}} P_{\clq_{\Psi_{\Gamma}}}$ is a projection if and only if $P_{\cls_{\Phi_{\Lambda}}} P_{\cls_{\Psi_{\Gamma}}}$ is a projection, which is further equivalent to the condition that $[P_{\cls_{\Phi_{\Lambda}}}, P_{\cls_{\Psi_{\Gamma}}}] = 0$. From Theorem \ref{comm_projn}, it follows that for each $j \in \Lambda \cap \Gamma$
\[
\text{either } \vp_j \text{ divides } \psi_j \text{ or } \psi_j \text{ divides } \vp_j,
\]
in other words, we get 
\[
(I_{H^2(\D^n)} - M_{\vp_j}M_{\vp_j}^*)  (I_{H^2(\D^n)} - M_{\psi_j}M_{\psi_j}^*) = (I_{H^2(\D^n)} - M_{\vp_j}M_{\vp_j}^*)
\]
or,
\[
(I_{H^2(\D^n)} - M_{\vp_j}M_{\vp_j}^*)  (I_{H^2(\D^n)} - M_{\psi_j}M_{\psi_j}^*) = (I_{H^2(\D^n)} - M_{\psi_j}M_{\psi_j}^*),
\]
respectively.  Let $A:= \{j \in \Lambda \cap \Gamma: \vp_j \text{ divides } \psi_j \}$, and $B:= \{j \in  \Lambda \cap \Gamma: \psi_j \text{ divides } \vp_j \}$. From the above discussion it follows that if $P_{\clq_{\Phi_{\Lambda}}} P_{\clq_{\Psi_{\Gamma}}}$ is a projection, then
\begin{multline}\label{prod}
P_{\clq_{\Phi_{\Lambda}}} P_{\clq_{\Psi_{\Gamma}}} = \underset{i \in A}{{\Pi}} (I_{H^2(\D^n)} - M_{\vp_i}M_{\vp_i}^*) \underset{j \in B}{{\Pi}} (I_{H^2(\D^n)} - M_{\psi_j}M_{\psi_j}^*)\\ \underset{\lambda \in \Lambda \setminus \Lambda \cap \Gamma}{{\Pi}} (I_{H^2(\D^n)} - M_{\vp_\lambda}M_{\vp_\lambda}^*) 
 \underset{t \in \Gamma \setminus \Lambda \cap \Gamma}{{\Pi}} (I_{H^2(\D^n)} - M_{\psi_t}M_{\psi_t}^*).
\end{multline}
Furthermore, $P_{\clq_{\cls_{\Phi_{\Lambda}}}} P_{\clq_{\cls_{\Psi_{\Gamma}}}}$ is finite-rank if and only if the individual components of the product in (\ref{prod}) are finite-rank. It is clear that for any $\lambda \in \Lambda$, the projections $(I_{H^2(\D^n)} - M_{\vp_\lambda}M_{\vp_\lambda}^*)$, or $(I_{H^2(\D^n)} - M_{\psi_\lambda}M_{\psi_\lambda}^*)$ is finite-rank if and only $\vp_\lambda$ is a finite  Blaschke product, or $\psi_\lambda$ is a finite  Blaschke product, respectively. This completes the proof.
\end{proof}

In the case of both $\Lambda = \Gamma = \{1,\ldots,n\}$, and 
\[
\cls_{\Phi} = \sum_{i=1}^n \vp_i(z_i) H^2(\D^n); \quad \cls_{\Psi} = \sum_{j=1}^n \psi_j(z_j) H^2(\D^n)
\]
we have the following result.
\begin{cor}
Let $\cls_{\Phi}$ and $\cls_{\Psi}$ be submodules of $H^2(\D^n)$, and $\clq_{\Phi}, \clq_{\psi}$ be the corresponding quotient modules. Then the following are equivalent
\begin{enumerate}
\item[(i)] $P=P_{\clq_{\Phi}}P_{\clq_{\psi}}$ is a finite rank projection,
\item[(ii)] for all $j \in \{1,\ldots,n\}$ any one of the following conditions hold
\begin{enumerate}
\item[(a)] $\vp_j \text{ divides } \psi_j$ and $\vp_j$ is a finite Blaschke product.
\item[(a)] $\psi_j \text{ divides } \vp_j$ and $\psi_j$ is a finite Blaschke product.
\end{enumerate}
\end{enumerate}
\end{cor}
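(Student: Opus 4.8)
The plan is to obtain this corollary as the specialization of Theorem~\ref{finite-rank} to the case $\Lambda = \Gamma = \{1,\ldots,n\}$; no new analytic input is needed, so the whole argument is a matter of checking that the hypotheses of that theorem are met and that its conclusion collapses to condition (ii). First I would observe that with $\Lambda=\Gamma=\{1,\ldots,n\}$ the standing requirement $\Lambda \cup \Gamma = \{1,\ldots,n\}$ of Theorem~\ref{finite-rank} holds trivially, and that by definition the quotient modules $\clq_{\Phi_{\Lambda}}, \clq_{\Psi_{\Gamma}}$ coincide with $\clq_{\Phi}, \clq_{\psi}$, so that $P=P_{\clq_{\Phi}}P_{\clq_{\psi}}$ is exactly the operator $P_{\clq_{\Phi_{\Lambda}}}P_{\clq_{\Psi_{\Gamma}}}$ to which Theorem~\ref{finite-rank} applies.

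Next I would compute the relevant index sets. Since $\Lambda = \Gamma = \{1,\ldots,n\}$ we have $\Lambda \cap \Gamma = \{1,\ldots,n\}$, and hence $\Lambda \setminus (\Lambda \cap \Gamma) = \Gamma \setminus (\Lambda \cap \Gamma) = \emptyset$. Consequently the first clause in the characterization of Theorem~\ref{finite-rank} — the requirement that the families $\{\vp_i\}_{i \in \Lambda \setminus \Lambda \cap \Gamma}$ and $\{\psi_j\}_{j \in \Gamma \setminus \Lambda \cap \Gamma}$ be finite Blaschke products — is vacuous, as both index sets are empty. The only surviving requirement is the per-coordinate dichotomy over $j \in \Lambda \cap \Gamma = \{1,\ldots,n\}$: for each such $j$, either $\vp_j \mid \psi_j$ with $\vp_j$ a finite Blaschke product, or $\psi_j \mid \vp_j$ with $\psi_j$ a finite Blaschke product. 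This is precisely condition (ii), so Theorem~\ref{finite-rank} yields the equivalence (i)~$\Leftrightarrow$~(ii) at once.

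There is no genuine obstacle in this corollary, since all of the substantive content already lives in Theorem~\ref{finite-rank} (and, through it, in the commutation characterization of Theorem~\ref{comm_projn} together with the nonvanishing statement of Lemma~\ref{non-zero}). The only points demanding care are bookkeeping: verifying that the auxiliary hypothesis $\Lambda \cup \Gamma = \{1,\ldots,n\}$ is automatically satisfied so that Theorem~\ref{finite-rank} may be invoked without any extra assumption, and matching the two identically-labelled sub-conditions of (ii) in the corollary with clauses (a) and (b) of the theorem. If a self-contained argument were preferred, one could alternatively bypass the general theorem and argue directly: $P_{\clq_{\Phi}}P_{\clq_{\psi}} = \big(\prod_{i=1}^{n}(I_{H^2(\D^n)}-M_{\vp_i}M_{\vp_i}^*)\big)\big(\prod_{j=1}^{n}(I_{H^2(\D^n)}-M_{\psi_j}M_{\psi_j}^*)\big)$ is a projection iff $[P_{\cls_{\Phi}},P_{\cls_{\Psi}}]=0$, which by Theorem~\ref{comm_projn} forces the coordinatewise divisibility, after which the product factors into single-variable blocks whose finite-rank-ness is equivalent to the relevant factor being a finite Blaschke product; but invoking Theorem~\ref{finite-rank} directly is the cleanest route.
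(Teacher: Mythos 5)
Your proposal is correct and matches the paper exactly: the paper states this corollary without a separate proof, precisely because it is the immediate specialization of Theorem \ref{finite-rank} to $\Lambda = \Gamma = \{1,\ldots,n\}$, where $\Lambda \cup \Gamma = \{1,\ldots,n\}$ holds trivially and the off-intersection index sets are empty, so only the per-coordinate dichotomy (ii) survives. Your bookkeeping of the index sets and the vacuous clause is exactly the intended argument.
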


\smallskip

\noindent\textsf{Acknowledgement:} The authors would like to thank E.K. Narayanan for his suggestions. The first-named author is supported in part by the NBHM Postdoctoral fellowship 0204/10(21)/2023\\ /R\&D-II/2795. The second-named author is supported by the Department of Science and Technology via the INSPIRE faculty fellowship IFA19-MA141.


\begin{thebibliography}{99}

\bibitem{ACD}
O. Agrawal, D. Clark, and R.G. Douglas, 
\emph{Invariant subspaces in the polydisk.} Pacific J. Math. 121(1986), 1--11.

\bibitem{AC}
P. Ahern and D. Clark, 
\emph{Invariant subspaces and analytic continuation in several variables.} J. Math. Mech. 19 1969/1970 963--969.

\bibitem{Ber}
H. Bercovici, 
\emph{Operator theory and arithmetic in $H^\infty$.} Mathematical Surveys and
Monographs, vol. 26, American Mathematical Society, Providence, RI, 1988.

\bibitem{Bergqvist}
L. Bergqvist,
\emph{Alternative proofs of Mandrekar's theorem.} Proc. Amer. Math. Soc. Ser. B 10 (2023), 46--55.

\bibitem{Beurling} 	
A. Beurling, 
\emph{On two problems concerning linear transformations in Hilbert space.} Acta Math., 81 (1949), 239-255.

\bibitem{BDDS}
M. Bhattacharjee, B. K. Das, R. Debnath and J. Sarkar
\emph{Beurling quotient modules on the polydisc.} Journal of Functional Analysis, Volume 282, Issue 1, 2022, 109258. 

\bibitem{BL}
K. Bickel and C. Liaw, 
\emph{Properties of Beurling-type submodules via Agler decompositions.} J. Funct. Anal.
272 (2017), 83--111.

\bibitem{Douglas}
R.G. Douglas, 
\emph{On majorization, factorization, and range inclusion of operators on
Hilbert space}. Proc. Amer. Math. Soc. 17 (1966), 413--415.

\bibitem{DP}
R. G. Douglas, V. I. Paulsen,
\emph{Hilbert modules over function algebras}. Pitman Research Notes in Mathematics Series, 217. Longman Scientific \& Technical, Harlow; co-published in the United States with John Wiley \& Sons, Inc., New York, 1989. vi+130 pp. ISBN: 0-582-04796-X 

\bibitem{DY}
R. G. Douglas, R. Yang,
\emph{Operator theory in the Hardy space over the bidisk. I}. Integral Equations
Operator Theory, 38(2000), 207--221.

\bibitem{DPS}
R. Debnath, D. K. Pradhan, J. Sarkar,
\emph{Pairs of inner projections and two applications}. Journal of Functional Analysis,
Volume 286, Issue 2, 2024.

\bibitem{Guo}
K. Guo, 
\emph{Equivalence of Hardy submodules generated by polynomials}. J. Funct. Anal. 178 (2000), 343--371.

\bibitem{GW}
K. Guo and K. Wang, 
\emph{Beurling type quotient modules over the bidisk and boundary representations}. J. Funct. Anal. 257 (2009) 3218–3238.

\bibitem{GGK}
K. Guo, H. Huang and K. Wang, 
\emph{Retracts in polydisk and analytic varieties with the $H^\infty$-extension property}. J. Geom. Anal. 18 (2008), 148–171.

\bibitem{INS}
K. J. Izuchi, T. Nakazi, and M. Seto, 
\emph{ Backward shift invariant subspaces in the bidisc II.} J.
Oper. Theory 51 (2004), 361--376.

\bibitem{III}
K. J. Izuchi, K. H. Izuchi and Y. Izuchi, 
\emph{Blaschke products and the rank of backward shift invariant
subspaces over the bidisk.} J. Funct. Anal. 261 (2011), no. 6, 1457--1468.

\bibitem{III2}
K. J. Izuchi, K. H. Izuchi and Y. Izuchi, 
\emph{Ranks of invariant subspaces of the Hardy space over the bidisk.}
J. Reine Angew. Math. 659 (2011), 101--139.

\bibitem{LYY}
Y. Lu, Y. Yang and R. Yang,
\emph{An index formula for the two variable Jordan block.} Proceedings of the American Mathematical Society 139, no. 2 (2011), 511--20. 

\bibitem{Mandrekar}
V. Mandrekar, 
\emph{The validity of Beurling theorems in polydiscs}. Proc. Amer. Math. Soc. 103 (1988), 145--
148

\bibitem{QY}
Y. Qin, R. Yang, 
\emph{A characterization of submodules via the Beurling-Lax-Halmos theorem}.(English summary)
Proc. Amer. Math. Soc.142(2014), no.10, 3505--3510.

\bibitem{Rud}
W. Rudin, 
\emph{Function theory in polydiscs.} Benjamin, New York, 1969.

\bibitem{Sarason}
D. Sarason, 
\emph{Generalized interpolation in $H^\infty$.} Trans. Amer. Math. Soc. 127 (1967), 179–203.

\bibitem{NF}
B. Sz.-Nagy and C. Foias, 
\emph{Harmonic Analysis of Operators on Hilbert Space}, North-Holland Publishing Co., Amsterdam–London; American Elsevier Publishing Co., Inc., New York; Akadémiai Kiadó, Budapest, 1970.

\bibitem{SSW}
J. Sarkar, A. Sasane and B. D. Wick, 
\emph{Doubly commuting submodules of the Hardy module over polydiscs.}
Studia Math. 217 (2013), 179--192.

\bibitem{Sarkar}
J. Sarkar,
\emph{Jordan Blocks of $H^2(\D^n)$}. Journal of Operator Theory, 72 (2014) 371--- 385.

\bibitem{Sarkar2}
J. Sarkar, 
\emph{Submodules of the Hardy module over polydisc}. Israel Journal of Mathematics, 205 (2015), 317--336. 

\bibitem{Yang2}
R. Yang, 
\emph{Operator theory in the Hardy module over the bidisc (II).} Integr. Equ. and Oper.
Theory 42 (2002), 99--124.

 
\bibitem{Yang3}
R. Yang, 
\emph{Hilbert-Schmidt submodules and issues of unitary equivalence.} J. Operator Theory 53 (2005), no. 1, 169--184.


\bibitem{Yang1}
R. Yang, 
\emph{A brief survey of operator theory in $H^2(\D^2)$}. Handbook of analytic operator theory, 223--258. CRC Press/Chapman Hall Handb. Math. Ser. CRC Press, Boca Raton, FL, 2019.


\end{thebibliography}
\end{document}